\documentclass[a4paper,11pt,british]{article}
\usepackage{setspace,graphicx,
epstopdf,amsmath,amsfonts,amsgen, mathtools,
amstext,amsthm,amsbsy,amsopn,amssymb,
bbm,tikz, bbm,
parskip,verbatim,mathrsfs,enumerate,
xcolor,comment}  
\usepackage[utf8]{inputenc}   
\usepackage[top=2.5cm, bottom=3cm, left=2.0cm, right=2.0 cm]{geometry}
\usepackage[square,numbers]{natbib} 
\usepackage{todonotes}
\usepackage[pdfborder={0 0 0}]{hyperref}

\def\b0{\boldsymbol{0}}



\newcommand{\R}     {\mathbb{R}} 
\newcommand{\Z}     {\mathbb{Z}} 
 
\renewcommand{\P}   {\mathbb{P}}

\renewcommand{\S}     {\mathbb{S}}


\newcommand{\Hcal}   {{\mathcal H }}

\newcommand{\Vcal}   {{\mathcal V }} 
\newcommand{\Wcal}   {{\mathcal W }}

\newcommand{\Exp}{\mathscr{E}\kern-0.2mm{\operatorname{xp}}}
\newcommand{\Log}{\mathscr{L}\kern-0.2mm{\operatorname{og}}}

\def\1{{\mathchoice {1\mskip-4mu\mathrm l}      
{1\mskip-4mu\mathrm l} 
{1\mskip-4.5mu\mathrm l} {1\mskip-5mu\mathrm l}}}


\usepackage{mathtools}

\numberwithin{equation}{section}
\numberwithin{figure}{section}
\newtheoremstyle{plain}
  {6pt}
  {4pt}
  {\slshape}
  {}
  {\bfseries}
  {.}
  {0.5em}
  {}%
\newtheorem{thm}{\protect\theoremname}
  \newtheorem{defn}[thm]{\protect\definitionname}
  
  \newtheorem{prop}[thm]{\protect\propositionname}
  \newtheorem{rem}[thm]{\protect\remarkname}
  
  \newtheorem{lem}[thm]{\protect\lemmaname}
  \numberwithin{thm}{section}

\usepackage[english]{babel}
  \addto\captionsbritish{\renewcommand{\corollaryname}{Corollary}}
  \addto\captionsbritish{\renewcommand{\definitionname}{Definition}}
  \addto\captionsbritish{\renewcommand{\factname}{Fact}}
  \addto\captionsbritish{\renewcommand{\propositionname}{Proposition}}
  \addto\captionsbritish{\renewcommand{\remarkname}{Remark}}
  \addto\captionsbritish{\renewcommand{\theoremname}{Theorem}}
  \addto\captionsenglish{\renewcommand{\corollaryname}{Corollary}}
  \addto\captionsenglish{\renewcommand{\definitionname}{Definition}}
  \addto\captionsenglish{\renewcommand{\factname}{Fact}}
  \addto\captionsenglish{\renewcommand{\propositionname}{Proposition}}
  \addto\captionsenglish{\renewcommand{\remarkname}{Remark}}
  \addto\captionsenglish{\renewcommand{\theoremname}{Theorem}}
  \providecommand{\corollaryname}{Corollary}
  \providecommand{\definitionname}{Definition}
  \providecommand{\factname}{Fact}
  \providecommand{\propositionname}{Proposition}
  \providecommand{\remarkname}{Remark}
\providecommand{\theoremname}{Theorem}
\providecommand{\lemmaname}{Lemma}

\usepackage{authblk}
\title{Mermin--Wagner theorem \\ for dimers, monomer double-dimers, \\ and spatial random permutations}
\author[1]{Lorenzo Taggi}
\author[2]{Wei Wu}
\affil[1]{\small{Sapienza Universit\`a di Roma, Dipartimento di Matematica, Roma, Italy.}}
\affil[2]{\small{NYU Shanghai,  Mathematics Department and NYU-ECNU Math Institute, Shanghai, China.}}
\date{\today}

\begin{document}

\maketitle

\begin{abstract}
We study a generalisation of the double-dimer model that encompasses several models of interest, including the monomer double-dimer model, spatial random permutations, the dimer model, and the spin $O(N)$ model, and which is also related to the loop $O(N)$ model. 
We show that on two-dimensional-like graphs (such as slabs), both the correlation function and the probability that a loop visits two vertices decay to zero as the distance between the vertices diverges. 
Our approach is based on the introduction of a new complex spin representation for all models in this class, together with a new proof of the Mermin--Wagner theorem that does not require positivity of the Gibbs measure. 
Even for the well-studied dimer and double-dimer models our results are new: since they do not rely on exact solvability or Kasteleyn’s theorem, they apply beyond the planar-graph setting. 
\end{abstract}

\section{Introduction}

We study a general class of statistical mechanics models, naturally described in terms of random subgraphs or collections of interacting random walks on two-dimensional (not necessarily planar) graphs. This framework encompasses three paradigmatic examples: the \textit{dimer model,}  \textit{spatial random permutations,} and the \textit{monomer double-dimer model}. 
The main result is a \textit{Mermin–Wagner theorem} for the whole class, from which the decay of correlations and absence of long-range order or of macroscopic loops on two-dimensional graphs follow,  and which is new for all these models.

\subsection{Main results}
In what follows, we provide a brief description of these models and discuss the novelty of our results in each case.

\paragraph{The dimer model.}
The dimer model is the study of the set of perfect matchings of a graph.
It has attracted interest from a wide range of perspectives, including probability, combinatorics, statistical mechanics, and graph theory.
Configurations of the model are subgraphs of a fixed (bipartite) graph in which every vertex has degree exactly one, and they are sampled uniformly at random.

A major breakthrough in the rigorous study of such models was achieved through the seminal works of Kasteleyn \cite{Kasteleyn}, Temperley and Fisher \cite{Temperley}, who established a theorem (now known as Kasteleyn’s theorem) that allows an exact computation of the number of dimer configurations on planar graphs.

Understanding the properties of the dimer model \textit{beyond the planar setting} is significantly more challenging and of considerable interest.
In this case the model is no longer integrable and is even computationally intractable \cite{Jerrum}.
Nevertheless, important progress has recently been made in the study of the dimer problem on $\mathbb{Z}^d$ for $d \geq 3$ \cite{Chandgotia, KenyonWolfram, QuitmannTaggi, QuitmannTaggi2, T} and on weakly planar graphs \cite{GiulianiToninelli}.

In this work, we investigate the dimer model on two-dimensional not necessarily planar graphs 
and address the fundamental problem of correlation decay. 
More precisely, we study \emph{monomer--monomer correlations}, 
a central quantity in the rigorous analysis of the model \cite{FisherStephenson1963}, 
and establish a \emph{Mermin--Wagner theorem}, 
which implies decay of correlations and, consequently, 
the absence of long-range order---in sharp contrast with the behavior in dimensions $d>2$ \cite{T}.
Our main result for the dimer model is stated in Theorem \ref{thm:maintheoremdimer} below. 
In the planar setting the decay of monomer-correlations was conjectured in \cite{FisherStephenson1963} and established in \cite{Dub} using Kasteleyn's theorem and the study of Cauchy-Riemann operators.

\paragraph{The monomer double-dimer model.}
 \begin{figure}[t]
\centering
\includegraphics[scale=0.50]{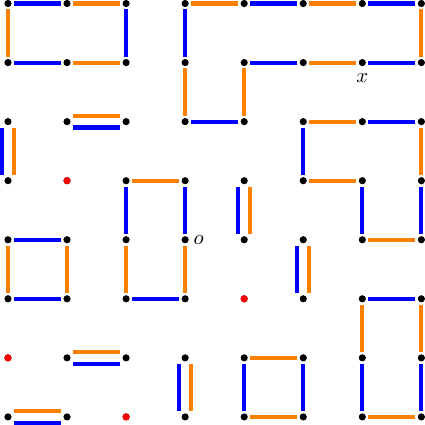}
\hspace{1cm}
\includegraphics[scale=0.50]{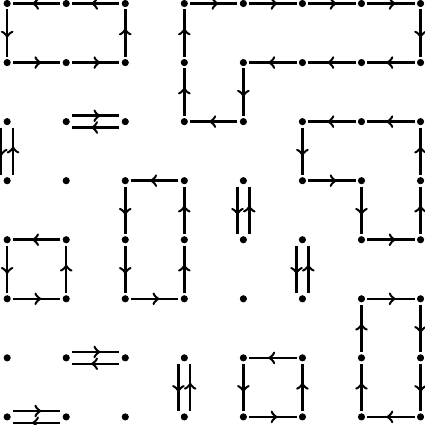}
\hspace{1cm}
\includegraphics[scale=0.41]{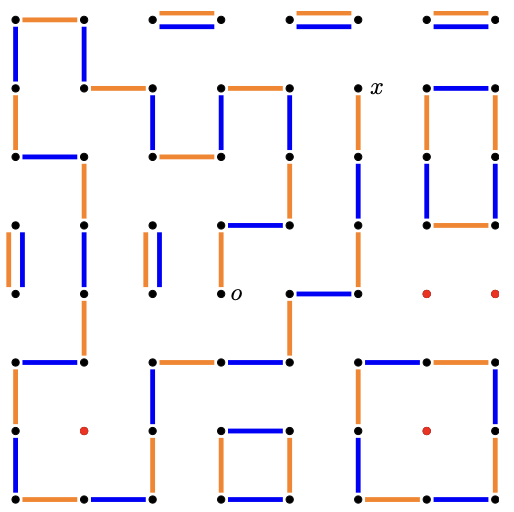}
\caption{
{\textit{Left:}} A configuration of the monomer double-dimer model in a box of $\mathbb{Z}^2$.  
{\textit{Center:}} A configuration of the spatial permutation model in a box of $\mathbb{Z}^2$, where fixed points of the permutation are represented by isolated vertices. This configuration corresponds to the one on the left.  
{\textit{Right:}} A configuration of the monomer double-dimer model in $\Omega_{o,x}$ when $x$ and $o$ have odd distance. Since $o$ and $x$ are the only vertices incident to exactly one dimer, while all other vertices are incident to either none or two dimers, the existence of a self-avoiding path with $o$ and $x$ as endpoints follows.
}
\label{fig:monomerdoubledimer}
\end{figure}
The \emph{double-dimer model} arises by superimposing two independent dimer configurations on the same graph.
The resulting random structure consists of loops and doubled edges, and provides a natural extension of the classical dimer model.
Beyond its combinatorial appeal, the model has attracted considerable attention due to its deep connections with
conformal invariance and loop ensembles.
In particular, it is conjectured that   that loops in the planar double-dimer model converge in the scaling limit to
Conformal Loop Ensembles $\mathrm{CLE}(4)$,
Kenyon proved that the scaling limit is conformal invariant  \cite{Kenyondoubledimer},
while Dub\'edat established relations to isomonodromic deformations and conformal field theory \cite{Dubedat}.
These results place the double-dimer model at the crossroads of probability, statistical mechanics, and complex analysis,
and motivate the study of its behavior beyond the planar setting \cite{GiulianiToninelli,QuitmannTaggi2}.

On planar graphs, the double-dimer model is integrable and therefore amenable to precise analysis. The picture changes dramatically once monomers are allowed. In the \emph{monomer double-dimer model}, one samples a random subset of vertices (``monomers'') that are removed together with all incident edges, and then a double-dimer configuration on the induced complementary subgraph (see Figure~\ref{fig:monomerdoubledimer}, left). A parameter---the \emph{monomer activity}---controls the monomer density. In the presence of monomers, Kasteleyn's theorem~\cite{Kasteleyn} no longer applies on planar or nonplanar graphs; the classical Kasteleyn framework only covers the special case when all monomer insertions lie on the boundary (see, e.g.,~\cite{BLQ}) and does not treat the general monomer double-dimer setting. Consequently, this model remains poorly understood and presents a challenging and compelling direction for future research.

Our main result, Theorem \ref{thm:maintheoremddmodel},  establishes the \emph{absence of long-range order} in the monomer double-dimer model
on two-dimensional graphs, for every value of the monomer activity.
More precisely, we prove decay to zero of the two-point function and the absence of macroscopic loops,
namely loops whose size is proportional to the size of the box. 
This stands in sharp contrast with the behavior in dimension $d>2$, where long-range order is known to occur and macroscopic loops are present with uniformly positive probability \cite{T,QuitmannTaggi}.
Remarkably, our result is new even for the well-studied double-dimer model,
that is, the special case of zero monomer activity,
since it applies to non-planar graphs where integrability is no longer available.

\paragraph{Spatial random permutations.}
Another object of central interest is that of \emph{spatial random permutations} on a lattice.
The configuration space is given by the set of permutations of the vertices of a finite box,
with the restriction that each vertex is mapped either to itself or to one of its nearest neighbours.
Each configuration can be represented as a collection of mutually disjoint directed self-avoiding cycles of even length,
together with vertices that are mapped to themselves (the fixed points of the permutation),
see Figure~\ref{fig:monomerdoubledimer} for an illustration.
There is a bijection between configurations of spatial random permutations on a lattice and those of the monomer double-dimer model; 
the article is written for the latter, but the results apply naturally to spatial permutations as well thanks to this correspondence 
(see Figure~\ref{fig:monomerdoubledimer}).

Random permutations are classical objects of study in probability theory and combinatorics.
The spatial version has been proposed as a toy model for the Bose gas
\cite{BetzUeltschi2008, BetzUeltschi2011, DicksonVogel2024, 
Grosskinsky2013,
Suto2009, Ueltschi2012}.
In these systems loops interact through a potential depending on their total number and of their length. 
Such interactions make the model both interesting and challenging; importantly, however,
they do not depend on the mutual spatial displacement of the loops, in contrast with the models studied here.
The presence of spatial interactions, arising from the fact that loops are self-avoiding and mutually disjoint,
makes the analysis of the model considerably more challenging and conceptually closer to that of the Bose gas.

The study of spatial random permutations in two dimensions has previously been addressed numerically in
\cite{Betz2014}, where a variant of the model was studied:
all permutations of the vertices of the box  are allowed, but the measure is weighted with Gaussian factors
penalizing jumps according to their length, thus effectively allowing only jumps of finite size,
independently of the size of the box,  just as in the  version considered here.
Based on numerical evidence, the author conjectures the existence of a Kosterlitz--Thouless transition,
and hence the absence of long-range order and of macroscopic cycles at any temperature.

In this article we present the first rigorous study of spatial random permutations on two-dimensional graphs,
we prove a Mermin--Wagner theorem for this model,  and deduce the absence of long-range order and of macroscopic cycles at any temperature. 
This is in sharp contrast with the situation in dimension $d>2$, where long-range order
has been proved to occur at sufficiently low temperature \cite{T,QuitmannTaggi2}
and where the existence of macroscopic cycles has been established in the special case of fully packed loops, that is, when fixed points of the permutation are suppressed.

\paragraph{Generalisations of our results.}
Our results extend to more general frameworks of random loops. 

A first generalisation consists in considering a model with dimers of two types, say red and blue, 
where the number of blue dimers incident to each vertex equals the number of red dimers. 
Unlike in the monomer double-dimer model, this number is not restricted to $0$ or $1$, 
but can be any non-negative integer. 
By introducing additional ``matching'' variables at each site, which match every blue dimer with a corresponding red dimer,
configurations can once again be viewed as collections of interacting loops. 
We refer to this  model as \textit{multi-occupancy double-dimer model.}
All our results extend to this more general setting, and the article is in fact written in this framework.

A further generalisation covered by our results allows not just two dimer types but \(2N\) types (labelled \(1,\ldots,2N\)) with \(N\in\mathbb{N}\). For each odd \(i\in\{1,3,\ldots,2N-1\}\) we impose the local constraint that, at every vertex, the number of incident dimers of type \(i\) equals the number of incident dimers of type \(i+1\). Introducing auxiliary pairing variables that match type \(i\) only with type \(i+1\), any configuration decomposes into a collection of loops, each alternating between colours \(i\) and \(i+1\) for some odd \(i\). 
This framework also encompasses a model closely related to the loop \(O(N)\) model (for even \(N\)), which has been studied primarily on the hexagonal lattice (see, e.g., \cite{Loop1,PeledSpinka}). The only (minor) distinction is that in the hexagonal loop \(O(N)\) model loops of length two are excluded.
Our techniques are complementary to those developed in \cite{Loop1}: 
while the latter provide lower bounds on the probability of long loops, 
our methods yield upper bounds. 
Moreover, unlike \cite{Loop1}, our approach applies only to integer values of $N$, 
but it works for any value of the inverse temperature parameter 
and extends beyond the hexagonal lattice, as long as the graph is symmetric enough 
to allow for reflection positivity (for instance, periodic boundary conditions are required). 
We address these generalisations in Section~\ref{sect:generalN}.

 \subsection{Discussion}
\paragraph{Mermin--Wagner theorem and spin representation.}
A key ingredient of our proofs is a spin representation that relates connection probabilities
and monomer correlations in the monomer double-dimer model to spin--spin correlations.
Our representation exhibits non-trivial complex dualities between the two spin components (see Remark~\ref{remark} below),
which on the one hand ensure reflection positivity of the measure and on the other hand make the model invariant under global spin rotations. 
This framework further allows one to perform a spin-wave approximation and to heuristically study
dimer correlations in terms of a Gaussian free field or a Coulomb gas.

The main obstacle to turning such heuristics into rigorous results is that,
since the measure is complex and nonpositive,
the classical strategies based on spin-wave deformations
\cite{DobrushinSchlosman,GagnebinandVelenik, MerminWagner,McBryanSpencer}
cannot be employed. Nor can one directly apply the earlier approach of~\cite{Mermin},
which also relies on positivity assumptions. 

To put this difficulty in perspective, recall that there are essentially two 
established approaches to the proof of the Mermin--Wagner theorem in the literature.  
One approach, due to Pfister \cite{Pfister} (see also the exposition in~\cite{PeledSpinka} and \cite{Velenik}), is based on entropy estimates and 
relies crucially on the positivity of the Gibbs measure (see also,  \cite{PeledMilos} for an extension to degenerately positive Gibbs measures ).
Alternatively, the original argument of Mermin and Wagner (see~\cite{MerminWagner}) is built upon the Bogoliubov functional 
inequality. This method has been extended in several directions, e.g., to quantum spin 
systems~\cite{BenassiFrohlichUeltschi2017} and to certain 
supersymmetric spin systems that admit a positive Gibbs measure representation~\cite{BHS}. 
In the latter case, the spins take values in a hyperbolic space, which can be mapped to a positive Gibbs measure, 
so that the classical proof can be adapted ((see also, the work of Kozma and Peled~\cite{KP} and Sabot~\cite{Sab} for the polynomial decay for the vertex-reinforced jump process via similar spin representations).
At the core of this line of reasoning lies a 
Bogoliubov-type inequality derived via a scalar or operator Cauchy--Schwarz 
inequality under positivity of the Gibbs (or quantum Gibbs) measure.

In contrast, our contribution is to provide a new proof of a Bogoliubov-type 
inequality,    Theorem \ref{thm:cauchyschwarz},
based on the observation that the Fourier transform of an (almost) 
convex function on the lattice is (almost) nonnegative. 
This perspective does not require positivity of the measure and therefore opens the door to extending 
Bogoliubov-type inequalities (and, consequently, the Mermin--Wagner theorem) 
to more general settings such as reflection-positive measures. 
In this way, our approach both complements and extends the existing frameworks in a novel 
direction.

Our version of the Mermin--Wagner theorem is presented in 
Theorem~\ref{thm:magnetisationbound}.
It states that, at any monomer activity, the magnetisation vanishes
when the external field is taken to zero in a suitable way
in two dimensions.  
A perturbative analysis then shows that vanishing magnetisation implies
that the Cesàro sum of two-point functions is small
(see Proposition~\ref{prop:magnexp}),
thus leading, via the Mermin--Wagner theorem, to the proof of our main results,
Theorems~\ref{thm:maintheoremdimer} and~\ref{thm:maintheoremddmodel}.

\paragraph{Overview on other related models.}
As explained above, one of the central objects of the present work is the 
monomer double-dimer model, that is, a configuration consisting of a set of monomers together with two independent dimer configurations on the complementary set of vertices.
It is therefore natural to compare it with other classical models of statistical mechanics obtained by superimposing monomers and dimers.

A first example is the \textit{monomer--dimer model}, 
defined as the superposition of a set of monomers and a single configuration of dimers on the complementary set. 
Configurations are sampled at random according to a measure depending on a parameter, 
the monomer activity, which controls the density of monomers. 
The model reduces to the pure dimer model when the monomer activity is set to zero.

This model has been studied in several works, starting from~\cite{HeilmannLieb1972}, where a Lee--Yang theorem was obtained for any value of the monomer density. 
A natural consequence of this theorem is that correlations decay exponentially fast at all strictly positive values of the monomer activity \cite{Quitmann2023}
(see also \cite{vandenBerg1999} for a probabilistic proof of this result).
Hence, in contrast to the monomer double-dimer model --- which is conjectured to exhibit a Kosterlitz--Thouless transition in two dimensions and is known to display long-range order at low monomer activity in three dimensions~\cite{QuitmannTaggi2} --- the monomer--dimer model does not exhibit any non-trivial phase transition as the monomer activity varies, in any dimension. 
An interesting open problem for this model is to characterise how the mass gap depends on the monomer activity; see~\cite[Remark 2.2]{Quitmann2023} for a discussion.

Another natural variant is the \emph{double monomer--dimer model}, obtained by superposing two independent samples of the monomer--dimer model on the same graph. Because the monomer locations in the two configurations need not coincide, the superposition is not merely a union of disjoint cycles and monomers. Instead, three types of structures appear: (i) monomers (vertices that carry a monomer in both configurations); (ii) closed cycles with edges alternating between the two samples; and (iii) open alternating paths whose two endpoints are monomers (one from each configuration). To the best of our knowledge, this model has not been analyzed in the literature.  Nevertheless, the model admits a representation within our spin formalism at positive external field, with the field playing the role of the monomer activity (in contrast to the monomer double-dimer model, where this role is played by the temperature of the spin measure). We expect that, in this regime,  no macroscopic  paths occur in any dimension.

\subsection{Definitions   and  main theorems}
In this section we present our main results for the three paradigmatic examples of our general framework.

\subsubsection{Dimer model}
Consider a finite undirected   graph $G = (V,E)$. 
A dimer configuration is a spanning subgraph of $G$ such that every vertex has degree  one. 
We let $\mathcal{D}_G$ be the set of  dimer configurations in $G$
and suppose that $G$ is such that $\mathcal{D}_G$ is non-empty. 
Given a set $A \subset V$, we let $G_A$ be the subgraph of $G$ with vertex set
$V \setminus A$ and with edge set consisting of all the edges  in $E$ which 
do not touch any vertex in $A$.
We let $\mathcal{D}_G(A)$ be the set of dimer configurations in $G_A$. 
We introduce the \textit{monomer-monomer correlation}, namely the ratio between number of dimers covers of  $G_{  \{x,y\} }$ and of $G$.
\begin{equation}\label{eq:monomerdimer}
\forall x, y \in V \quad \mathcal{C}_G(x,y) := \frac{ | \mathcal{D}_G(\{x,y\} )| }{  | \mathcal{D}_G|  }.
\end{equation}

We let $\S_{L,K}$  denote the two-dimensional slab torus, 
which we identify with the set $ 
\big ( ( - \frac{L}{2}, \frac{L}{2}] \cap \mathbb{Z} \big )  \times
\big ( ( - \frac{L}{2}, \frac{L}{2}] \cap \mathbb{Z} \big )
\times 
\big (( - \frac{K}{2}, \frac{K}{2}] \cap \mathbb{Z} \big )$. 
When $K >1$,  we then impose periodic boundary conditions in all three directions,
each vertex has then $6$ neighbours. 
In the special case $K=1$ and $L >1$,
$\S_{L,1}$   corresponds  to the planar torus, 
each vertex has then $4$ neighbours, while  
  $\mathbb{S}_{1,1}$ is  a single point. 
We use $o \in \S_{L,K}$ to denote the origin.
If $G = \mathbb{S}_{L, K}$, we use the subscript $_{L,K}$  rather than
$_{\mathbb{S}_{L, K}}$, in order to simplify the notation, and we use the same convention for the other quantities defined below.

Our  main theorem on the dimer model
provides a upper bound on the Cesaro sum of such a function 
on two-dimensional slabs.
\begin{thm}\label{thm:maintheoremdimer}
There exists $c < \infty$   such that, for any  $L, K \in 2 \mathbb{N} \cup \{1\}$ satisfying $K \leq  \sqrt{\log (L)}$, we have 
\begin{equation}\label{eq:secondtheo}
\sum\limits_{x \in \mathbb{S}_{L, K}} \frac{  \mathcal{C}_{L, K}(o,x) }{|  \mathbb{S}_{L, K} |} \leq c \sqrt{ \frac{K}{\log(L)}} 
\end{equation}
\end{thm}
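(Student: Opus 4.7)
The plan is to recognise $\mathcal{C}_{L,K}(o,x)$ as the $\rho = 0$, $N = 1$ specialisation of the two-point function $\mathcal{G}^1_G(o,x)$ of the monomer double dimer model and then to run the same machinery that gives Theorem \ref{thm:maintheoremddmodel}: a complex spin representation, a weak Cauchy–Schwarz inequality from reflection positivity (Theorem \ref{thm:cauchyschwarz}), and a perturbative magnetisation expansion (Proposition \ref{prop:magnexp}). Theorem \ref{thm:maintheoremdimer} would then follow from the Cesaro-sum bound for $\mathcal{G}^1_{L,K}$ by reading off the dimer case.

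\emph{Step 1: spin representation.} Using the bipartiteness of the graph, I would rewrite the pair $|\mathcal{D}_G(\{o,x\})|/|\mathcal{D}_G|$ by introducing a complex-valued auxiliary spin $S_v$ at each vertex $v$, such that summing over spin configurations reproduces the count of dimer coverings (in the spirit of loop/current expansions, but adapted to the dimer constraint of degree exactly one). In this representation $\mathcal{C}_G(o,x)$ becomes, up to an explicit normalisation, the two-point function $\langle \bar{S}_o S_x\rangle_\mu$ of a signed, rotation-invariant complex Gibbs measure $\mu$ on $\S_{L,K}$. The key structural features to preserve are: (i) the invariance of $\mu$ under global $U(1)$ rotations of the spins, used to define a magnetisation, and (ii) reflection positivity with respect to hyperplanes bisecting the slab torus, needed in Step 2.

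\emph{Step 2: vanishing magnetisation via complex Mermin–Wagner.} Couple a small field $\varepsilon$ to one component of the spins, producing a perturbed measure $\mu_\varepsilon$. Classical spin-wave arguments rely on Cauchy–Schwarz applied to a positive measure, which is unavailable here. Instead I would apply the restricted Cauchy–Schwarz inequality for reflection-positive complex measures (Theorem \ref{thm:cauchyschwarz}) to a spin-wave deformation $\phi_L$ that is harmonic on the slab away from the field sites. Reflection positivity forces the deformation to be chosen among the restricted class of functions for which Theorem \ref{thm:cauchyschwarz} produces useful bounds, so one must verify that a harmonic spin-wave of the standard form (as in \cite{MerminWagner}) lies in this class on $\S_{L,K}$. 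The associated Dirichlet energy on the two-dimensional slab of width $K$ is of order $K/\log L$, which forces the magnetisation to vanish at that rate in the limit $L \to \infty$ with $K \leq \sqrt{\log L}$.

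\emph{Step 3: linear response and conclusion.} By Proposition \ref{prop:magnexp}, the smallness of the magnetisation translates into smallness of the linear response in $\varepsilon$, which is exactly the Cesaro average of the two-point function. Combining this with the identification of Step 1 gives the Cesaro-sum bound for $\mathcal{G}^1_{L,K}(o,x)$; specialising to $\rho = 0$, $N=1$ yields precisely \eqref{eq:secondtheo} with $c$ independent of $L, K$.

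The step I expect to be the main obstacle is Step 1: one must design the complex spin representation so that (a) the constrained character of dimer configurations (exact matchings) is recovered by the Gaussian/contour integrals over the spins even when no monomer weight $\rho$ softens the constraint, and (b) the resulting measure is simultaneously $U(1)$-invariant and reflection positive across the relevant hyperplanes of $\S_{L,K}$. The $\rho \to 0$ limit is delicate because the weight $\rho^{|\mathcal{M}|}$ is precisely what gives the representation its usual smoothness; handling the pure dimer case may require either a limiting argument from $\rho > 0$ or a direct ``hard-core'' variant of the spin representation, and one has to check that reflection positivity is preserved along this limit so that Theorem \ref{thm:cauchyschwarz} remains applicable.
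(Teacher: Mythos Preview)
Your overall scheme --- spin representation, reflection-positive Cauchy--Schwarz (Theorem~\ref{thm:cauchyschwarz}), magnetisation expansion (Proposition~\ref{prop:magnexp}), then specialise to the dimer two-point function --- is exactly the paper's route, so the broad outline is right. Two points, however, need correcting.

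First, the obstacle you flag in Step~1 is not the one the paper faces. The paper does \emph{not} treat the pure dimer case as a $\rho\to 0$ (equivalently $\beta\to\infty$) limit of the monomer double dimer model. Instead it works directly with the weight function $U(n)=\mathbbm{1}_{\{n=1\}}$ at finite $\beta=1$; the spin measure~(\ref{eq:measure}), its reflection positivity (Proposition~\ref{prop:reflectionpos}), and Theorem~\ref{thm:cauchyschwarz} all hold for any admissible $U$, so no limiting argument or ``hard-core variant'' is required, and there is no risk of losing reflection positivity. The genuine subtlety for this $U$ is elsewhere: since $U(0)=0$ the weight function is not \emph{nice}, so the inequality (\ref{eq:twopoint1}) relating $\mathcal{G}^{(1)}$ to the spin two-point function is only available when $o$ and $x$ have different parity. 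The paper handles this by summing only over $x\in\S_{L,K}^o$ (this is the separate claim~(\ref{eq:generalclaim1}) in Theorem~\ref{thm:maintheoremgeneralloop}) and then observing that $\mathcal{C}_{L,K}(o,x)=0$ for $x$ of the same parity as $o$, so nothing is lost.

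Second, your description of Step~2 mischaracterises Theorem~\ref{thm:cauchyschwarz}. It is not a tool one applies to a real-space harmonic spin-wave deformation $\phi_L$ in the Dobrushin--Shlosman or McBryan--Spencer style. It is a Cauchy--Schwarz inequality for the specific bilinear form $a\circ b=\sum_{x}\cos(k\cdot x)\,a_o b_x$ (summed along the coordinate axes) applied to the specific vectors $A_x=\sin(rs_x^1)$ and $B_x=\beta(\partial_{s_x^1}\tilde H-\partial_{s_x^2}\tilde H)$; the $\log L$ comes from summing the resulting inequality over Fourier modes $k$ and using $\sum_k |k|^{-2}\sim\log L$ (Proposition~\ref{prop:magnetisationbound}). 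This is closer in spirit to Mermin's original Fourier-mode argument than to a spatial spin-wave deformation, and the ``restricted class of functions'' is really just this pair $(A,B)$ together with the axis-restricted inner product, for which the convexity/monotonicity of two-point functions (Propositions~\ref{prop:convexity}, \ref{prop:monotonicity}) supplies the needed positivity up to a bounded error.
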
 
The behaviour of the monomer-monomer correlation was investigated in \cite{FisherStephenson1963}
with the aid of a general perturbation theory for Pfaffians.
In this paper it was conjectured the polynomial decay of this quantity with exponent $\frac{1}{2}$  after taking the thermodynamic limit, namely 
\begin{equation}\label{eq:conjecture}
\lim\limits_{L \rightarrow \infty} \mathcal{C}_{L, 1}(o,x) \sim |x|^{- \frac{1}{2}}
\end{equation}
in the limit of large $|x|$.
This conjecture, known as the \textit{Fisher-Stephenson conjecture}, was later solved by Dub\'edat (see \cite[Section 8]{Dub}).
See also, for example, 
\cite{FendleyMoessnerSondhi2002, HeilmannLieb1972,PriezzhevRuelle2008,
WilkinsPowell2021} for further rigorous and non-rigorous studies of monomer correlations. 

The rigorous results of these works strongly rely on Kasteleyn's theorem,
namely on the integrability property of the model on planar graphs.
Without such integrability features, it is difficult to expect sharp information on the precise decay of the correlation function.
The novelty of our result is that it is possible to obtain non-trivial information on this model in two dimensions—namely,  the decay to zero of the monomer–monomer correlation function on average, and thus the absence of long-range order in two dimensions—without relying on integrability properties.
Contrary to  (\ref{eq:secondtheo})
 the Cesaro sum of monomer-monomer correlations is known to be uniformly positive in the $d$-dimensional torus if $d \geq 3$ \cite{T}.
 
Note that  our theorem states the absence of long-range order still holds for slabs whose thickness may grow with 
$L$.

\subsubsection{Monomer double-dimer model and spatial random permutations}
\label{sect:monomerdoubledimerresults}
The presence of a positive density of monomers destroys planarity and prevents the use of Kasteleyn’s theorem.
As a consequence, the monomer double–dimer model is highly non-trivial and has remained completely unexplored.
Our main theorem provides the first rigorous result for this model in two dimensions: it establishes the  absence of macroscopic loops,   the decay of correlations for any monomer density,  and thus the absence of long-range order. 

The configuration space of the \textit{monomer double-dimer model} is the set 
$$
\Omega :=  \{ \omega= (M, d_1, d_2) \, \, : \, \, M \subset V,   (d_1, d_2) \in \mathcal{D}_{G}(M) \times \mathcal{D}_{G}(M) \}.
$$
We refer to the first element of the triplet $\omega = (M, d_1, d_2) \in \Omega$ as a set of \textit{monomers},
and to the  second and third element as set of \textit{dimers}
(respectively: blue and red dimers).
For example, a configuration of the monomer double-dimer model  with four monomers in the box of $\mathbb{Z}^2$ can be seen on the left of Figure \ref{fig:monomerdoubledimer}.
As one can see from the figure,  any  configuration $(M, d_1, d_2) \in \Omega$ can be viewed as a collection of disjoint self-avoiding loops,
with loops having length at least four and corresponding to an alternation of blue and red dimers 
or having length two and corresponding to the superposition of one blue and one red dimer on the same edge.

We define 
a probability measure on $\Omega$
which assigns to each configuration $\omega = (M, d_1, d_2) \in \Omega$ the weight
\begin{equation}
\label{eq:probddimer}
\forall \omega \in \Omega \quad
\mathbb{P}_{G, \rho}(\omega)  := 
\frac{\rho^{|M|} }{ \mathbb{Z}_{G, \rho} },
\end{equation}
where $\rho \geq 0$ is a parameter, the monomer activity, 
 and $|M|$ denotes the cardinality of the set of monomers $M$,
 and 
 $\mathbb{Z}_{G, \rho}$ is a normalising constant.
We use $\mathbb{E}_{G, \rho}$ to denote the expectation with respect to
(\ref{eq:probddimer}).

In the special case $\rho=0$ the monomers are suppressed almost surely, hence one recovers the \textit{double-dimer model}, corresponding to the superposition of two dimer covers of $G$ drawn independently and uniformly at random  (\cite{Dubedat, Kenyondoubledimer}).

In order to state our main theorem for the monomer double-dimer model we need to introduce two important observables.  The first quantity is the \textit{loop length}.
We define $L_x = {L}_x(\omega)$ the subgraph of $G$ corresponding to the loop of $\omega$ touching the vertex $x \in V$ 
in the configuration $\omega \in \Omega$
and set $L_x(\omega) = \emptyset$ if the configuration $\omega$ has a monomer at $x$ .
We denote by  $|{L}_x| = |{L}_x| (\omega)$  the length of the loop $L_x(\omega)$,  namely the number of vertices which are touched by such a loop, and set $|{L}_x|(\omega) =0$ if $\omega$ has a monomer at $x$. 

The second interesting quantity is the \textit{two-point function}, corresponding to ratio of the total weight of configurations with a walk having two prescribed vertices as end-points and the partition function.  More formally,
if  $x$ and $y$ are two vertices having odd distance, then we define  
$\Omega_{x,y}$ as the set of triplets $\omega = (M, d_1, d_2)$
such that $M \subset V \setminus   \{x,y\}$, $d_1 \in \mathcal{D}_G(  \{x,y\} \cup M )$, and
$d_2 \in \mathcal{D}_G(M)$.
If, instead,  $x$ and $y$ have even distance,
we define 
$\Omega_{x,y}$ as the set of triplets $\omega = (M, d_1, d_2)$
such that $M \subset V \setminus \{x,y\}$, $d_1 \in \mathcal{D}_G(  \{x\}\cup M )$, and
$d_2 \in \mathcal{D}_G(  \{y\} \cup M)$.
As one can see on the right of Figure \ref{fig:monomerdoubledimer}, any  configuration $\omega \in \Omega_{x,y}$ can be viewed as a collection of monomers,  mutually disjoint self-avoiding loops and a self-avoiding walk with $x$ and $y$ as end-point. 
We define the \textit{two-point function} for the monomer double-dimer model 
$$
\mathbb{G}_{G, \rho}(x,y) := 
\frac{\sum\limits_{\omega = (M, d_1, d_2) \in  \Omega_{x,y} } \rho^{ |M|  } }
{\mathbb{Z}_{G, \rho}}
$$

The first claim in our theorem states that the expected loop length  divided by the volume converges to zero in the limit of large slabs.
The second claim   states the absence of macroscopic loops.
The third claim  states the decay to zero of the two-point function and thus the absence of long-range order in the monomer double-dimer model. 
Our results hold for any value of the monomer activity.
\begin{thm}\label{thm:maintheoremddmodel}
Suppose that $\rho  \geq 0$.
There exists $c = c(\rho) \in (0, \infty)$  such that,  for any  $L, K  \in  2 \mathbb{N} \cup \{1\}$  satisfying $K \leq  \sqrt{\log (L)}$, and for any $\varepsilon >0$
\begin{align}\label{eq:firsttheo}
\frac{\mathbb{E}_{L,  K, \rho} \big (  |L_o |   \big ) }{|  \mathbb{S}_{L, K} |} & \leq c \sqrt{ \frac{K}{\log(L)}},\\ 
\label{eq:firsttheo2}
\P_{L,  K,  \rho} \big (  |L_o |  > \varepsilon  | \mathbb{S}_{ L,K  }   | \big )  & \leq c\varepsilon^{-1} \sqrt{ \frac{K}{\log(L)}}, \\
\label{eq:firsttheo3}
\sum\limits_{x \in \mathbb{S}_{ L,K  } } \frac{ \mathbb{G}_{ L, K, \rho  }(o,x) }{|\mathbb{S}_{ L,K  }| }  & \leq c \sqrt{ \frac{K}{\log(L)}}.
\end{align}
\end{thm}

Since $\mathbb{E}_{L,  K, \rho} \big (  |L_o |   \big )  = \sum_{x \in \S_{L,K}} \mathbb{P}_{L, K, \rho} (o \leftrightarrow x)$, 
where $\{x \leftrightarrow y\} \subset \Omega$ is the event that there exists a loop connecting $x$ and $y$,
 our theorem implies that the probability that two distant  vertices
are  visited by the same loop is not uniformly positive,
contrary to what happens in dimension three and higher 
and small monomer activity.

Note that, similarly to Theorem \ref{thm:maintheoremddmodel}, 
the absence of long-range order and of macroscopic loops holds true for slabs whose size can  grow even slightly faster than $O(L^2)$.

Moreover our results also hold for a generalisation of the model in which an additional multiplicative factor 
$N^{ \# \mbox{ \small loops}}$ is added to the measure,  see Section \ref{sect:generalN} below.

\paragraph{Spatial random permutations.}  
The model of spatial random permutations on a graph $G=(V,E)$ is defined as follows.  
Let $\Omega^{per}$ be the set of permutations $\pi$ of $V$ such that, for each $x \in V$,  
\[
d_G(x,\pi(x)) \in \{0,1\},
\]  
where $d_G(x,y)$ denotes the graph distance on $G$.  
We define a probability measure ${P}^{per}_{G,\rho}$ on $\Omega^{per}$ by setting, for each $\pi \in \Omega^{per}$,  
\begin{equation}
\label{eq:measurepermutation}
P_{G, \rho}^{per}(\pi) = \frac{\rho^{|F(\pi)|}}{Z^{per}_{G, \rho}},
\end{equation}  
where  
\[
F(\pi) := \{x \in V : \pi(x) = x\}
\]  
is the set of fixed points of the permutation $\pi$, $\rho \geq 0$ is a parameter, and $Z^{per}_{G, \rho}$ is the normalising constant.  

As illustrated in Figure~\ref{fig:monomerdoubledimer}, there is a one-to-one correspondence between elements of $\Omega$ (i.e., realisations of the monomer double-dimer model) and elements of $\Omega^{per}$.  
Moreover, the measure \eqref{eq:measurepermutation} assigns to each permutation the same weight as \eqref{eq:probddimer} assigns to the corresponding monomer double-dimer realisation.  
Thus,  the two models are equivalent reformulations of each other;  any result stated in this paper for the monomer double-dimer model can be restated for spatial random permutations on a lattice.

\paragraph{Open problems.}
The monomer double-dimer model is expected to undergo a
\textit{Kosterlitz--Thouless phase transition} as the monomer activity varies on two-dimensional graphs.
This means that the connection probability and the two-point function exhibit \textit{exponential decay} for large enough monomer activity (which has been proved in \cite{Betz2}, where  non-trivial estimates of the critical threshold are obtained) and \textit{polynomial decay} for small enough (but possibly positive) monomer activity.  
We may hope that the spin representation may help us to import techniques 
from spin systems in the framework of perfect matchings to make further progress. 
For example, one may hope to apply the recent methods of 
\cite{AizenmanPeled, Lis} or the technique of \cite{FrohlichSpencer}
to show that the connection probability decays \textit{not faster} than polynomially on two-dimensional graphs if the monomer activity is small enough.
Another remarkable open problem is proving the existence of macroscopic loops in dimension three and higher when the monomer activity is strictly positive but sufficiently small, a result which has been proved only at zero monomer activity \cite{QuitmannTaggi2}.
The same discussion applies to spatial random permutations, since the model is equivalent to the monomer double-dimer model.
For a closely related variant of the spatial random permutations considered here, the occurrence of a Kosterlitz--Thouless phase transition in two dimensions has been conjectured and investigated numerically in \cite{Betz2014}.

\subsection{Paper organisation and notation}
In Section  \ref{sect:spinsystem} we introduce the complex spin representation
and state our main theorem for spins.
In Section \ref{sect:randompaths} we introduce the multi occupancy double-dimer model,
discuss some special cases and make a connection between dimers and spins. 
In Section \ref{sect:RPsection} we introduce  the reflection positivity property
and discuss its consequences.
In Section \ref{sect:probabilisticestimates} we present 
some probabilistic estimates,  introduce a central quantity,  \textit{the  magnetisation},
and present a perturbative analysis in the limit of vanishing external field which allows us to lower bound the magnetisation  by the Cesaro sum of two point functions at zero external field. 
Section \ref{sect:CauchySchwarz} 
is the central part of our paper, here
we introduce our central tool,  a weak version of the Cauchy Schwarz 
for our complex measure
(a Bogoliubov-type inequality).
In
Section \ref{sect:merminwagner}
we present the proof of our main theorems.
Finally,  in  Section \ref{sect:appendix} 
we describe the generalisation to arbitrary number of colours
 and we present the  proof
of our propositions which relates  spins and dimers.

\paragraph{Notation.}
Unless otherwise specified,  the constants $c$, $c_i$, $i=1,2, \ldots$, appearing in the text will always be positive,  finite and will possibly depend on the parameter $\beta$, which is introduced below and corresponds to the inverse monomer activity in the monomer double-dimer model.
Their value may change from line to line.  
Any further dependence of such constants   from the other parameters
will be specified explicitly. 
In the whole paper the graph $G=(V,E)$ will always be an undirected finite bipartite  graph.
We write $x \sim y$ whenever $\{x, y\} \in E$.
Moreover, $d(x,y)$ will be used to denote the graph distance.
We  use the  notation $\mathbb{N}_0 = \{0, 1, \ldots\}$,
$\mathbb{N}=\{1,2,\ldots\}$, $\mathbb{R}^+_0 = \{x \in \mathbb{R}: x \geq 0\}$. 
Finally,  we use the abbreviations `LHS' for  `left-hand side' and `RHS' for `right-hand side'.

\section{The spin representation}
\label{sect:spinsystem} 
We let $G=(V,E)$ be an undirected finite bipartite graph with $o \in V$ a prescribed vertex.
We let $V^e$ (resp. $V^o$)  be the set of vertices having even  (resp. odd) graph distance from $o$. 
We let $U : \mathbb{N}_0 \mapsto \mathbb{R}^+_0$ be a \textit{weight function}, which defines our model.  In the whole paper we assume that the non negative sequence $\big (U(n)\big)_{n \in \mathbb{N}_0}$ is summable,
 that $U(n)>0$ for some $n>0$, and that $U(n) \leq 1$ for any $n \in \mathbb{N}_0$. 
We introduce a `local' spin space $\Xi =  [0, 2 \pi)^2$. 
We denote by $\Omega_{s} := \Xi^V$ the configuration space 
and define for each $\boldsymbol{s} = (s_z)_{z \in V}  \in \Omega_s$,
with $s_z = (s_z^1, s_z^2)$ the   \textit{spin at $z$},
namely the vector function
$$
S_z = (S_z^1,  S_z^2),
$$
where the function $S_z^k : \Omega_s \mapsto \mathbb{C}$ is defined for each $k \in \{1,2\}$ as 
\begin{equation}\label{eq:spinvariable}
S^k_z (\boldsymbol{s}) :=
\begin{cases}
e^{ i s_z^k  }   & \mbox{ if $k=1$ and $z$ even or $k=2$ and $z$ odd,}\\
e^{ - i s_z^k  } &   \mbox{ if $k=2$ and $z$ even or $k=1$ and $z$ odd,}
\end{cases}
\end{equation}
and interpret $s^1_z$, $s^2_z$ as the angles of the two components of the spin $S_z$.
We define for each $\boldsymbol{s} \in \Omega_s$ 
the function
\begin{equation}\label{eq:gfunction}
\gamma_z ( \boldsymbol{s}) : = \frac{1}{(2 \pi)^2}   \sum\limits_{n=0} ^{\infty} U(n)  
\big (\overline{ S_z^1( \boldsymbol{s}) S_z^2( \boldsymbol{s}) } \big )^n,
\end{equation}
and 
\begin{equation}\label{eq:alternation}
 \boldsymbol{\gamma}( \boldsymbol{s}) : = \prod_{z \in V}  \gamma_z(\boldsymbol{s}).
\end{equation}
We define the \textit{Hamiltonian} function
\begin{align}\label{eq:hamiltoninan}
H(  \boldsymbol{s} ) & : =     \sum\limits_{  \{i, j\} \in E  }  \big ( 
S_i^1 (\boldsymbol{s} )  S_j^1 (\boldsymbol{s} )  +    S_i^2 (\boldsymbol{s} )  S_j^2 (\boldsymbol{s} )  \big ) 
+  \,  h \sum\limits_{k \in V} \, 2 \, \cos(r s^1_k),
\end{align}
where $h, r \geq 0$ are some parameters.
When using the spin representation of dimer models we will later restrict to $r \in \{1,2\}$, 
but the definition of the spin measure makes sense for any $r \geq 0$.

We define the complex measures that assign to each measurable complex function 
$f : \Omega_s \to \mathbb{C}$ the weight
\begin{align}\label{eq:measure}
 \langle f \rangle_{G, \beta, h, r}   
   &:= \frac{1}{Z^{spin}_{G, \beta, h, r}} 
   \int_{\Omega_s}
   \boldsymbol{d s} \, 
   \boldsymbol{\gamma}(\boldsymbol{s}) \,
   e^{\beta H(\boldsymbol{s})} \, f(\boldsymbol{s}), \\ 
Z^{spin}_{G, \beta, h, r}  
   &:= \int_{\Omega_s}
   \boldsymbol{d s} \, 
   \boldsymbol{\gamma}(\boldsymbol{s}) \,
   e^{\beta H(\boldsymbol{s})},
\end{align}
where the integration is taken with respect to the product of Lebesgue measures on $\Omega_s$.  
Here $\beta \geq 0$ denotes the \textit{inverse temperature}, which in the monomer double-dimer model 
plays the role of the inverse monomer activity.   Using the correspondence between the spin models and graphical configurations (see Proposition \ref{prop:conversion}),
we see that  $Z^{spin}_{G, \beta, h, r}$ is real and strictly positive as long as $G$ admits a dimer configuration, namely $\mathcal{D}_G$ is non-empty. 
The parameter $h$ plays the role of the \textit{external field intensity} for our spin measure.
The parameter $r$, on the other hand, specifies the \textit{type} of external field. 
Its value determines which type of two-point function can be extracted in the case $h = 0$, 
by taking the limit $h \to 0$ in an appropriate way. 
For instance, when $r = 2$ we obtain information about the probability that a loop connects two distant vertices, 
whereas when $r = 1$ we recover the \textit{classical} two-point function, 
namely the ratio between the partition function with a walk connecting two points 
and the partition function restricted to loop configurations only.

\begin{rem}\label{remark}
When $h=0$, the spin representation is invariant under global spin rotations.  
Indeed, by our definition (\ref{eq:spinvariable}), adding the same constant to the `angles' 
$s_x^1$ and $s_x^2$ for every $x \in V$ leaves the integrand in (\ref{eq:measure}) unchanged.  
This invariance will play a crucial role in the proof of our main theorem, and it can be viewed as the analogue of the rotation invariance in $O(N)$ spin models with $N \geq 2$.  

Moreover,   due to our definition (\ref{eq:spinvariable}), (\ref{eq:alternation}) satisfies a complex \textit{chessboard property}, 
namely that odd sites are associated with functions corresponding to the complex conjugate of those associated with even sites.  
This feature is essential for establishing reflection positivity.
\end{rem}

Our ultimate goal is to obtain information on the behaviour of the two-point function when $h=0$.  
We focus on two distinct two-point functions, corresponding to the cases $r=1$ and $r=2$ in equation (\ref{eq:maintheorem}) below.  
These functions are related to two important quantities in our general double-dimer model (see Proposition \ref{prop:twopoint} for details).  

To extract information on these two-point functions, we consider the limit $h \to 0$ of our spin system for $r=1$ and $r=2$ in (\ref{eq:hamiltoninan}), respectively.
In  (\ref{eq:hamiltoninan}) the parameter $r$ controls the `type' of the external field.
We are now in position to state our main theorem on the decay of correlations in the spin system (\ref{eq:measure}).   

\begin{thm}\label{thm:maintheoremspin}
Fix $r \in \{1,2\}$,  let $\beta \geq 0$ be arbitrary. 
There exists $c = c(\beta) \in (0, \infty)$   such that, for any $L, K \in 2 \mathbb{N} \cup \{1\}$ satisfying   $K \leq  \sqrt{\log (L)}$,  we have 
\begin{equation}\label{eq:maintheorem}
0 \leq \sum\limits_{x \in \S_{L,K}}  \frac{1}{|\S_{L,K}|}  \langle  \,  \cos ( r \, s_o^1   ) \cos ( r  \, s_x^1)    \, \rangle_{L,  K, \beta, 0, 0}
\leq c \sqrt{ \frac{K}{\log(L)}} 
\end{equation}
\end{thm}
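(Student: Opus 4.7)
The plan is to establish Theorem \ref{thm:maintheoremspin} via a Mermin--Wagner-type argument adapted to the complex, non-positive measure $\langle \cdot \rangle_{L,K,\beta,h,r}$. The starting point is to turn on an external field $h>0$ in the Hamiltonian with $r=\ell$, study the associated finite-volume magnetisation
\begin{equation*}
m(h) := \langle \cos(\ell\, s_o^1)\rangle_{L,K,\beta,h,\ell},
\end{equation*}
and combine two ingredients: an upper bound on $|m(h)|$ of order $\sqrt{K/\log L}$ uniform in small $h>0$, and the perturbative inequality of Proposition \ref{prop:magnexp}, which controls the Cesaro sum of two-point functions at $h=0$ by $m(h)$ at an appropriately chosen positive field.

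For the bound on $m(h)$ I would mimic the classical spin-wave strategy (Dobrushin--Shlosman, McBryan--Spencer, Ito) but adapted to the complex setting. The crucial observation is that by (\ref{eq:gfunction}) the weight $\boldsymbol{g}(\boldsymbol{s})$ depends only on the differences $s_x^1-s_x^2$, so that the simultaneous rotation
\begin{equation*}
s_x^1 \mapsto s_x^1 + \theta_x,\qquad s_x^2 \mapsto s_x^2 + \theta_x,\qquad x\in\S_{L,K},
\end{equation*}
leaves $\boldsymbol{g}$ invariant for any real profile $(\theta_x)$, while it shifts each nearest-neighbour term in $H$ by a phase $e^{\pm i(\theta_i-\theta_j)}$ and rotates the field term into $h\sum_x 2\cos(\ell(s_x^1+\theta_x))$. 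Choosing $(\theta_x)$ as (the imaginary part of) a truncated lattice Green's function with $\theta_o=\alpha$ and Dirichlet energy $\sum_{\{i,j\}\in E}(\theta_i-\theta_j)^2$ of order $K\alpha^2/\log L$---which is available on a two-dimensional slab with $K\leq\sqrt{\log L}$---and Taylor-expanding the edge phases and the cosine, one obtains an identity comparing deformed and undeformed partition functions together with a linear term proportional to $\alpha\cdot m(h)$. The classical Jensen/Cauchy--Schwarz step, blocked here by non-positivity, is replaced by the weak Cauchy--Schwarz inequality of Theorem \ref{thm:cauchyschwarz}, which allows the ratio of the two partition functions to be controlled in terms of the quadratic Dirichlet form of $\theta$. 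Optimising over $\alpha$ then yields $|m(h)|^2\leq cK/\log L$, uniformly in small $h>0$.

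Feeding this bound into Proposition \ref{prop:magnexp} gives the upper bound in (\ref{eq:maintheorem}) after choosing $h$ in the perturbative regime (heuristically, $h$ of inverse-polynomial order in $|\S_{L,K}|$ so that the linear response dominates). The lower bound by $0$ in (\ref{eq:maintheorem}) is an easier matter: a spatial Fourier expansion identifies the Cesaro sum $\frac{1}{|\S_{L,K}|}\sum_x\langle \cos(\ell s_o^1)\cos(\ell s_x^1)\rangle$ with the zero-mode coefficient of the two-point function, which reflection positivity forces to be nonnegative; alternatively, for $\ell\in\{1,2\}$ Proposition \ref{prop:twopoint} identifies it with a manifestly nonnegative object in the generalised double-dimer representation.

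The main obstacle is the first step. The classical Mermin--Wagner arguments exploit positivity of the Gibbs measure in essential ways---through Jensen for partition-function ratios and ordinary Cauchy--Schwarz for the magnetisation---and both inputs fail here. The weak Cauchy--Schwarz of Theorem \ref{thm:cauchyschwarz} applies only to a restricted class of reflection-positive observables, so the spin-wave deformation must be engineered so that every function it produces (the phase factors, the rotated field, and their Taylor coefficients) belongs to this class. This is precisely what forces the rotation to act simultaneously on $s^1$ and $s^2$ (preserving the $s^1-s^2$ symmetry of $g$ that underlies global rotation invariance), and it presumably restricts the admissible profiles $(\theta_x)$ to be symmetric under the reflections of $\S_{L,K}$ used to implement reflection positivity.
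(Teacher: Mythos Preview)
Your outer skeleton is correct: one bounds the magnetisation $m_{\S_{L,K}}(\beta,h_{L,K},\ell)$ at the field $h_{L,K}=\tilde h/|\S_{L,K}|$ by $c\sqrt{K/\log L}$ (this is Proposition \ref{prop:magnetisationbound}), and then the first inequality in Proposition \ref{prop:magnexp} transfers this to the Cesaro sum at $h=0$. The lower bound by $0$ follows from positive-goodness of $\cos(\ell s_o^1)\cos(\ell s_x^1)$, essentially as you say.

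The gap is in how you propose to bound $m$. You suggest a McBryan--Spencer/Dobrushin--Shlosman spin-wave deformation: pick a profile $(\theta_x)$, compare deformed and undeformed partition functions, and substitute the weak Cauchy--Schwarz of Theorem \ref{thm:cauchyschwarz} for the failing Jensen step. But Theorem \ref{thm:cauchyschwarz} does not bound partition-function ratios at all; it is a Cauchy--Schwarz inequality for the \emph{specific} vectors $A=(\sin(\ell s_x^1))_x$ and $B=\beta(\partial_{s_x^1}\tilde H-\partial_{s_x^2}\tilde H)_x$ under the Fourier-weighted inner product $a\circ b=\sum_{x\in\Hcal\cup\Vcal\cup\Wcal}\cos(k\cdot x)\,a_o b_x+a_o b_o$. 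Its proof runs through Lemma \ref{l.IBP} and the convexity/monotonicity of the two-point function $G(y)=\langle(\mu B_o-A_o)(\mu B_y-A_y)\rangle$ along coordinate axes (Propositions \ref{prop:convexity}--\ref{prop:monotonicity}), and there is no route from this to an estimate on $Z(\theta)/Z(0)$ for a real or complex rotation profile. The paper in fact explicitly notes that the spin-wave strategies of \cite{DobrushinSchlosman,McBryanSpencer} are blocked here.

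What the paper actually does to bound $m$ is the Mermin (1967) mechanism rather than spin-wave deformation. By integration by parts (Lemma \ref{lem:identities}) one has $\langle A\circ B\rangle=\ell\,m$ and $\langle B\circ B\rangle=\sum_{z\sim o}p_z(1-\cos(k\cdot z))+2\ell^2 h\,m$, which behaves like $|k|^2+h\,m$. Theorem \ref{thm:cauchyschwarz} then gives, for each $k\in\S_{L,K,+}^*$, the inequality $\ell^2 m^2\leq(\langle A\circ A\rangle+c)\,\langle B\circ B\rangle$. Dividing by $\langle B\circ B\rangle$ and summing over $k$ yields
\[
m^2\,\frac{1}{|\S_{L,K}|}\sum_{k\in\S_{L,K,+}^*}\frac{1}{\tfrac{2}{\beta}|k|^2+\tfrac{8\tilde h}{|\S_{L,K}|}m}\;\leq\;c,
\]
where Lemmas \ref{lemm:upperA} and \ref{lem:lowerA} control $\sum_k\langle A\circ A\rangle$ by $cKL^2$. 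Restricting to the $k_3=0$ shell, the left sum is a two-dimensional Riesz sum of order $\frac{1}{K}\log L$, which forces $m^2\leq cK/\log L$ (carried out by contradiction in Proposition \ref{prop:magnetisationbound}). No deformation profile $(\theta_x)$ is ever chosen; the ``Dirichlet energy $\sim K/\log L$'' you anticipate enters instead as the inverse of the Fourier sum $\sum_k|k|^{-2}$.
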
 
Most of the paper is devoted to the proof of this theorem,  from which
we deduce Theorems \ref{thm:maintheoremdimer} and  \ref{thm:maintheoremddmodel}. 
Its proof is presented in Section \ref{sect:merminwagner} below.

\section{Multi-occupancy double-dimer model}
\label{sect:randomlooprepresentation}
\label{sect:randomcurrents}
\label{sect:randompaths}
We introduce a generalisation of the monomer double-dimer model in which each vertex is incident to the same number of dimers of each type ($1=$blue, $2=$red), where this number is an arbitrary non-negative integer.  
By introducing suitable pairing variables that match, at each vertex, every blue dimer with a corresponding red dimer, one can still interpret configurations in this more general setting as collections of loops.  

This framework encompasses both the monomer double-dimer model, where the number is restricted to $0$ or $1$, and the standard double-dimer model, where it is exactly $1$.

In Section \ref{sect:generalN} we further extend this construction to the case of an arbitrary (even) number of   colours.

\subsection{Definition}
In order to express correlations of our general spin measure in terms of realisations of the multi-occupancy double-dimer model 
we need to enlarge the (bipartite) graph $G= (V,E)$.

\begin{defn}[Enlarged graph]
We enlarge $G$ by adding two auxiliary vertices:  \\
- a \emph{ghost vertex} $g$, representing the effect of the external magnetic field;   \\
- a \emph{source vertex} $s$, which allows us to express correlation functions of the spin system in terms of dimers.   \\
Each of these vertices is connected by an edge to every vertex of the original graph $G$.  
We denote the enlarged graph by $G_{en} = (V_{en}, E_{en})$, where
\[
V_{en} := V \cup \{g,s\}, \quad g \neq s, \qquad
E_{en} := E \cup E_g \cup E_s,
\]
with
\[
E_g := \bigl\{ \{x,g\} : x \in V \bigr\}, 
\qquad 
E_s := \bigl\{ \{x,s\} : x \in V \bigr\}.
\]
\end{defn}
We will refer to $G=(V,E)$ as the original graph, in order to distinguish it from the enlarged graph.

\subsubsection{Configurations}
We now introduce the configurations of the generalised monomer double-dimer model, 
which correspond to the spin measure defined in Section \ref{sect:spinsystem}.  
The definition of the configuration space depends on the parameter $r$
— which is fixed to be an integer— appearing in the spin measure (\ref{eq:measure}).

\begin{defn}[Dimer cardinalities]\label{def:dimercard}
We first introduce the set of dimer cardinalities
\[
\Sigma_r := \Bigl\{ (m^1,m^2) \in \mathbb{N}_0^{E_{en}} \times \mathbb{N}_0^{E_{en}} \, : \,
\sum_{y \stackrel{en}{\sim} x} m^1_{\{x,y\}} 
= \sum_{y \stackrel{en}{\sim} x} m^2_{\{x,y\}} \ \ \text{and}\ \  
m^i_{\{x,g\}} \in r\mathbb{N}_0, \ \ \forall x \in V, \ i \in \{1,2\} \Bigr\},
\]
where the sum is over all vertices $y$ which are neighbours of $x$ in the enlarged graph (hence, including the ghost and the source vertex).
\end{defn}
For each edge $e\in E_{en}$, the value $m^i_e$ is interpreted as the 
\textit{number of dimers of colour $i \in \{1,2\}$} ($i$-dimers) occupying $e$.

By construction, each vertex $x\in V$ must be incident to the same number of blue and red dimers.  
This constraint does not apply to $s$ or $g$,  which are elements of $V^{en}$ but not of $V$. 
Moreover,  the number  of edges incident to $g$ must carry a multiple of $r$ dimers.

\begin{defn}[Match functions]
Given $m\in \Sigma$,  for each $x \in V$ we define $\mathcal{P}_x(m)$ as the set of \textit{match functions at $x$}, namely the set bijections between the blue dimers incident to $x$ and the red dimers incident to $x$.
We define the match functions as the Cartesian product $\mathcal{P}(m) = \prod_{x \in V} \mathcal{P}_x(m)$.
\end{defn}
Match functions match at each vertex of the original graph one blue dimer with a corresponding 
red dimer.  This allows us to interpret configurations a collections of open or closed paths. 
\begin{defn}[Configurations]
We denote by $\mathcal{W}$ the set of configurations
\[
\mathcal{W} := \{\, w=(m,\pi)=(m^1,m^2,\pi) : m \in \Sigma_r,\ \pi\in \mathcal{P}(m)\,\}.
\]
\end{defn}
Each path is obtained by successively pairing dimers of opposite colours.  
A path may be either \emph{closed}, lying entirely within $G$, or \emph{open}, in which case it starts and ends at $g$ or $s$.  
This is because no pairing functions are defined at $g$ and $s$, and the constraint requiring the number of blue dimers to equal the number of red dimers does not necessarily hold at these vertices.  
Consequently, the ghost and source vertices serve as sources of open paths.  
We refer to closed paths as \emph{loops}, and to open ones as \emph{walks}.

\subsubsection{Measure on double dimers}
We now introduce a measure on the space of configurations.  
As a preliminary step, we define the local time.

\begin{defn}[Local time]
\label{deflocaltime}
Given $m=(m^1,m^2)\in \Sigma_r$, the \emph{local time} at each vertex 
$x\in V$ of the original graph is
\begin{equation}\label{eq:localtime}
n_x(m) \;=\; \sum_{y \stackrel{en}{\sim} x} m^1_{\{x,y\}}
\;=\; \sum_{y \stackrel{en}{\sim} x} m^2_{\{x,y\}}.
\end{equation}
That is, $n_x(m)$ equals the total number of blue (or equivalently red) dimers incident to $x$, 
including those connected to the ghost or source vertex.  
By slight abuse of notation, for $w=(m,\pi)\in \mathcal{W}$ we also write $n_x(w)$ in place of $n_x(m)$.
\end{defn}

We now introduce a measure on $\mathcal{W}$. 
  
\begin{defn}[Measure]
\label{defmeasure}
Let $r \in \mathbb{N}$ and $h,\beta \geq 0$.  
We define the (unnormalised, non-negative) measure on $\mathcal{W}$ by assigning 
to each configuration $w=(m,\pi)\in \mathcal{W}$ the weight
\begin{equation}\label{eq:measurew}
\mu_{\beta,h,r}(w) \;:=\; \nu_{\beta,h,r}(m)
  \prod_{x\in V} \frac{U\bigl(k_x(w)\bigr)}{n_x(w)!},
\end{equation}
where
\[
k_x(w) := n_x(w) - m^2_{\{x,g\}}(w), \qquad x\in V,
\]
and
\begin{equation}\label{eq:edgeweight}
\nu_{\beta,h,r}(m) \;:=\;
\prod_{i=1}^2 
\Big ( 
   \prod_{e\in E}
    \frac{\beta^{m_e^i}}{m^i_e!}
   \prod_{x\in V}
   \frac{(\beta h)^{\,m^i_{\{x,g\}}/r}}{\bigl(m^i_{\{x,g\}}/r\bigr)!}
   \Big ) 
\end{equation}
\end{defn}

The weight of the configurations factorises into two contributions:  
  one depending only on edge cardinalities (eq.~\eqref{eq:edgeweight}), and one depending only on the local time vector.
To obtain correspondence with the spin measure when $h>0$, one must subtract the number of red dimers incident to the ghost vertex when evaluating the weight function in \eqref{eq:measurew},
this requires the introduction of the function $k_x(w)$.

When $h=0$, no edge is allowed to touch the ghost vertex, and the second product in \eqref{eq:edgeweight} equals $1$.
In this case, the measure $\mu_{\beta,h,r}$ does not depend on $r$, 
and we therefore use the simplified notation $\mu_{\beta,0,0}$.

Since $m^i_{\{x,g\}}$ is a multiple of $r$, the ratios $m^i_{\{x,g\}}/r$
are necessarily integers. 

Note that the weight $\mu_{\beta,h,r}(w)$ does not depend on the specific matching $\pi$,  but only on the number of dimers on each edge and their colour.

We use the same notation $\mu_{\beta,h,r}$ for expectations.

\begin{defn}[Expectation]
For  a function $f:\mathcal{W}\to\mathbb{R}$ we set
\[
\mu_{\beta,h,r}(f) \;:=\; \sum_{w\in\mathcal{W}} \mu_{\beta,h,r}(w)\, f(w).
\]
\end{defn}

\subsubsection{Two-point functions}
As observed before, any configuration in $\mathcal{W}$ can be viewed as a collection of paths:
either \emph{loops}, entirely contained in $G$, or \emph{walks}, which necessarily have $s$ and/or $g$ as endpoints.  
In order to represent correlation functions of the spin system introduced in Section~\ref{sect:spinsystem}, 
we need to impose further constraints involving the number of dimers on edges incident to the
source vertex.  

\begin{defn}[Source vector]
Given $w = (m, \pi) \in \mathcal{W}$, we define the \emph{source vector}
\[
\partial w = \bigl( (\partial w)^1_x, (\partial w)^2_x \bigr)_{x \in V},
\]
where $(\partial w)^i_x : \mathcal{W} \mapsto \mathbb{N}_0$ denotes the number of $i$-dimers on the edge $\{x,s\}$. 
In other words, for any configuration $w = (m^1, m^2, \pi) \in \mathcal{W}$ we have
\[
(\partial w)^i_x := m^i_{\{x,s\}}.
\]
\end{defn}

For example, take any configuration $w \in \mathcal{W}$ with no dimer touching the ghost vertex and with 
$(\partial w)^1 = \delta_x + \delta_y$ and $(\partial w)^2 = 0$, where $x$ and $y$ are vertices of $V$ at odd distance. 
By definition of the source vector, this configuration has precisely one blue dimer on the edges $\{x,s\}$ and $\{y,s\}$,
and no additional blue or red dimers incident to the source vertex.
Since on the original graph the number of blue dimers incident to each vertex equals the number of red dimers and each dimer is matched, 
it follows that the configuration consists of a walk whose last steps lie on the edges $\{x,s\}$ and $\{y,s\}$, 
together with an arbitrary collection of loops entirely contained in $G$.  
If, on the other hand, $x$ and $y$ were at even distance, no such configuration could exist, since any walk would necessarily have end-dimers of different colours.

\begin{defn}[Partition function and induced probability measure]
\label{def:probabilitymeasure}
We denote by
\[
\mathcal{W}^{\ell} := \{\, w \in \mathcal{W} : \partial w = (0,0)\,\}
\]
the set of source-free configurations.
In the special case $h=0$, any configuration in this set containing a walk receives zero weight from~\eqref{eq:measurew}. 
The \emph{partition function} of the multi-occupancy double-dimer model is defined as the total weight of all source-free configurations, namely
\begin{equation}\label{eq:partition}
{Z}^{path}_{G,\beta,h,r} := \mu_{\beta,h,r}\bigl(\mathcal{W}^{\ell}\bigr).
\end{equation}
On $\mathcal{W}^{\ell}$ we define the probability measure
\begin{equation}\label{eq:probmeasuregeneral}
P_{G,\beta,h,r}(w) := 
\frac{\mu_{\beta,h,r}(w)}{{Z}^{path}_{G,\beta,h,r}}, 
\qquad \forall\, w \in \mathcal{W}^\ell,
\end{equation}
and denote expectation with respect to $P_{G,\beta,h,r}$ by $E_{G,\beta,h,r}$.
\end{defn}

Our method allows us to obtain information on various two-point functions, but we will focus on the most important ones.
The first two-point function is the ratio of the weight of configurations with a walk and the partition function. 
The second two-point function is the probability, with respect to the probability measure defined in Definition~\ref{def:probabilitymeasure}, 
that there exists a loop connecting two sites.

\begin{defn}[Two-point functions]
For $x,y \in V$, the two-point functions are defined as
\begin{equation}\label{eq:twopointdefinition}
\mathcal{G}^{(1)}_{G,\beta}(x,y) := 
\frac{\mu_{\beta,0,0}(\,x \to y\,)}{Z^{path}_{G,\beta,0,0}},
\qquad
\mathcal{G}^{(2)}_{G,\beta}(x,y) := 
P_{G,\beta,0,0}(\,x \leftrightarrow y\,),
\end{equation}
where
\begin{itemize}
  \item $\{x \leftrightarrow y\} \subset \mathcal{W}^\ell$ is the event that there exists a loop visiting both $x$ and $y$;
  \item $\{x \to y\} \subset \mathcal{W}$ is the event that there exists a walk with one endpoint given by a dimer on $\{x,s\}$ and the other by a dimer on $\{y,s\}$, while all remaining paths are loops entirely contained in  the original graph $G$.
\end{itemize}
\end{defn}
Note that these quantities are defined under the assumption $h=0$, so walks ending at the ghost vertex are excluded.

\subsection{Special cases}\label{s.special}
Certain choices of the weight function give rise to well-known models in statistical mechanics, combinatorics, probability theory, and physics. 
We explain these connections in the most interesting case $h=0$, where the measure~\eqref{eq:measurew} takes a particularly simple form:
no dimer touches the ghost vertex almost surely, and open paths (when present) terminate only at the source vertex. 

\paragraph{Monomer double-dimer model.}
We obtain the monomer double-dimer model when $U(0) = U(1) = 1$ and $U(n) = 0$ for all $n > 1$.  
Under this choice, only configurations in which each vertex is either incident to precisely one blue and one red dimer, or to no dimer at all, are allowed.  
We let
\[
\tilde \Sigma :=  \Bigl\{
(m^1, m^2) \in \{0,1\}^E \times \{0,1\}^E : 
\sum_{y \sim x} m_{\{x,y\}}^1 = \sum_{y \sim x} m_{\{x,y\}}^2 \in \{0,1\},\ \forall x \in V
\Bigr\}.
\]
The partition function~\eqref{eq:partition} then takes the simple form 
\[
Z^{path}_{G,\beta,0,0} 
= \sum_{(m^1, m^2) \in \tilde \Sigma} \prod_{e \in E} \beta^{m^1_e + m^2_e}
= \beta^{|V|}\, \mathbb{Z}_{G,1/\beta},
\]
where $\mathbb{Z}_{G,1/\beta}$ was defined in~\eqref{eq:probddimer}.
For the first identity we used $h=0$ and the fact that, given $m \in \Sigma_r$ with each vertex $x \in V$ either incident to one blue and one red dimer or left untouched, the set $\mathcal{P}(m)$ contains exactly one element.  
For the second identity we used that the total number of monomers plus the total number of dimers always equals $|V|$.  
Hence the probability measure~\eqref{eq:probmeasuregeneral} coincides with the probability measure on $\tilde \Sigma$ assigning to each $m \in \tilde \Sigma$ the weight
\[
 \frac{\prod_{e \in E} \beta^{m_e^1 + m_e^2}}{Z^{path}_{G,\beta,0,0}}.
\]
Moreover, for any $x,y \in V$ we have
\begin{align}\label{eq:monomerdimerloop} 
\mathcal{G}^{(1)}_{G,\beta}(x,y) & = \mathbb{G}_{ G,   1/\beta }(x , y) \\
\mathcal{G}^{(2)}_{G,\beta}(x,y) & = \mathbb{P}_{G, 1/\beta}(x \leftrightarrow y).
\end{align}
where, the quantities on the right-hand side involve the monomer double-dimer model and have been defined in Section \ref{sect:monomerdoubledimerresults}.

\paragraph{Double-dimer model and dimer model.}
We obtain the double-dimer model (corresponding to the monomer double-dimer model with zero monomer activity) 
by setting $U(1)=1$ and $U(n)=0$ for all $n \neq 1$.  
In other words, each vertex is incident to precisely one blue dimer and one red dimer.  
For $\beta=1$ the partition function~\eqref{eq:partition} reduces to
\begin{equation}\label{eq:dimersquared}
Z^{path}_{G,1,0,0} = \bigl| \mathcal{D}_G(\emptyset) \bigr|^2.
\end{equation}
Moreover,
\[
\mu_{1,0,0}(x \to y) = \bigl| \mathcal{D}_G(\emptyset) \bigr| \, \bigl| \mathcal{D}_G(\{x,y\}) \bigr|
\]
(see also \cite[Figure~2]{T}).
Therefore, from~\eqref{eq:twopointdefinition} and the previous identities we obtain the following relation, which is key for the proof of Theorem~\ref{thm:maintheoremdimer}:
\begin{equation}\label{eq:monomerrelation}
\mathcal{G}^{(1)}_{G,1}(x,y) =  \mathcal{C}_G(x,y) 
\end{equation}
holding for any distinct $x,y \in V$,
where on the right-hand side we have the monomer-monomer correlation for the dimer model, which has been defined in (\ref{eq:monomerdimer}).

\paragraph{The XY model.}
We obtain the double-dimer representation of the XY model when $U(n) = 1$ for all $n \in \mathbb{N}_0$.  
In this case the random loop model~\eqref{eq:measure} is equivalent to the models introduced in \cite{Lis, LeesTaggiCMP2020} 
(to see the correspondence, one interprets the two colours of dimers as the two possible directions of edges in the multigraph considered in \cite{Lis}), 
where it is proved that such models provide a representation of the XY model.

\subsection{From spins to double dimers}
The next proposition establishes a correspondence between correlations in 
the multi-occupancy double-dimer model 
and in the spin system defined in Section~\ref{sect:spinsystem}. 
In the statement, the domain of the weight function, which has been defined in Section \ref{sect:spinsystem}, 
is extended to the negative integers, namely
$U: \mathbb{Z} \mapsto \mathbb{R}^+_0$,  under the assumption
that $U(n) = 0$ for any  $n<0$. 

\begin{prop}[From spin correlations to double dimers]\label{prop:conversion}
Let $G$ be a bipartite graph such that  $\mathcal{D}_G \neq \emptyset$. 
For $i \in \{1,2\}$, let $u^i= (u^i_x)_{x \in V} \in  \mathbb{Z}^{V}$
arbitrary vectors,
let $u^{i, \pm} = (u^{i, \pm}_x)_{x \in V} \in \mathbb{Z}^V$
be defined as  $u^{i, +}_x = u^{i}_x \vee 0$ 
and $u^{i, -}_x = u^{i}_x \wedge 0$ for each $x \in V$.
Then, for any $r \in \{1,2\}$,  $\beta,h \geq 0$,  
we have that 
$$
Z^{spin}_{G, \beta, h, r} = Z^{path}_{G, \beta, h, r} > 0
$$
and that 
\begin{multline}
   \label{eq:correlation}
 \Bigl\langle       
  \prod_{x \in V} \prod_{i=1}^{2}  (S_x^i)^{u_x^i}
  \Bigr\rangle_{G, \beta, h, r}   
  = \frac{1}{{Z}^{path}_{G, \beta, h, r}} 
\mu_{\beta, h, r}  \Bigg(
   \mathbbm{1}_{\{ \partial w  = ( u^{1,+}  - u^{2,-},\, u^{2,+} -  u^{1,-} )\}}
 \prod_{x \in V} \frac{ U\!\left( k_x(w) + u^{1,-}_x + u^{2,-}_x \right) }{ U\!\left( k_x(w) \right) } \Bigg),
 \end{multline}
\end{prop}
The previous proposition shows that averaging the function \( (S_x^i)^n \), with \( n>0 \) and \( x \in V \), enforces the presence of \( n \) \( i\)-dimers on the edge \(\{x,s\}\). 
Conversely, averaging \( {(\overline{S_x^i})}^n = (S_x^i)^{-n} \) enforces the presence of dimers of the opposite colour on \(\{x,s\}\), while simultaneously modifying the local weight assigned by the weight function at \(x\). 
The proof of Proposition~\ref{prop:conversion} is deferred to Section~\ref{sect:proofofprop}.
Note that, since the weight of the measure $\mu_{\beta, h, r}$ contain products of factors $U(k_x(w))$,
the quantity in the RHS of (\ref{eq:correlation})
is well-defined also when  the denominator $U\!\left( k_x(w) \right)$
equals zero (division by zero, then, never occurs).

The next proposition states a further correspondence between spin and dimer observables.
The proposition involves the following functions, for any $z \in V$,
\begin{equation}\label{eq:functiongu}
\psi_z(\boldsymbol{s}) = 
\frac{ U(0)\,  e^{- 2 h \cos ( r s_z^1)} }{(2 \pi)^2 \gamma_z(\boldsymbol{s})}
\qquad
\psi_{p,z}(\boldsymbol{s}) =  
    \frac{ (\overline{ S_z^1 S_z^2 ) }^p }{(2 \pi)^2  \gamma_z(\boldsymbol{s})}
\end{equation}
with $r, p \in \mathbb{N}_0$, 
whose average induces constraints on the local time.

\begin{prop}[Inducing constraints on the local time]
\label{prop:conversionlocaltime}
Let $G$ be a bipartite graph such that  $\mathcal{D}_G \neq \emptyset$. 
Then, for any $r \in \{1,2\}$, $p \in \mathbb{N}_0$, $\beta,h \geq 0$,  
the following identities hold:
 \begin{align}
    \label{eq:correlation2}
     \Big\langle      
  \prod_{z \in A} \psi_{z} 
 \Big\rangle_{G, \beta, h, r} 
     &=    P_{G, \beta, h, r} \big ( \forall z \in A,\; n_z(w) = 0  \big), \\
 \label{eq:correlation3}
 \Big\langle      
 \prod_{z \in A} \psi_{p,z} 
 \Big\rangle_{G, \beta, h, r}   
    &=    P_{G, \beta, h, r} \big ( \forall z \in A,\; k_z(w)  =  p  \big).
\end{align}
\end{prop}
The proof of the proposition is postponed to Section \ref{sect:proofofprop} as well.

We now discuss whether the average of a general complex function with respect to our complex spin measure is real and positive.  
\begin{defn}[Real-expectation and Positive-expectation functions]
\label{def:realpositiveexp}
We say that a function $f : \Omega_s \to \mathbb{C}$ is \textit{real-expectation} 
if there exists for each $x \in V$   a real-valued sequence 
$ \big (a^{x}_{n_1, n_2} \big )_{  n_1, n_2 \in \mathbb{Z} }$
 such that 
$$
f(s) = 
\sum_{\boldsymbol{n}  \in \mathbb{Z}^V \times \mathbb{Z}^V  }  
\prod_{x \in V}     a^{x}_{n_x^1, n_x^2} e^{i  ( n_x^1 s_x^1 - n_x^2 s_x^2)}
$$ 
and
\[
\int_{\Omega_s} \boldsymbol{ds}\, |f(\boldsymbol{s})| < \infty.
\]
We say that $f$ is \textit{positive-expectation} if it is real-expectation and, in addition, 
$a^{x}_{n_1, n_2} \geq 0$ for each $n_1, n_2 \in \mathbb{Z}$
and $x \in V$. 
\end{defn}
Applying 
Proposition \ref{prop:conversion}, 
and 
recalling  that $S_x^i = e^{ \pm i s_x^i}$ 
we see that, if $f$ is  real-expectation, then   $ \langle f  \rangle_{G, \beta, h, r} $ has a real  (finite) value and,  if  $f$ is positive-expectation,  it also satisfies $ \langle f  \rangle_{G, \beta, h, r} \geq 0.$

We now use the double-dimer   formulation to show that the average of certain observables for the spin system is zero when  the external field is zero.
\begin{prop}[Observables with zero average]
\label{prop:onepointiszero}
Let $G$ be a bipartite graph such that  $\mathcal{D}_G \neq \emptyset$. 
Suppose that  $h=0$ and let $\beta, r  \geq 0$ be arbitrary,
fix $n \in \mathbb{N}$ arbitrary. 
For any $x  \in V$ we have that 
\begin{equation}\label{eq:averageiszero1}
\langle  ({S^i_x} )^n \rangle_{G, \beta, h, r} = \langle  (\overline{S^i_x} )^n \rangle_{G, \beta, h, r}  =  0. 
\end{equation}
Moreover, if $x, y \in V$   have different parity, then
$$
\langle  (\overline{{S^i_x}}  S^i_y)^n \rangle_{G, \beta, h, r}   = 
\langle  ( {S^i_x}  \overline{ S^i_y})^n \rangle_{G, \beta, h, r}  = 0,
$$
while if they have the same  parity, then
$$
\langle  ({S^i_x}  { S^i_y})^n \rangle_{G, \beta, h, r}  = \langle  {( \overline{{S^i_x}  { S^i_y}})}^n \rangle_{G, \beta, h, r}   
 = 0. 
$$
\end{prop}
\begin{proof}
By Proposition~\ref{prop:conversion} we observe that 
$\langle (\overline{S^i_x})^n \rangle_{G,\beta,h,r}$ and 
$\langle (S^i_x)^n \rangle_{G,\beta,h,r}$ 
are sums over configurations 
$w\in\mathcal{W}$ such that 
$\partial w = (n\delta_x,0)$ or $\partial w=(0,n\delta_x)$. 
Since $h=0$, no path can end at the ghost vertex in such configurations. 
Any path with both end-steps on $\{x,s\}$ necessarily has dimers of different colour in its last steps. 
Hence, the corresponding sum is over configurations with zero weight, which proves the first identity. 

\medskip
We now prove the second identity. 
Since $h=0$, Proposition~\ref{prop:conversion} implies that 
$\langle (\overline{S^i_x}S^i_y)^n \rangle_{G,\beta,h,r}$ and 
$\langle (S^i_x\overline{S^i_y})^n \rangle_{G,\beta,h,r}$ 
are sums over configurations with $n$ dimers on $\{x,s\}$ and $n$ dimers on $\{y,s\}$, 
where the dimers on $\{x,s\}$ are of different type from those on $\{y,s\}$, 
and no other dimer touches the source or ghost vertex. 
This implies the existence of a path whose last steps are dimers of different type on the edges 
$\{x,s\}$ and $\{y,s\}$. 
Such a condition can be satisfied only if the distance between $x$ and $y$ is even; 
otherwise the sum is empty. 
This proves the second identity. 

\medskip
Finally, we prove the third identity. 
If $x=y$ then the first identity applies, so we assume $x\neq y$.
Again with $h=0$, Proposition~\ref{prop:conversion} implies that 
$\langle (\overline{S^i_xS^i_y})^n \rangle_{G,\beta,h,r}$ and 
$\langle ({S^i_x S^i_y})^n \rangle_{G,\beta,h,r}$ 
are sums over configurations with $n$ dimers on $\{x,s\}$ and $n$ dimers on $\{y,s\}$, 
all of the same type, and no further dimer touching the source or ghost vertex. 
This implies the existence of a path whose last steps are dimers of the same type on the edges 
$\{x,s\}$ and $\{y,s\}$. 
Such a condition can be satisfied only if the distance between $x$ and $y$ is odd; 
otherwise the sum is empty. 
This proves the third identity.

\end{proof}

The next proposition relates  the two point functions (\ref{eq:twopointdefinition}),
which are defined for the multi-occupancy double-dimer model,  to the two-point functions of the spin system appearing in  Theorem \ref{thm:maintheoremspin}.

We say that the weight function $U$ is \textit{normalized monotone} if $U(0)=1$ 
and $U(n+1) \leq U(n)$ for each $n \in \mathbb{N}_0$.
We say that the weight function is \textit{fast decaying} 
if it is normalized monotone and, in addition, 
$U(n+1) \leq K_U \frac{U(n)}{n}$ for each $n \in \mathbb{N}$, where $K_U>0$
is an absolute constant. 
For example, the weight function of the monomer double-dimer model
is fast decaying, while the weight function of the double-dimer model
is not even normalized monotone, since in this case  $U(0) = 0$. 

\begin{prop}[Two-point functions]
Let $G$ be a bipartite graph such that  $\mathcal{D}_G \neq \emptyset$. 
\label{prop:twopoint}
For any $x, y \in V$  we have that 
\begin{align}
\mbox{ if $ x \neq y $}   \quad \quad \mathcal{G}_{G, \beta}^{(1)}(x,y)  &  \leq      4 \,   \langle  \cos  (  s_x^1 )  \,   \cos   ( s_y^1 )  \rangle_{ G, \beta, 0, 0   }, 
\label{eq:twopoint1} \\
\mbox{ if $ d(x,y) \in 2 \mathbb{N}+1$}   \quad \quad  \mathcal{G}_{G, \beta}^{(2)}(x,y) & \leq  K_U \,   \langle      \cos  (  2 s_x^1 )  \,   \cos   (2 s_y^1 )  \rangle_{G,  \beta, 0, 0},
    \label{eq:twopoint2}
  \end{align}
where the first inequality holds for any weight function $U$ (satisfying the general assumptions which have been made in Section \ref{sect:spinsystem})  if $x$ and $y$ have different parity and  for any normalized monotone weight function $U$  if $x$ and $y$ have the same parity,   the second inequality  holds only if $U$ is fast decaying.
\end{prop}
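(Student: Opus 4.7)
The plan is to prove both inequalities simultaneously by expanding the cosine products via Euler's formula and then applying Proposition~\ref{prop:conversion}. One writes $4\cos(\ell s_x^1)\cos(\ell s_y^1) = \sum_{\epsilon_x,\epsilon_y\in\{\pm 1\}} e^{i\ell(\epsilon_x s_x^1+\epsilon_y s_y^1)}$ and applies the identity (\ref{eq:correlation}) to each of the four resulting spin exponentials. In every term the vector $u^1\in\mathbb{Z}^V$ is supported on $\{x,y\}$ with entries in $\{\pm\ell\}$, and $u^2\equiv 0$; the sign of each entry is dictated by $\epsilon_x,\epsilon_y$ combined with the parity convention of (\ref{eq:correlation}) that reverses the sign at vertices of $V^o$. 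Thus $\langle\cos(\ell s_x^1)\cos(\ell s_y^1)\rangle$ becomes a sum over four families of dimer configurations with prescribed source vector $\partial w$ supported on $\{x,y\}$, each weighted by a product of ratios $U(k_z + (u^1_-)_z)/U(k_z)$ at $z\in\{x,y\}$.

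For the first inequality (\ref{eq:twopoint1}), the four source vectors produced are $\partial w\in\{(\delta_x+\delta_y,0),(0,\delta_x+\delta_y),(\delta_x,\delta_y),(\delta_y,\delta_x)\}$. A bipartite-parity check shows that the colors of the two terminal dimers of a walk from $x$ to $y$ are determined, up to a global color-swap, by the parities of $x$ and $y$: for each choice of these parities only two of the four source vectors are compatible with an actual walk joining $x$ and $y$, and the other two contributions vanish identically. The color-swap symmetry of $\mu_{\beta,0,0}$ identifies the two surviving terms with $\mu(x\to y)$ up to the ratio $U(n_z-1)/U(n_z)$ that (\ref{eq:correlation}) produces at each $z\in\{x,y\}$ where $(u^1_-)_z=-1$. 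Niceness of $U$ gives $U(n_z-1)\ge U(n_z)$ on the relevant configurations (on which $n_z\ge 1$ is forced by the source dimer at $z$), which yields (\ref{eq:twopoint1}); when the parities of $x$ and $y$ differ, one of the two surviving exponentials has $u^1_-\equiv 0$ and hence carries no weight ratio at all, so the bound holds without any monotonicity hypothesis on $U$.

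For the second inequality (\ref{eq:twopoint2}) the same scheme applied to $\cos(2s_x^1)\cos(2s_y^1)$ produces source vectors built from $\pm 2\delta_x$ and $\pm 2\delta_y$. The odd-distance hypothesis $d(x,y)\in 2\mathbb{N}+1$ again selects the two admissible cases $\partial w\in\{(2\delta_x+2\delta_y,0),\,(0,2\delta_x+2\delta_y)\}$, each describing two walks from $x$ to $y$ whose four terminal dimers at $s$ share a common color. The key combinatorial step is a correspondence between such two-walk configurations and source-free configurations in which a single loop visits both $x$ and $y$: among the pairings of the four source dimers into walk endpoints, exactly one re-closes the two walks into a single loop through $x$ and $y$, which identifies the image with the event $\{x\leftrightarrow y\}$ defining $\mathcal{G}^{(2)}$. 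Tracking the change of the factors $1/n_z!$ and $U(n_z)$ in (\ref{eq:measurew}) at $z\in\{x,y\}$ when the source dimers are inserted yields a Jacobian of the form $(n_z+1)(n_z+2)\,U(n_z+2)/U(n_z)$, and the fast-decay hypothesis $U(n+1)\le K_U U(n)/n$ is exactly what is needed to bound this Jacobian by a universal multiple of $K_U^2$. Making the combinatorial correspondence rigorous and showing that the weight shift at $x$ and $y$ is absorbed uniformly into the prefactor $K_U$ is the main obstacle of the proof.
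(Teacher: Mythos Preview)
Your treatment of \eqref{eq:twopoint1} is essentially the paper's argument: expand the cosine product, kill the two ``same-sign'' exponentials via the bipartite parity obstruction (Lemma~\ref{lem:onepointiszero} in the paper), and identify the surviving terms with $\mu(x\to y)$ using monotonicity of $U$ when necessary. This part is fine.

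For \eqref{eq:twopoint2} there is a genuine gap. Your correspondence ``re-close the two walks into a single loop by pairing the four source dimers'' does not work as stated, and your Jacobian $(n_z+1)(n_z+2)\,U(n_z+2)/U(n_z)$ reflects the wrong map. If you simply delete the two source dimers of a common colour at $\{x,s\}$, the two dimers they were paired to at $x$ have the \emph{same} colour, and the pairing rule at $x$ (which matches colour $1$ to colour $2$) forbids pairing them to each other; the resulting object is not in $\Sigma$. The paper's map $f$ fixes this by first \emph{swapping the colours along one of the two open paths}: after the swap the two $s$-dimers at $x$ have opposite colours, so removing them leaves one dangling $1$-dimer and one dangling $2$-dimer at $x$, which can legitimately be paired. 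A consequence of the swap is that the local time at $x$ (and at $y$) drops by $1$, not $2$, under the map, so the weight comparison one actually needs is
\[
\frac{U(n-1)}{(n-1)!}\ \text{versus}\ \frac{U(n-2)}{n!}\quad\text{at }z\in\{x,y\},
\]
and it is here that the fast-decay hypothesis $U(n)\le K_U\,U(n-1)/(n-1)$ enters. Your proposed Jacobian, based on a local-time shift of $2$, would arise from a map that does not land in $\Sigma$. Once you insert the colour-swap step the rest of your outline goes through, but that step is the crux and is missing from your sketch.
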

\begin{proof}
We omit the subscripts from $\langle  \, \, \, \rangle_{G, \beta, 0, 0}$ for a lighter notation. 
First note that by the definition of the spin variables (\ref{eq:spinvariable}) we have 
$$
 \langle \cos (  \ell s_x^1)  \cos  ( \ell  s_y^1)  \rangle=   \frac{1}{4} \, \big\langle \big( (S_x^1)^\ell + (\overline{S_x^1})^\ell \big)
\big( (S_y^1)^\ell + (\overline{S_y^1})^\ell \big) \big\rangle.
$$
Now if $x$ and $y$ have different parity then by Proposition \ref{prop:onepointiszero} we deduce that 
$
\langle \cos (  \ell s_x^1)  \cos  ( \ell  s_y^1)  \rangle
= 
\frac{1}{4} {\langle  ( S_x^1 S_y^1 )}^{\ell} \rangle 
+
\frac{1}{4} \langle { ( \overline{S_x^1 S_y^1} )}^{\ell}  \rangle 
$.
We then 
 obtain from Proposition \ref{prop:conversion} that
\begin{multline}\label{eq:casediff}
\langle \cos (  \ell s_x^1)  \cos  ( \ell  s_y^1)  \rangle = \frac{1}{ 4 Z^{path}_{G, \beta, 0, 0} }  \Big ( \sum\limits_{ \substack{ w \in \mathcal{W} : \\ \partial w = ( \ell \delta_x + \ell \delta_y,  0)  }}  
     \nu_{\beta, 0, 0}(m) \prod_{z \in V}   \frac{ U   ( n_z(m)    )}{n_z(m)!} \\ 
     + 
     \sum\limits_{ \substack{  w \in \mathcal{W} : \\ \partial w = (0,   \ell \delta_x + \ell \delta_y)  }}  
     \nu_{\beta, 0, 0}(m) \prod_{z \in V}   \frac{ U   ( n_z(m) - \ell \delta_x(z) - \ell  \delta_y(z)   )}{n_z(m)!} \Big ).
\end{multline} 
Instead,  if $x$ and $y$ have the same parity, we obtain from Proposition
\ref{prop:onepointiszero} that 
$
\langle \cos (  \ell s_x^1)  \cos  ( \ell  s_y^1)  \rangle
= 
\frac{1}{4} \langle{ (  S_x^1 \overline{ S_y^1 })}^{\ell} \rangle 
+
\frac{1}{4} \langle { (  \overline{S_x^1 } S_y^1) }^{\ell}  \rangle 
$
and from Proposition 
 \ref{prop:conversion} that 
     \begin{multline}\label{eq:casesame}
\langle \cos (  \ell s_x^1)  \cos  ( \ell  s_y^1)  \rangle = \frac{1}{4 Z^{path}_{G, \beta, 0, 0} }  \Big ( \sum\limits_{ \substack{ (m, \pi) \in \mathcal{W} : \\ \partial m = ( \ell \delta_x ,  \ell \delta_y)  }}  
     \nu_{\beta, 0, 0}(m) \prod_{z \in V}   \frac{ U   ( n_z(m) - \ell \delta_x(z)    )}{n_z(m)!} \\ 
     + 
     \sum\limits_{ \substack{ (m, \pi) \in \mathcal{W} : \\ \partial m = ( \ell \delta_y,   \ell \delta_x) }}  
     \nu_{\beta, 0, 0}(m) \prod_{z \in V}   \frac{ U   ( n_z(m) - \ell  \delta_y(z)   )}{n_z(m)!} \Big ).
\end{multline} 
If $\ell=1$ we then realise that the first term in the brackets in  (\ref{eq:casediff})  equals $\mu_{\beta, 0, 0}(x \rightarrow y)$.
 Indeed, any configuration 
$w = (m, \pi)$
with no dimer touching the ghost vertex and 
such that  
 $ \partial m = ( \delta_x  + \delta_y, 0 )$
 (with a $1$-dimer on  $\{x,s\}$ and on $\{y,s\}$) 
necessarily has a path from  $\{x,s\}$ to $\{y,s\}$
if $x$ and $y$ have different parity. 
Similarly,  using the monotonicity property of $U$, 
we see that the first term in the brackets in   (\ref{eq:casesame}) 
is greater or equal than $\mu_{\beta, 0, 0}(x \rightarrow y)$.
 Indeed, 
any configuration  $w = (m, \pi)$ 
with no dimer touching the ghost vertex and 
such that $ \partial m = ( \delta_x,  \delta_y)$
(with a $1$-dimer on  $\{x,s\}$ and a $2$-dimer on $\{y,s\}$)
necessarily has a path  
 from $\{x,s\}$ to $\{y,s\}$ if $x$ and $y$ have the same parity.
In both cases, being the second term in the brackets non negative,  we conclude the proof of  (\ref{eq:twopoint1}).

We now prove (\ref{eq:twopoint2})
when $x$ and $y$ have different parity.
We now introduce a map $f$ which maps any configuration
in 
$ \mathcal{A} := \{w = (m, \pi) \in \mathcal{W}$ such that $\partial m= (0, 2 \delta_x + 2 \delta_y)  \} \cap \mathcal{B}$
where $\mathcal{B} = \{$no dimer touches the ghost vertex$\}$
to the configuration
$f(w) = (m^\prime, \pi^\prime) \in \{x \leftrightarrow y\}  \subset \mathcal{W}^\ell$
which is obtained by 
\begin{enumerate}[(i)]
\item switching to blue (resp.  red)  the colour of each red (resp. blue) dimer belonging to one of the two open paths from $\{x,s\}$ to $\{y,s\}$, 
\item removing the two dimers on $\{x,s\}$ and the two dimers on 
 $\{y,s\}$,
\item  for each $z \in \{x,y\}$   pairing together at $z$ the
two dimers which were paired to those which have been removed. 
\end{enumerate}
Thus,   $f(\mathcal{A}) = \{x \leftrightarrow y\}   \subset \mathcal{W}^\ell$
and $f^{-1}(\{x \leftrightarrow y\}  ) =  \mathcal{A}$.
We observe that for any configuration $w =(m, \pi) \in \{ x \leftrightarrow y \} \subset \mathcal{W}^\ell$ and $w^\prime = (m^\prime, \pi^\prime) \in f^{-1}(w)$ 
we have that, for each $z \in V$, 
\begin{equation}\label{eq:multi2}
|f^{-1}(w)| \geq 1,  \quad \quad n_z(m^\prime) = n_z(m) + \delta_x(z) + \delta_y(z)
\quad \quad  \nu_{\beta, h, r}(m) =  \nu_{\beta, h, r}(m^\prime).
\end{equation}

Using (\ref{eq:multi2}) for the first inequality and the property that $U$ is fast decaying for the second inequality we obtain that 
\begin{align*}
\mathcal{G}^{(2)}_{G, \beta}(x,y)   = &   \sum\limits_{ w  = (m, \pi) \in \{x \leftrightarrow y\}  }   
     \nu_{\beta, 0, 0}(m) \prod_{z \in V }   \frac{ U   ( n_z(m)   )}{n_z(m)!} \\
        = &  \sum\limits_{ w  = (m, \pi) \in \{x \leftrightarrow y\}  }   
   \frac{1}{ | f^{-1}(w)|  }   \sum\limits_{ w^\prime =(m^\prime, \pi^\prime)  \in f^{-1}(w)   }   
     \nu_{\beta, 0, 0}(m) \prod_{z \in V }   \frac{ U \big  ( n_z(m)      \big )}{n_z(m)!}  \\
      \leq  &  \sum\limits_{ w  = (m, \pi) \in \{x \leftrightarrow y\}  }   
   \sum\limits_{ w^\prime =(m^\prime, \pi^\prime)  \in f^{-1}(w)   }   
     \nu_{\beta, 0, 0}(m^\prime) \prod_{z \in V}   \frac{ U \big  ( n_z(m^\prime)     -  \delta_x(z) -   \delta_y(z)  \big )}{ \big (n_z(m^\prime) - \delta_x(z) - \delta_y(z)\big )!}    \\
     \leq  & K_U \, \, \sum\limits_{ w^\prime  = (m^\prime, \pi^\prime) \in \mathcal{A}   }   
     \nu_{\beta, 0, 0}(m^\prime) \prod_{z \in V}   \frac{ U \big  ( n_z(m^\prime)     - 2 \delta_x(z) - 2  \delta_y(z)  \big )}{n_z(m^\prime)!},
\end{align*}
where $K_U > 0$ denotes the decay constant associated with the fast--decaying weight function $U$. 
Observing that the last term coincides with the second term inside the brackets in~\eqref{eq:casediff} when $\ell=2$, 
we thereby complete the proof of~\eqref{eq:twopoint2}.
\end{proof}

\section{Reflection positivity and its consequences}
\label{sect:RPsection}
In this section we show that the complex measure $\langle \, \, \,  \rangle_{L,K, \beta, h, r}$ which has been introduced in Section \ref{sect:spinsystem} is reflection positive and  use this property to
derive convexity and monotonicity properties for the two-point function,
following \cite{LeesTaggiCMP2020},
and to derive a Chessboard estimate,
which is a well-known consequence.
The reflection positivity property of the 
complex measure   reflects an equivalent reflection  positivity property of the double-dimer model \cite{T}.
It  relies on the complex duality relation between vertices having opposite parity
in (\ref{eq:measure}) and its proof is classical \cite[Chapter 10]{Velenik}.

In the whole section we fix
arbitrary values $\beta, h, r \geq 0$, $L, K \in 2 \mathbb{N} \cup \{1\}$,
and sometimes omit the subscripts 
of $\langle \, \, \,  \rangle_{L, K, \beta, h, r}$ for a lighter notation.

\subsection{Reflection positivity}
\label{sect:RP}
Recall that $\S_{L,K}$ is the slab torus.
We let $\S_{L,K}^o \subset \S_{L,K}$ be the set of odd sites, and 
$\S_{L,K}^e \subset \S_{L,K}$ be the set of even sites.

We let $\mathcal S$ be a plane  which is orthogonal to the Cartesian vector $e_i$ for some  $i \in \{1,2,3\}$ and which intersects the midpoint of some edges.

\begin{defn}[Reflection of sites, configurations and functions]

We let $\Theta : \S_{L,K} \mapsto \S_{L,K}$ be a reflection operator
 with respect to the plane  $\mathcal S$.
 
By a slight abuse of notation we keep using  $\Theta : \Omega_s  \mapsto \Omega_s $
to denote  a reflection operator  that associates to each configuration 
$\boldsymbol{s} = (  s_x )_{x \in V}$ the reflected spin configuration 
$\Theta ( \boldsymbol{s} ) = ( s_{ \Theta(x)}   )_{x \in \S_{L,K}}$.

Given a function $f : \Omega_s \mapsto \mathbb{C}$, 
we define the reflected function $\Theta f$ as,
$$
\forall \boldsymbol{s} \in \Omega_s \quad \Theta f ( \boldsymbol{s}) : = {f( \Theta(\boldsymbol{s}) )}.
$$ 
\end{defn}

\begin{defn}[Function domain]
We say that the function  $f : \Omega_s \mapsto \mathbb{C}$ has domain $A \subset \S_{L,K}$ if 
 for each pair of configurations
$ \boldsymbol{s},  \boldsymbol{s}^\prime \in \Omega_s$
such that $s_x = s_x^\prime$ for each $x \in A$, 
$f( \boldsymbol{s} ) = f( \boldsymbol{s}^\prime )$.
\end{defn}

For example, the spin function $S_x^k$,  $k=1,2$,  defined in (\ref{eq:spinvariable}), 
has domain $\{x\}$.

\begin{defn}[Torus halves and $\mathcal{A}^{\pm}$-functions ]
We denote by  $\S_{L,K}^\pm$  two disjoint, connected,  torus halves,
$\S_{L,K}^\pm$,
 such that $\Theta(  \S_{L,K}^\pm )
= \S_{L,K}^\mp$.

We let $\mathcal{A}^\pm$ be the set of functions $f : \Omega_s \mapsto \mathbb{C}$ which have domain  $\S_{L,K}^\pm$.
\end{defn}

Our spin variables satisfy the following identity, which is crucial for establishing reflection positivity of the spin measure: for each $x \in \mathbb{S}_{L,K}$,
\begin{equation}\label{eq:complexalternation}
\Theta(S_x) = \overline{S_{\Theta(x)}} = (\overline{S^1_{\Theta(x)}}, \overline{S^2_{\Theta(x)}}).
\end{equation}
The validity of~\eqref{eq:complexalternation} relies on the fact that the spin variables were defined in (\ref{eq:spinvariable})
with a complex chessboard  property, 
see Remark~\ref{remark}.
This condition further implies that the functions $\gamma_z$,
appearing in the definition of the spin measure and being defined in (\ref{eq:gfunction}),
satisfy for each $x \in \mathbb{S}_{L,K}$,
\begin{equation}\label{eq:complealternationgamma}
\Theta(\gamma_x) = \overline{  \gamma_{\Theta(x)}   }.
\end{equation}
Given these  key properties of the spin representation,  the proof strategy  is classical and can be found in several papers about reflection positivity, see for example \cite[Chapter 10]{Velenik}.
\begin{prop}[Reflection positivity]\label{prop:reflectionpos}
For any  $f, g \in \mathcal{A}^+$ we have 
\begin{enumerate}[(i)]
\item  $\langle  f  \,  \,  \overline{ \Theta g }    \rangle_{L, K, \beta, h,r   }  = { \langle   \overline{ g }  \,   \, \Theta f \rangle  }_{L, K, \beta, h, r   },$
\item $ \langle f \,  \overline {  \Theta f} \rangle_{L, K, \beta, h, r   }   \geq 0$.
\end{enumerate}
This in turn implies that 
\begin{equation}\label{eq:RPstatement}
{ \Big ( { {Re} \,   \langle f  \, \overline{  \Theta g }  \,  \rangle  }_{L, K, \beta, h, r   }  \Big)}^2  \leq  \langle \, f  \,  \overline{ \Theta f}  \rangle_{L, K, \beta, h, r   }    \,  \langle g  \,  \overline{ \Theta g} \rangle_{L, K, \beta, h, r   } .
\end{equation}
\end{prop}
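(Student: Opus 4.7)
The strategy is an Osterwalder--Schrader-type ``square'' decomposition of the complex weight $W(\boldsymbol{s}) := \boldsymbol{g}(\boldsymbol{s})\,e^{\beta H(\boldsymbol{s})}$, adapted to the complex setting. I would first show that
\[
W(\boldsymbol{s}) \;=\; \sum_{\alpha} c_\alpha\, F_\alpha(\boldsymbol{s})\, \overline{\Theta F_\alpha(\boldsymbol{s})},
\]
with non-negative coefficients $c_\alpha\ge 0$ and functions $F_\alpha \in \mathcal{A}^+$. Compared with the classical real case, the new feature is that the factor living on $\S_{L,K}^-$ comes paired with complex conjugation, which is precisely what produces the conjugates that appear in the statement. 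Once this decomposition is available, (i), (ii) and \eqref{eq:RPstatement} all follow by routine integration using the product structure of the Lebesgue measure $\boldsymbol{ds} = d\boldsymbol{s}_+\,d\boldsymbol{s}_-$, where $\boldsymbol{s}_\pm := (s_x)_{x \in \S_{L,K}^\pm}$.

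To establish the decomposition I would split $H = H_+^{int} + H_-^{int} + H_{cross}$ according to whether each edge (and each single-site field term) lies entirely in $\S_{L,K}^+$, entirely in $\S_{L,K}^-$, or crosses the reflection plane. Because the plane passes through edge midpoints, $\Theta$ swaps vertex parities, sending $V^e \cap \S_{L,K}^+$ to $V^o \cap \S_{L,K}^-$ and vice versa. Writing $\boldsymbol{g} = \boldsymbol{g}_+ \boldsymbol{g}_-$ as the obvious product over $V \cap \S_{L,K}^\pm$, the parity swap interchanges the $g$ and $\overline{g}$ factors in \eqref{eq:alternation} and therefore yields $\boldsymbol{g}_- = \overline{\Theta\,\boldsymbol{g}_+}$. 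A term-by-term check on an internal edge contribution $e^{i(s_i^1-s_j^1)}+e^{-i(s_i^2-s_j^2)}$, using that the ``even first, odd second'' convention of $H$ is reversed on the reflected edge, then gives $\Theta H_+^{int} = \overline{H_-^{int}}$; combining these two identities produces $\boldsymbol{g}_-\,e^{\beta H_-^{int}} = \overline{\Theta(\boldsymbol{g}_+\,e^{\beta H_+^{int}})}$.

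For the crossing edges, on each $\{i,j\}$ with $j=\Theta(i)$ and $i \in V^e \cap \S_{L,K}^+$ I would rewrite the two contributions as $\phi_e\,\overline{\Theta\phi_e}$ with $\phi_e := e^{is_i^1}\in\mathcal{A}^+$ and $\psi_e\,\overline{\Theta\psi_e}$ with $\psi_e := e^{-is_i^2}\in\mathcal{A}^+$. Since the map $G\mapsto G\,\overline{\Theta G}$ is multiplicative, expanding $e^{\beta H_{cross}}$ in its absolutely convergent Taylor series represents it as $\sum_\alpha c_\alpha G_\alpha\,\overline{\Theta G_\alpha}$ with $c_\alpha\ge 0$ and $G_\alpha \in \mathcal{A}^+$; setting $F_\alpha := \boldsymbol{g}_+\,e^{\beta H_+^{int}}\, G_\alpha$ then delivers the announced decomposition of $W$.

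With the decomposition in hand, the three conclusions follow from a single calculation: for every $f,g \in \mathcal{A}^+$,
\[
Z\,\langle f\,\overline{\Theta g}\rangle \;=\; \sum_\alpha c_\alpha \int (fF_\alpha)(\boldsymbol{s})\,\overline{\Theta(gF_\alpha)(\boldsymbol{s})}\,\boldsymbol{ds} \;=\; \sum_\alpha c_\alpha\,I_\alpha(f)\,\overline{I_\alpha(g)},\qquad I_\alpha(f):=\int (fF_\alpha)(\boldsymbol{s}_+)\,d\boldsymbol{s}_+,
\]
where the last equality uses that $fF_\alpha$ depends only on $\boldsymbol{s}_+$ together with the change of variable $\boldsymbol{s}_-\mapsto\Theta\boldsymbol{s}_-$ in the integral over $\S_{L,K}^-$. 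Specialising $f=g$ delivers (ii); specialising $f\equiv g\equiv 1$ shows $Z\ge 0$ (in fact $Z>0$ under the standing hypotheses on $U$, so the expectation is well-defined); the formula is manifestly Hermitian in $(f,g)$, which, combined with the elementary identity $\overline{W}=\Theta W$ (proved by the change of variable $\boldsymbol{s}\mapsto\Theta\boldsymbol{s}$), yields (i); and Cauchy--Schwarz for the non-negative Hermitian form gives $|\langle f\,\overline{\Theta g}\rangle|^2 \le \langle f\,\overline{\Theta f}\rangle\,\langle g\,\overline{\Theta g}\rangle$, from which \eqref{eq:RPstatement} follows via $(\mathrm{Re}\,z)^2 \le |z|^2$. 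The main obstacle is the bookkeeping in the factorisation of $W$: verifying that the parity swap produced by the midpoint reflection, the $g$/$\overline{g}$ alternation in $\boldsymbol{g}$, and the $+i$/$-i$ alternation between the two sums in $H$ conspire to produce exactly the right complex conjugations on the reflected side. This is where the non-positive (complex) nature of the measure is absorbed cleanly and where the proof departs from the classical real-measure treatment.
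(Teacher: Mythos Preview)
Your approach is essentially identical to the paper's: both split $W$ into the two half-space pieces and the crossing part, verify that the parity swap induced by a midpoint reflection turns the $\S^-$ piece into the conjugate-reflection of the $\S^+$ piece, Taylor-expand the crossing exponential to obtain a sum of terms of the form $h\,\overline{\Theta h}$ with non-negative coefficients, and then read off positivity and Cauchy--Schwarz from the product structure of $d\boldsymbol{s}$. Your write-up is in fact a bit more explicit than the paper's about the bookkeeping (notably the identities $\boldsymbol{g}_- = \overline{\Theta\boldsymbol{g}_+}$ and $\Theta H_+^{int} = \overline{H_-^{int}}$), but the argument is the same.
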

\begin{proof}
Let us define for any measurable function $f: \Omega_s \mapsto \mathbb{C}$
the operator 
$$
\langle f \rangle_0  := \int_{\Omega_s}  f(\boldsymbol{s})\, d \boldsymbol{s}.
$$
Being $\langle \, \,  \rangle_0 $ a product measure it is easy to see that,
the measure is reflection positive,  namely 
for $f, g \in \mathcal{A}^+$ 
\begin{equation}\label{eq:positivity}
\langle f  \overline{ \Theta(f)}  \rangle_0  \geq 0,
\end{equation}
and  
\begin{equation}\label{eq:symmetry}
\langle f  \, \,  \overline{ \Theta(g)}  \rangle_0   = 
\langle \overline{ g  }  \, \,  { \Theta(f)}  \rangle_0,
\end{equation}
(see   \cite[Chapter 10]{Velenik}).
We now define for any measurable function $f: \Omega_s \mapsto \mathbb{C}$
the operator 
$$
\langle f \rangle_1  := \int_{\Omega_s}  f(\boldsymbol{s}   )\,   \boldsymbol{\gamma}(\boldsymbol{s})  d \boldsymbol{s},
$$
where $\boldsymbol{\gamma}(\boldsymbol{s}) = \prod_{z \in \S_{L,K}} \gamma_z(\boldsymbol{s})$
and $\gamma_z(\boldsymbol{s})$ has been defined in (\ref{eq:gfunction}).
Define also 
$$
\boldsymbol{\gamma}^{\pm}(\boldsymbol{s}) = \prod_{z \in \S^{\pm}_{L,K}} \gamma_z(\boldsymbol{s}).
$$

We see that 
$\boldsymbol{\gamma}^{\pm} \in \mathcal{A}^\pm$ and that,
since (\ref{eq:complealternationgamma}) holds, 
$
\Theta(  \boldsymbol{\gamma}^{\pm} )  = \overline{ \boldsymbol{\gamma}^{\mp}}. 
$
Hence, since $\langle \,  \,  \rangle_0$ is reflection positive, we deduce that also 
$\langle  \, \, \rangle_1$  is reflection positive,
namely for any $f, g \in \mathcal{A}^+$
\begin{equation}\label{eq:positivity1}
\langle f  \overline{ \Theta(f)}  \rangle_1 =  
\langle  (  f  \boldsymbol{\gamma}^{+} )    \, \,  \overline{ \Theta \big (f  \boldsymbol{\gamma}^{+}  \big ) }   \rangle_0   \geq 0,
\end{equation}
and 
\begin{equation}\label{eq:symmetry1}
\langle f  \, \,  \overline{ \Theta(g)}  \rangle_1   = 
\langle  (  f  \boldsymbol{\gamma}^{+} )    \, \,  \overline{ { \Theta \big (g  \boldsymbol{\gamma}^{+}}} \big )   \rangle_0 =   
\langle  { \overline{  \big (g  \boldsymbol{\gamma}^{+}}} \big )   \, \, 
\Theta \big ( {f  \boldsymbol{\gamma}^{+}  }   \big )   \, \,    \rangle_0 = 
\langle \overline{ g  }  \, \,  { \Theta(f)}  \rangle_1.
\end{equation}

We now prove that  the operator 
$\langle   \, \,   \rangle_{L, K, \beta, h,r   }$
is reflection positive. 
 For this,   let $E^{\pm}$ be the subset of
edges of $\S_{L,K}$ which are entirely contained in  $\S_{L,K}^{\pm}$
and $E_R$ be the set of edges
intersecting both  $\S_{L,K}^{-}$ and  $\S_{L,K}^{+}$.
We observe that we can write
for any $f, g \in \mathcal{A}^+$ 
\begin{equation}\label{eq:RP1}
\langle  f  \,  \,  \overline{ \Theta g }    \rangle_{L, K, \beta, h,r   } 
= \langle  f  \, \, \overline{ \Theta g } \, \, e^{  H_+      }   e^{  H_-} e^{  \beta  \sum_{\small \{x,y\} \in E_R}   S_x^1  \, \,  
\overline{ \Theta ( S_x^1  )  }}
e^{ \beta  \sum_{  \small \{x,y\} \in E_R}    S_x^2  \, \,   \overline{\Theta ( S_x^2 )  }}
\rangle_1
\end{equation}
 where  
 $$
 H_\pm(\boldsymbol{s})  : = 
\beta  \sum\limits_{  \{i, j\} \in E^{\pm}  }  S_i^1 S_j^1  +   \beta   \sum\limits_{  \{i, j\} \in E^{\pm} }   S_i^2 S_j^2 
+  \,  \beta \, h \sum\limits_{k \in \S_{L,K}^\pm } \, 2 \, \cos(r s^1_k)
 $$
 and we used the convention that if $\{x,y\} \in E_R$, then $x \in \S_{L,K}^+$
 and $y \in \S_{L,K}^-$.
 Noting that $e^{ H_\pm}  =  \overline{\Theta e^{
  H_\mp}}$,  expanding the last two exponentials in (\ref{eq:RP1}) 
  we obtain
  \begin{multline}
  \langle  f  \,  \,  \overline{ \Theta g }    \rangle_{L, K, \beta, h,r   }
  \\ =  \sum\limits_{m^1, m^2  \in  {\mathbb{N}_0}^{E_R} } 
  \Big  \langle  f    \, \, e^{  H_+      }  
 \,   \overline{  \Theta \Big ( 
   g \,   e^{  H_+}  \,  \Big ) }
   \prod_{ \{x,y\} \in E_R }  
 \frac{  \big (    \beta  \, S_x^1 \,   \, \,   \overline{   \Theta (  S_x^1 } )  \Big )^{m_{\{x,y\}}^1}}{m_{\{x,y\}}^1!}
 \frac{\big ( \beta   S_x^2   \, \,  \overline{  \Theta (  S_x^2 } )  \Big )^{m_{\{x,y\}}^2}}{m^2_{\{x,y\}}!}
\Big \rangle_1.
  \end{multline}
We deduce from~(\ref{eq:positivity1}) that, for general measurable functions 
$f, g \in \mathcal{A}^+$, each term in the previous sum can be written as 
$
\langle h \, \overline{\Theta(h')} \rangle_1
$
for some $h, h' \in \mathcal{A}^+$.
Using the symmetry property~(\ref{eq:symmetry1}), we have
$
\langle h \, \overline{\Theta(h')} \rangle_1
=
\langle \overline{h'} \, \Theta(h) \rangle_1.
$
Summing over all terms then yields the first claim of the proposition.
Moreover, if $f = g$, we deduce that each term in the previous sum can be written 
as  $   \langle h \, \overline{\Theta(h)} \rangle_1  $
for some $h  \in \mathcal{A}^+$, and is then non-negative by (\ref{eq:positivity1}).
This gives the second claim and concludes the proof. 
\end{proof}

\subsection{Convexity and monotonicity of two-point functions}

Reflection positivity is useful, as it allows one to derive monotonicity properties of two-point functions,
such us the functions in the RHS of (\ref{eq:twopoint1}) and (\ref{eq:twopoint2}). 
Such functions can be expressed as averages of products of two components of a reflection invariant vector function, defined below.  

\begin{defn}[Reflection invariant vector function]
A family of functions 
\[ F = (F_x)_{x \in \S_{L,K}}, \qquad F_x : \Xi \to \mathbb{R}, \]
is called \textit{reflection invariant} if for all $x,y,z \in \S_{L,K}$ the following hold:
\begin{enumerate}[(i)]
\item $\Theta F_x = F_{\Theta x}$ for any reflection $\Theta$ through edges,
\item $\langle F_x F_y \rangle = \langle F_{x+z} F_{y+z} \rangle \geq 0$,  
where addition is understood with respect to the torus metric.
\end{enumerate}
\end{defn}
The next lemma will allow us to derive convexity properties for  two-point functions.
\begin{lem} 
\label{lemma:key1}
Consider any reflection plane $\mathcal{S}$,   let $\Theta$ be the corresponding reflection operator and  $\S_{L,K}^{\pm} \subset \S_{L,K}$  the corresponding torus halves
as in Section \ref{sect:RP}.
Let $(F_x)_{x \in \S_{L,K}}$ 
be a reflection invariant vector of functions such that
$F_o$ has domain $D \subset \S_{L,K}$, with $o \in D$,
and  define $\ell := \sup \{  d(o,x) : x \in D  \}$.
Let $Q\subset \S_{L,K}$,  $Q^\pm := (Q\cap \S_{L,K}^\pm) \cup \Theta (Q\cap \S_{L,K}^\pm) $,   assume that $|Q^+| = |Q^-|$ and  that $d_H(Q^\pm,  \mathcal S) >\ell$,
where $d_H$ is the Hausdorff distance with respect to the $L^2$ norm.  
Then,
\begin{align}\label{eq:keylemma1}
2\sum\limits_{ \substack{ x,y\in Q: \\  x\neq y}} \langle F_x  F_y \rangle_{L, K, \beta, h, r} \leq
\sum\limits_{ \substack{ x,y\in Q^+: \\  x\neq y}}  \langle F_x F_y \rangle_{L, K, \beta, h, r}
+
\sum\limits_{ \substack{ x,y\in Q^-: \\  x\neq y}}  \langle F_x F_y \rangle_{L, K, \beta, h, r}.
\end{align}
\end{lem}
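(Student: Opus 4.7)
The plan is to split both sides of (\ref{eq:keylemma1}) according to which half-torus each summation index lies in, use the $\Theta$-symmetry of $Q^{\pm}$ to collect redundant terms, and then reduce the whole inequality to a single Cauchy--Schwarz estimate coming from reflection positivity.

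Write $Q_{\pm} := Q \cap \S_{L,K}^{\pm}$, so that $Q^{\pm} = Q_{\pm} \cup \Theta Q_{\pm}$ is a disjoint union, and introduce $G_{\pm} := \sum_{x \in Q_{\pm}} F_x$. Splitting the LHS of (\ref{eq:keylemma1}) by the parity of $x,y$ gives
\[
2\sum_{\substack{x,y\in Q_+\\ x\neq y}}\langle F_x F_y\rangle + 2\sum_{\substack{x,y\in Q_-\\ x\neq y}}\langle F_x F_y\rangle + 4\langle G_+ G_-\rangle.
\]
Using condition (i) of reflection invariance together with the $\Theta$-invariance of $\langle\,\cdot\,\rangle$ (Proposition \ref{prop:reflectionpos}(i)), one has $\langle F_{\Theta x} F_{\Theta y}\rangle = \langle F_x F_y\rangle$, and expanding the two sums on the RHS of (\ref{eq:keylemma1}) yields
\[
\sum_{\substack{x,y\in Q^{+}\\ x\neq y}}\langle F_x F_y\rangle = 2\sum_{\substack{x,y\in Q_+\\ x\neq y}}\langle F_x F_y\rangle + 2\langle G_+\cdot\Theta G_+\rangle,
\]
and analogously for $Q^-$. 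Subtracting the common same-side contributions reduces (\ref{eq:keylemma1}) to the single inequality
\[
2\langle G_+ G_-\rangle \;\leq\; \langle G_+\cdot\Theta G_+\rangle + \langle G_-\cdot\Theta G_-\rangle.
\]

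To close the argument, the hypothesis $d_H(Q^{\pm},\mathcal{S}) > \ell$, combined with the (implicit) translate structure of the $F_x$, ensures that $G_+ \in \mathcal{A}^+$ and $G_- \in \mathcal{A}^-$, so $\Theta G_- \in \mathcal{A}^+$. Setting $f := G_+$ and $g := \Theta G_-$, both real-valued functions in $\mathcal{A}^+$, the Cauchy--Schwarz estimate (\ref{eq:RPstatement}) gives
\[
\bigl(\mathrm{Re}\,\langle f\cdot\overline{\Theta g}\rangle\bigr)^2 \leq \langle f\cdot \overline{\Theta f}\rangle\,\langle g\cdot \overline{\Theta g}\rangle.
\]
Since $F_x$ is real one has $\overline{\Theta(\cdot)} = \Theta(\cdot)$; using $\Theta^2 = \mathrm{Id}$ the right-hand side becomes $\langle G_+\cdot\Theta G_+\rangle\,\langle G_-\cdot\Theta G_-\rangle$; and by condition (ii) of reflection invariance $\langle G_+ G_-\rangle$ is a real non-negative number, so the $\mathrm{Re}$ on the left is redundant. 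One application of AM--GM then produces the required inequality $2\langle G_+ G_-\rangle \leq \langle G_+\Theta G_+\rangle + \langle G_-\Theta G_-\rangle$ and concludes the proof.

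The main obstacle I expect is the geometric bookkeeping needed to justify $G_{\pm}\in\mathcal{A}^{\pm}$: the length $\ell$ is defined via the graph metric while the hypothesis is phrased through the $L^2$-Hausdorff distance, and the statement tacitly assumes that the ``domain'' of $F_x$ behaves as the translate by $x - o$ of the domain of $F_o$. Modulo this setup, everything else is a purely algebraic reorganisation of the two sums followed by one use of reflection positivity and AM--GM.
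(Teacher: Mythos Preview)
Your proof is correct and takes a genuinely different route from the paper's. The paper applies reflection positivity to the product $\prod_{x\in Q}(1+\varphi F_x)$ for a small real parameter $\varphi$, obtaining
\[
\Big\langle \prod_{x\in Q}(1+\varphi F_x)\Big\rangle^2 \leq \Big\langle \prod_{x\in Q^+}(1+\varphi F_x)\Big\rangle\,\Big\langle \prod_{x\in Q^-}(1+\varphi F_x)\Big\rangle,
\]
and then extracts (\ref{eq:keylemma1}) by comparing the $O(\varphi^2)$ terms as $\varphi\to 0$. Your argument instead splits both sides of (\ref{eq:keylemma1}) directly according to $Q_{\pm}$, cancels the common same-side sums, and reduces everything to the single estimate $2\langle G_+G_-\rangle \leq \langle G_+\Theta G_+\rangle + \langle G_-\Theta G_-\rangle$, which you then obtain from one application of (\ref{eq:RPstatement}) and AM--GM. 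Your approach is more elementary in that it avoids the perturbative limit and the implicit matching of lower-order terms (which in the paper's argument requires $|Q|=|Q^+|=|Q^-|$ and constancy of the one-point function to make the $O(1)$ and $O(\varphi)$ coefficients agree). The paper's generating-function trick, on the other hand, is more flexible if one ever wants higher-order inequalities, since the same product bound encodes comparisons at every order in $\varphi$ simultaneously. The geometric concern you flag about $G_\pm\in\mathcal A^\pm$ is present in both arguments and is indeed handled only implicitly in the paper.
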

\begin{proof}
Let $\varphi >0$ be an arbitrary real number.   
From Proposition \ref{prop:reflectionpos} we deduce that, 
$$
 \langle      \prod_{  x \in Q    }   (  1 + \varphi F_x  ) \rangle^2 \leq 
 \langle      \prod_{  x \in Q^+   }   (  1 + \varphi F_x  ) \rangle 
  \langle      \prod_{  x \in Q^-     }   (  1 + \varphi F_x  ) \rangle.
$$
  Since the inequality holds for each $\varphi$, we perform an expansion in the limit as $\varphi \rightarrow 0$ and we obtain
    (\ref{eq:keylemma1}) from a comparison of the terms  $O(\varphi^2)$.
\end{proof}

The next proposition derives a convexity property along the Cartesian axis for two point functions.
Our result holds for two-point functions which can be expressed as the expectation of the product of two functions belonging to a reflection invariant vector.

\begin{prop}[Convexity]
\label{prop:convexity}
Let $F = (F_x)_{x \in \S_{L,K}}$ be a reflection invariant vector of functions
satisfying the assumptions in Lemma \ref{lemma:key1}.
Define for any $x \in \S_{L,K}$
\begin{equation}\label{eq:expressiontwopoint}
\mathcal{G}(x) :=  \langle F_o F_x  \rangle_{L, K, \beta, h, r}
\end{equation}
Let $x_n$ correspond to the vertex $(n, 0, 0)$ or to the vertex $(0,n, 0)$,
let $y_k$ correspond to the vertex $(0,0,k)$. 
For odd vertices $n, k$ such that  $ \min\{2\ell+1, 3\} \leq n \leq  L-  \min\{2\ell+1, 3\} $
and $ \min\{2\ell+1, 3\} \leq k \leq  K-  \min\{2\ell+1, 3\} $
 we have that, 
\begin{equation}\label{eq:convexity}
\begin{split}
2  \,  \mathcal{G}  (  x_{n}  )  &  \leq 
\mathcal{G}  (   x_{n+2}   ) + \mathcal{G}  (  x_{n-2}  ),  \\
2  \,  \mathcal{G}  (  y_{k}  )  &  \leq 
\mathcal{G}  (   y_{k+2}   ) + \mathcal{G}  (  y_{k-2}  ).
\end{split} 
\end{equation}
\end{prop}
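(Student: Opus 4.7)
The plan is to apply Lemma \ref{lemma:key1} with the two-element set $Q=\{o,x_n\}$, choosing as reflection hyperplane $\mathcal{S}$ the one orthogonal to $e_1$ at height $x_1=(n+2)/2$. Since $n$ is odd this height is a half-integer, so $\mathcal{S}$ intersects only edge midpoints, as required by the setup of Section \ref{sect:RP}. The associated reflection sends $o\mapsto x_{n+2}$ and $x_n\mapsto x_2$; placing $o$ in $\S^+_{L,K}$ accordingly places $x_n$ in $\S^-_{L,K}$, so that $Q^+=\{o,x_{n+2}\}$ and $Q^-=\{x_n,x_2\}$.

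With these choices the left-hand side of (\ref{eq:keylemma1}) reduces to $4\langle F_oF_{x_n}\rangle=4\mathcal{G}(x_n)$, while the right-hand side becomes $2\langle F_oF_{x_{n+2}}\rangle+2\langle F_{x_n}F_{x_2}\rangle$. The translation invariance built into condition (ii) rewrites the second term as $2\mathcal{G}(x_{2-n})$, and the elementary symmetry $\mathcal{G}(x)=\mathcal{G}(-x)$ (itself an immediate consequence of (ii) together with commutativity of the product of real-valued functions) identifies it with $2\mathcal{G}(x_{n-2})$. Dividing by two yields the first inequality of (\ref{eq:convexity}). The second inequality is obtained by exactly the same argument, now reflecting through the hyperplane orthogonal to $e_3$ at height $x_3=(k+2)/2$, which is legitimate because $K\in 2\mathbb{N}\cup\{1\}$ guarantees that the slab torus is invariant under such a reflection.

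What remains is to check that the hypotheses of Lemma \ref{lemma:key1} are satisfied. The cardinality condition $|Q^+|=|Q^-|=2$ holds provided $x_{n+2}$ and $x_2$ are distinct from $o$ and $x_n$ modulo the torus identifications; this is what the upper bound $n\leq L-\min\{2\ell+1,3\}$ prevents from failing. Both points of $Q^\pm$ lie at distance $(n+2)/2$ from $\mathcal{S}$, so the Hausdorff-distance requirement $\mathrm{d}_H(Q^\pm,\mathcal{S})>\ell$ is precisely the content of the lower bound $n\geq \min\{2\ell+1,3\}$ in the relevant range of $\ell$. I do not expect any genuine obstacle at this step: the substantive analytic work has already been packaged into Proposition \ref{prop:reflectionpos} (reflection positivity for our complex measure) and into the $\varphi$-expansion underlying Lemma \ref{lemma:key1}, and the present argument is pure bookkeeping of the reflected two-point configurations together with a single use of translation invariance.
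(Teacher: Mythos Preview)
Your proof is correct and follows the same approach as the paper: apply Lemma~\ref{lemma:key1} with the two-point set $Q=\{o,x_n\}$ and a reflection plane perpendicular to the relevant axis. The only cosmetic difference is the placement of the plane --- you put it at $x_1=(n+2)/2$, whereas the paper uses $x_1=(n-2)/2$; by translation invariance and the symmetry $\mathcal G(x)=\mathcal G(-x)$ both choices produce the same inequality. Your write-up actually spells out the identification of $Q^\pm$ and the use of translation invariance more explicitly than the paper does. One small point: on the torus the reflection fixes \emph{two} parallel hyperplanes, so when checking $d_H(Q^\pm,\mathcal S)>\ell$ you should in principle also bound the distance to the second plane at $x_1=(n+2)/2-L/2$; this is where the upper bound on $n$ enters, not only the cardinality condition.
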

\begin{proof}
The proposition follows  from am immediate application of Lemma \ref{lemma:key1} with
$\mathcal{S} = \{ (x_1, x_2, x_3) \in \S_{L,K} \, : \, x_1 = \frac{n}{2} - 1  \}  $
and
 $Q= \{ (0,0, 0),  (n,0, 0)\}$, or 
 $\mathcal{S} = \{ (x_1, x_2, x_3)  \in \S_{L,K} \, : \, x_2 =  \frac{n}{2} - 1  \}  $
 and
$Q= \{(0, 0, 0 ), (0,n, 0 )\}$
or 
$\mathcal{S} = \{ (x_1, x_2, x_3) \in \S_{L,K} \, : \, x_3 = \frac{k}{2} - 1  \}  $
and 
$Q= \{(0, 0, 0 ), (0,0, k)\}$.
\end{proof}
The next property is a consequence of convexity,
namely it is a monotonicity property for the two-point function.
\begin{prop}[Monotonicity]
\label{prop:monotonicity}
Under the same assumptions as in Proposition \ref{prop:convexity} we have that,
for any odd integer $n \in [0, \frac{L}{2}-1)$ and $k \in  [0, \frac{K}{2}-1)$,
$$
\mathcal{G}(x_{n}) \geq \mathcal{G}(x_{n+2}) \quad \quad \mathcal{G}(x_{k}) \geq \mathcal{G}(x_{k+2}).
$$
\end{prop}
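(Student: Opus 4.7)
The strategy is to combine the convexity of Proposition~\ref{prop:convexity} with the natural reflection symmetry of the torus, and conclude that a convex sequence symmetric about its midpoint must be non-increasing on the first half.

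First, the translation invariance built into condition (ii) of the reflection-invariance definition of $(F_x)_{x \in \S_{L,K}}$, applied with $z = -x_n$, gives
$$
\mathcal{G}(x_n) \;=\; \langle F_o\, F_{x_n}\rangle_{L,K,\beta,h,r} \;=\; \langle F_{-x_n}\, F_o\rangle_{L,K,\beta,h,r} \;=\; \mathcal{G}(x_{-n}),
$$
and on the torus $-x_n$ is identified with $x_{L-n}$. Hence $\mathcal{G}(x_n) = \mathcal{G}(x_{L-n})$, and analogously $\mathcal{G}(y_k) = \mathcal{G}(y_{K-k})$ in the third coordinate.

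Next, I introduce the sequence $a_j := \mathcal{G}(x_{2j+1})$ for $j = 0, 1, \dots, L/2 - 1$, which enumerates the values of $\mathcal{G}(x_n)$ at odd $n$. Proposition~\ref{prop:convexity} yields the convexity inequality $2 a_j \le a_{j-1} + a_{j+1}$ for every $j$ in an interior range of the form $[j_\star, L/2 - 1 - j_\star]$ (with $j_\star \le 1$ depending on $\ell$), while the symmetry derived above reads $a_j = a_{L/2 - 1 - j}$. Considering the increments $d_j := a_{j+1} - a_j$, convexity is the statement that $d_j$ is non-decreasing in $j$, whereas the symmetry implies the antisymmetry
$$
d_{L/2 - 2 - j} \;=\; a_{L/2 - 1 - j} - a_{L/2 - 2 - j} \;=\; a_j - a_{j+1} \;=\; -d_j,
$$
i.e., $(d_j)$ is antisymmetric about the midpoint $(L/2 - 2)/2$. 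A non-decreasing sequence that is antisymmetric about its midpoint is automatically $\le 0$ on the first half and $\ge 0$ on the second; hence $d_j \le 0$ for every $j \le \lfloor (L/2 - 2)/2 \rfloor$, which unwinds to $\mathcal{G}(x_n) \ge \mathcal{G}(x_{n+2})$ for every odd $n \in [0, L/2 - 1)$. Applying the same argument in the second Cartesian direction gives the inequality for the other type of $x_n$, and applying it verbatim to the third direction (using the convexity statement for $y_k$ in Proposition~\ref{prop:convexity}) yields $\mathcal{G}(y_k) \ge \mathcal{G}(y_{k+2})$.

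The only real point to check is the compatibility between the convexity range and the symmetry: because the convexity in Proposition~\ref{prop:convexity} holds for all interior odd coordinates up to a constant-sized boundary buffer, the non-decreasing property of $(d_j)$ passes through the antisymmetry pivot, which is exactly what forces the sign. Everything else is elementary bookkeeping.
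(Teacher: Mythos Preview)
Your argument is correct: the symmetry $\mathcal G(x_n)=\mathcal G(x_{L-n})$ from translation invariance, combined with the convexity of Proposition~\ref{prop:convexity} on the range $3\le n\le L-3$ (the buffer $\min\{2\ell+1,3\}$ is always $\le 3$, so $j_\star\le 1$ as you say), makes the increments $d_j$ non-decreasing over the whole index range and antisymmetric about the midpoint, forcing $d_j\le 0$ on the first half. The paper does not give a self-contained proof but defers to \cite{L-T-first} and \cite[Theorem~2.1]{L-T-general}; the argument there is of the same convexity-plus-symmetry type, so your route matches.
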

For the proof of the proposition (given the analogous of Proposition \ref{prop:reflectionpos})
we refer to \cite{LeesTaggiCMP2020}.

\subsection{Chessboard estimates}

A classical consequence of reflection positivity is the chessboard estimate.
Let $f$ be a function having domain $\{o\}$,
let $e_1, e_2 \ldots e_k$ a sequence of edges of the graph $\S_{L,K}$  forming a self-avoiding path from $o$ to $t \in \S_{L,K}$,
let $\Theta_1, \ldots \Theta_k$ be a sequence of reflection
through edges such that  $\Theta_i$ is a reflection with respect to the plane
orthogonal to $e_i$ and intersecting the midpoint of such an edge
for each $i \in \{1, \ldots, k\}$.
Define, 
$$
f^{t} : = \begin{cases}
\overline{\Theta_k  \circ \Theta_{k-1} \, \ldots \, \circ \Theta_1 \, ( f )} &  \mbox{ if $k$ is even,} \\
\Theta_k  \circ \Theta_{k-1} \, \ldots \, \circ \Theta_1 \, ( f ) &  \mbox{ if $k$ is odd.}
\end{cases}
$$
Observe that the function $f^{t}$ does not depend on the chosen path
and has domain $t$. 
\begin{prop}[Chessboard estimate]\label{prop:chessboardabstract}
Let $f = (f_t)_{t \in \S_{L,K}}$ be a sequence of  real expectation   functions
having domain  $\{o\}$.  
We have that,
$$
\langle   \prod_{t \in \S_{L,K}} f^{t}_t   \rangle_{L, K, \beta, h, r}  \, \leq \, 
\prod_{t \in \S_{L,K}} \, \, 
{ \big  \langle 
\,    \prod_{s \in \S_{L,K}} f_t^{s}   \,  \big  \rangle_{L, K, \beta, h, r} }^{\frac{1}{|\S_{L,K}|}}
$$
\end{prop}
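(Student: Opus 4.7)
My plan is to prove the estimate by iterating the reflection positivity bound \eqref{eq:RPstatement} through a sequence of bisecting reflection planes, following the classical chessboard argument of Fr\"ohlich, Israel, Lieb and Simon. The parity-dependent conjugation built into the definition of $f^t$ is precisely what will make each inductive step compatible with reflection positivity.

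The main inductive step goes as follows. Fix a reflection plane $\mathcal{S}$ with operator $\Theta$ and torus halves $\S_{L,K}^\pm$. Given any assignment $t \mapsto g_t$ with $g_t$ a good function of domain $\{t\}$, factor
\[
\prod_{t \in \S_{L,K}} g_t \;=\; A \cdot \overline{\Theta B},
\]
where $A = \prod_{t \in \S_{L,K}^+} g_t \in \mathcal{A}^+$ and $B \in \mathcal{A}^+$ is chosen so that $\overline{\Theta B} = \prod_{t \in \S_{L,K}^-} g_t$ (this amounts to reflecting $g_{\Theta t}$ back into $\S_{L,K}^+$ with the correct conjugation). Applying \eqref{eq:RPstatement} yields
\[
\Bigl\langle \prod_t g_t \Bigr\rangle \;\leq\; \bigl\langle A\, \overline{\Theta A}\bigr\rangle^{1/2}\,\bigl\langle B\, \overline{\Theta B}\bigr\rangle^{1/2}.
\]
Crucially, each factor on the right is again of the form $\langle \prod_t g'_t\rangle$, where $g'_t = g_t$ on $\S_{L,K}^+$ and $g'_t$ is the $\Theta$-reflected (appropriately conjugated) version of $g_{\Theta t}$ on $\S_{L,K}^-$: one step of the induction symmetrises the input sequence across $\mathcal{S}$.

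Starting from $g_t = f_t^t$, I would iterate this symmetrisation dyadically: first bisect along coordinate direction~$1$, then each half along direction~$1$, for a total of $\log_2 L$ bisections; repeat along direction~$2$ ($\log_2 L$ more) and direction~$3$ ($\log_2 K$ more). The assumption $L,K \in 2\mathbb{N} \cup \{1\}$ guarantees that all bisections are well defined. At every stage the running expectation has the form $\langle \prod_t h_t\rangle$ with $h_t = f_{s(t)}^t$ for some site $s(t)$ determined by the branch of the binary bisection tree, and each bisection replaces this by a geometric mean of two such expectations. After all $\log_2 |\S_{L,K}|$ bisections, within each of the $|\S_{L,K}|$ leaf branches the index $s(t)$ becomes constant in $t$, and each $s \in \S_{L,K}$ appears as the constant value in exactly one branch; multiplying the resulting $|\S_{L,K}|$-th root factors produces the claimed inequality.

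The main obstacle I expect is the bookkeeping of the parity-dependent conjugation in $f^t$. Specifically, one needs to verify that reflecting $f^{t'}$ through any reflection plane $\mathcal{S}$, with the appropriate conjugation, yields exactly $f^{\Theta t'}$; this compatibility with $\Theta$ is what ensures that the functions $A$ and $B$ in the inductive step genuinely lie in $\mathcal{A}^+$, and that the symmetrised product $\langle A \overline{\Theta A}\rangle$ retains the same form required to continue the iteration. Once this reflection compatibility is verified from the even/odd rule in the definition of $f^t$ (which mirrors the way $\Theta$ flips the parity of the graph distance from $o$), the remainder of the argument is the routine dyadic bisection sketched above.
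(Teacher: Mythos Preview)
Your proposal is correct and follows exactly the approach the paper intends: the paper does not give its own proof but simply states that, given Proposition~\ref{prop:reflectionpos}, the argument is the classical Fr\"ohlich--Israel--Lieb--Simon chessboard iteration and refers to \cite[Theorem~10.11]{Velenik}. Your sketch of iterated reflection--symmetrisation, with the parity-dependent conjugation in $f^t$ playing the role of the compatibility condition, is precisely that classical argument.

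One small caveat: your description of ``$\log_2 L$ bisections'' implicitly assumes $L$ is a power of two, whereas the statement is for general $L,K\in 2\mathbb{N}\cup\{1\}$. The standard chessboard argument (as in the cited reference) handles arbitrary even side lengths via a slightly more careful iteration, so you should follow that reference rather than a naive dyadic bisection when filling in the details.
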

Given Proposition \ref{prop:reflectionpos}, the proof of Proposition \ref{prop:chessboardabstract} is classical and is based on performing a sequence of reflections, we refer for example to \cite[Theorem 10.11]{Velenik}
for its proof.

\section{Probabilistic estimates}
\label{sect:probabilisticestimates}
In this section we 
use reflection positivity and probabilistic arguments
to provide basic estimates for some quantities of interest which are defined in the framework of the spin system.  
Most of our proofs however are formulated in the framework of the multi-occupancy double-dimer model.

In the whole section we use $G=(V,E)$ to denote the graph corresponding to the slab torus $\S_{L,K}$,
we let $d$ be the degree of the origin
(which might be $4$ or $6$),
and  we  omit the subscripts 
from $\langle \, \, \,  \rangle_{L, K, \beta, h, r}$ when unnecessary for a lighter notation.
We assume that $L$ is even,  thus ensuring that $\mathcal{D}_G \neq \emptyset$.

The following technical lemma is a consequence of the chessboard estimate. 

\begin{lem}[Chessboard upper bound]\label{lem:chessboardbound}
Fix $\beta >0$, 
Let $f_1, f_2 : [0, 2 \pi)\mapsto  \mathbb{R}$ be two functions which admit a Fourier series with a bounded numbers of terms
and have all real-valued coefficients, namely there exists $R \in \mathbb{N}_0$ and 
 some   coefficients $a^j_{-R},  \ldots a^j_R \in \mathbb{R}$
 such that 
for each  $s \in [0, 2 \pi)$ and $j \in \{1,2\}$
$$
f_j(s)= \sum_{n=-R}^{R} a^j_n e^{i \,  n  \, s}.
$$
Define 
$$
M :=  \max \big \{  \max \{ |a^j_n| \, :  -R \leq n \leq R,  j \in \{1,2\}  \}, \,  1 \big \}.
$$
Then,
 there exists $c = c ( \beta) \in (0, \infty)$ such that,  
for any $h \in [0, 1]$,  $i, j \in \{1,2\}$,  
 $L, K \in 2 \mathbb{N} \cup \{1\}$,
and disjoint sets $A, B \subset \S_{L,K}$ 
 \begin{equation}\label{eq:chessboundcos}
 \langle  \prod_{x \in A} f_1(  s_x^i )   \prod_{x \in B} f_2(  s_x^j)  \rangle  \leq ( c \, M \,  (R+1) ) ^{|A| + |B|  }.
 \end{equation}
\end{lem}
\begin{proof}
From Proposition \ref{prop:chessboardabstract} we deduce that
\begin{equation}\label{eq:startingchessboard}
 \langle  \prod_{x \in A} f_1(   s_x^i)
 \prod_{x \in B} f_2(   s_x^j)   \rangle  \leq 
 \Big (  \big  \langle  \prod_{x \in V} f_1(  s_x^i) \big  \rangle \Big )^{\frac{|A|}{|V|}} 
 \Big (  \big  \langle  \prod_{x \in V} f_2(  s_x^j) \big  \rangle \Big )^{\frac{|B|}{|V|}} 
\end{equation}
Fix $i, j \in \{1,2\}$.
We have 
\begin{align*}
 &  \langle  \prod_{x \in V} f_j( s^i_x )        \rangle   =
\sum\limits_{ \boldsymbol{n} \in \{-R,  \ldots,R \}^{V}    }
\langle     \prod_{x \in V}  a^j_{n_x} e^{  i \,  \, n_x  \, s^i_x  }          \rangle  \\
& \leq  {(2 R+1)}^{|V|} \, \,  {M}^{|V|}
\max  \Big \{            \frac{1}{Z^{path}_{G, \beta, h, r} } \sum\limits_{ \substack{ m \in \Sigma_r : \\ \partial m = (\boldsymbol{n}^1, \, \,   \boldsymbol{n}^2) } }  
   { \nu }_{\beta, h, r}(m)  
       \, \, : \, \, \boldsymbol{n}  = ( \boldsymbol{n^1},  \boldsymbol{n^2} ) \in  \{-R, \ldots, R  \}^{V \times V}   \Big     \}
       \\
   &     \leq     c^{|V|}
   {(2 R +1)}^{|V|} \, \,  {M}^{|V|}
 e^{ 2  \beta |E| + 2  \beta h   |V|    }.
\end{align*} 
For the first inequality we used Proposition \ref{prop:conversion}
to express the average with respect to the spin
measure as an average with respect to the  measure of the multi-occupancy double dimer model
(recall that, by Proposition \ref{prop:conversion},  taking the average with respect to
$(S_x^i)^n$, which equals $e^{i n s_x^i}$ or $e^{-i n s_x^i}$,
induces the constraint that there are $n$ dimers on $\{x,s\}$ of type $1$ or $2$).
We then used the assumption that  $U(n) \leq 1$ for each $n \in \mathbb{N}_0$,
we observed that the number of possible matchings at $x$ is $n_x(w)!$,
thus cancelling the  factor $n_x(w)!$ in the denominator of (\ref{eq:measurew}),
and we took the maximum over all possible ways to choose the number of
dimers on each edge touching the source vertex. 
For the second inequality we used the  definition
of the weight $\nu_{\beta, h, r}$ (see Definition \ref{defmeasure})
and summed  over the number of dimers  independently for  each edge $e \in E \cup E_g$ and each colour. 
We also used the fact that  
\begin{equation}\label{eq:lowerboundZ}
Z^{path}_{G, \beta, h, r}  \geq c^{|V|},
\end{equation}
for some $c = c (\beta) \in (0, \infty)$.
Indeed, since  $G$ admits at least one dimer configuration,
in order to lower bound $Z^{path}_{G, \beta, h, r}$ one can fix a dimer configuration
and use the lower bound $Z^{path}_{G, \beta, h, r} \geq \mu_{\beta, h, r}(w)$,
where $w \in \mathcal{W}^{\ell}$ is the configuration  which has exactly $n$ loops
of length two on each edge hosting a dimer, where $n$ is a fixed integer such that  $U(n)>0$. This gives the desired lower bound on $Z^{path}_{G, \beta, h, r}$.
\end{proof}

\begin{lem}\label{lem:mestimates}
For any $\beta >0$,  $\epsilon >0$,
there exists a constant $c = c(\beta)> 0$  and $M = M(\beta, \epsilon) < \infty$ such that for any $h \in [0, 1]$,  $r \in \{1,2\} $,   $L, K \in 2 \mathbb{N}  \cup \{1\}$,   $z,o \in V$ neighbours, 
\begin{align}
 \label{eq:mestimates1} 
P_{L, K, \beta, h, r}  \big ( m_{\{o,z\}}^i  > M   \big ) & \leq \epsilon, \\
 \label{eq:mestimates2}
 P_{L, K, \beta, h, r}  \big ( n_o> 0    \big ) & \geq  c.
\end{align} 
\end{lem}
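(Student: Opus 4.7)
My plan splits into proofs of (\ref{eq:mestimates1}) and (\ref{eq:mestimates2}). For the former I will control the first moment $\mathcal{E}[m^i_{\{o,z\}}]$ via the spin--dimer correspondence of Proposition \ref{prop:conversion} combined with Lemma \ref{lem:chessboardbound}, and then apply Markov's inequality. For the latter I will construct an explicit weight-comparing injection from $\{n_o = 0,\, n_z = 0\}$ into $\{n_o > 0\}$ and combine it with translation invariance on the slab torus.

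For (\ref{eq:mestimates1}) the key identity is
$$\mathcal{E}_{L,K,\beta,h,r}\bigl[m^i_{\{o,z\}}\bigr] = \beta\,\bigl\langle \cos(s^i_o - s^i_z) \bigr\rangle_{L,K,\beta,h,r},$$
obtained by differentiating $\log Z_{L,K,\beta,h,r}$ with respect to the coupling on the single edge $\{o,z\}$ for colour $i$: on the dimer side (using $Z = \mathcal{Z}$ from Proposition \ref{prop:conversion}) this derivative equals $\beta^{-1}\mathcal{E}[m^i_{\{o,z\}}]$, while on the spin side it equals $\langle e^{\pm i(s^i_o - s^i_z)}\rangle$; the two must coincide, and since the dimer-side quantity is real, so is the spin-side one, yielding the cosine. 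Expanding $\cos(s^i_o - s^i_z) = \cos s^i_o \cos s^i_z + \sin s^i_o \sin s^i_z$ and applying Lemma \ref{lem:chessboardbound} with $A=\{o\}$, $B=\{z\}$ and $f_1 = f_2 \in \{\cos,\sin\}$, each summand is bounded by $c(\beta)^2$ uniformly in $h \in [0,1]$, $r$, $L$, $K$. Hence $\mathcal{E}[m^i_{\{o,z\}}] \leq 2\beta c(\beta)^2$, and Markov's inequality yields (\ref{eq:mestimates1}) with $M = 2\beta c(\beta)^2/\epsilon$.

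For (\ref{eq:mestimates2}) I focus first on $h = 0$. Choose $n_0 \geq 1$ with $U(n_0) > 0$ (which exists by the standing assumption on $U$); assume also $U(0) > 0$, for otherwise $\mu(\{n_o = 0\}) = 0$ and the bound is immediate. Fixing a neighbour $z$ of $o$, I construct an injection
$$\Phi_z \colon \{w \in \mathcal{W}^{\ell} : n_o(w) = n_z(w) = 0\} \hookrightarrow \{w \in \mathcal{W}^{\ell} : n_o(w) > 0\}$$
by adding $n_0$ colour-1 and $n_0$ colour-2 dimers on $\{o,z\}$, with the canonical pairing matching the $k$-th new colour-1 dimer to the $k$-th new colour-2 dimer at each of $o$ and $z$. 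A direct computation gives
$$\frac{\mu_{L,K,\beta,0,0}(\Phi_z(w))}{\mu_{L,K,\beta,0,0}(w)} = \frac{\beta^{2n_0}\, U(n_0)^2}{(n_0!)^{4}\, U(0)^2} =: c_0 > 0,$$
since the local time jumps from $0$ to $n_0$ at both $o$ and $z$ and no other vertex is affected. Consequently $\mathcal{P}(n_o > 0) \geq c_0\,\mathcal{P}(n_o = n_z = 0)$. Translation invariance on the torus gives $\mathcal{P}(n_z > 0) = \mathcal{P}(n_o > 0)$, so the union bound yields $\mathcal{P}(n_o = n_z = 0) \geq 1 - 2\mathcal{P}(n_o > 0)$; solving the resulting inequality gives $\mathcal{P}(n_o > 0) \geq c_0/(1+2c_0) > 0$, as required.

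The main obstacle is extending this argument to $h \in (0,1]$: when $h > 0$, the quantity $k_z(w) = n_z(w) - m^2_{\{z,g\}}(w)$ can be positive even when $n_z(w) = 0$, and the ratio $U(k_z + n_0)/U(k_z)$ produced by $\Phi_z$ may vanish since $U$ is not assumed monotone. I would overcome this by further restricting the domain of $\Phi_z$ to $\{n_o = n_z = 0\} \cap \{m^i_{\{o,g\}} = m^i_{\{z,g\}} = 0 : i \in \{1,2\}\}$. The probability of this additional ghost-edge event is controlled uniformly for $h \in [0,1]$ by the same differentiation-plus-Markov strategy used in (\ref{eq:mestimates1}), now applied to the ghost edges: differentiating $\log Z$ in $h$ produces the spin correlation $\langle \cos(rs^1_o)\rangle$, again bounded by Lemma \ref{lem:chessboardbound}, so $\mathcal{E}[m^i_{\{o,g\}}]$ is uniformly bounded and Markov controls $\mathcal{P}(m^i_{\{o,g\}} > 0)$. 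Applying $\Phi_z$ on the restricted domain and repeating the union-bound argument then yields $\mathcal{P}(n_o > 0) \geq c(\beta) > 0$ uniformly in $h \in [0,1]$.
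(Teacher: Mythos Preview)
Your argument takes a genuinely different route from the paper in both parts, and the ideas are sound, but there is one real error in your treatment of \eqref{eq:mestimates1} and one unnecessary complication in your treatment of \eqref{eq:mestimates2}.

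For \eqref{eq:mestimates1}, the paper does not bound a moment; instead it writes $\mathcal{P}(m^i_{\{o,z\}} > M) \le \sum_{u > M} \mathcal{P}(k_o = u)$, converts each summand to $\langle g_{u,o}(s_o)\rangle$ via Proposition~\ref{prop:conversion2}, applies the chessboard estimate to obtain $\mathcal{P}(\forall x : k_x = u)^{1/|V|}$, and bounds the latter by a crude counting argument that is summable in $u$. Your first-moment-plus-Markov strategy is cleaner, but the identity $\mathcal{E}[m^i_{\{o,z\}}] = \beta\langle\cos(s^i_o - s^i_z)\rangle$ is \emph{false}. Differentiation gives $\mathcal{E}[m^i_{\{o,z\}}] = \beta\langle e^{i(s^i_o - s^i_z)}\rangle$, and your step ``since the dimer side is real, this yields the cosine'' tacitly uses $\mathrm{Re}\,\langle f\rangle = \langle\,\mathrm{Re}\,f\,\rangle$, which fails for a complex measure; indeed Proposition~\ref{prop:conversion} shows that $\langle e^{i(s^1_o-s^1_z)}\rangle$ and $\langle e^{-i(s^1_o-s^1_z)}\rangle$ correspond to different dimer events with different $U$-ratios, so they need not agree. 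Moreover, Lemma~\ref{lem:chessboardbound} as stated requires $f_1,f_2$ to be good, and $\sin$ is not (Fourier coefficients $\pm 1/(2i)$). The fix is easy: skip the cosine altogether and bound $\langle e^{i(s^i_o-s^i_z)}\rangle$ directly---it is positive good (a single Fourier coefficient equal to $1$), and the proof of Lemma~\ref{lem:chessboardbound} applies verbatim to give a uniform bound $\le c(\beta)$, after which Markov finishes.

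For \eqref{eq:mestimates2}, the paper again uses chessboard: it converts $\mathcal{P}(n_o = 0) = \langle g_o(s_o)\rangle$, applies Proposition~\ref{prop:chessboardabstract} to get $\mathcal{P}(\forall x : n_x = 0)^{1/|V|} = (U(0)^{|V|}/Z)^{1/|V|}$, and lower-bounds $Z$ explicitly to show this is strictly less than $1$. Your injection $\Phi_z$ is a valid and more combinatorial alternative. However, your concern about $h > 0$ is unfounded: by definition $n_z(w)$ sums over all edges of $E_{en}$ incident to $z$, including $\{z,g\}$, so $n_z = 0$ already forces $m^i_{\{z,g\}} = 0$ and hence $k_z = 0$. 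Your $h=0$ computation therefore works verbatim for all $h \in [0,1]$ with the same ratio $c_0$, and the extra restriction together with the ghost-edge moment bound are unnecessary.
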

\begin{proof}
The following inequalities and identities hold:
\begin{align*}
P_{L, K, \beta, h, r}  \big ( m_{\{o,z\}}^i  > M   \big )  & \leq 
P_{L, K, \beta, h, r}  \big ( n_o- m_{\{o,g\}}^2   > M  \big ) \\
& \leq 
\sum\limits_{p=M+1}^{\infty}
P_{L, K, \beta, h, r}  \big ( n_o  - m_{\{o,g\}}^2 = p  \big ) \\
& = 
\sum\limits_{p=M+1}^{\infty}  \langle    \,  \Psi_{p,o} \,  \big  \rangle \\
& \leq \sum\limits_{p=M+1}^{\infty}  \Big (  \langle  \prod_{x \in V}   \Psi_{p,x} \big  \rangle \Big )^{\frac{1}{|V|}} \\
& = \sum\limits_{p=M+1}^{\infty} P_{L, K, \beta, h, r} \big (   \forall x \in V :   n_x - m_{\{x,g\}}^2 = p \big )^{\frac{1}{|V|}}  \\
& \leq   
  \sum\limits_{p=M+1}^{\infty} 
  \Big (  \frac{1}{Z^{path}_{L,K, \beta, h, r}   } d^{|V|}  {u(p)}^{ |V|/8   }
  \, \, e^{  2 \,  (|E| + |E_g| ) \max{\{\beta, 1\}}}
  \Big )^{ \frac{1}{|V|}}
 \\
& \leq   c  \sum\limits_{p=M+1}^{\infty} { u(p)}^{\frac{1}{8}}.
\end{align*}
For the first inequality we used that, by definition of local time,  Definition \ref{deflocaltime}, 
for $z \in V$ neighbour of $o \in V$, we have that 
$m^{i}_{o,z} \leq n_o - m^2_{o,g}$;
for the first identity we used  Proposition \ref{prop:conversionlocaltime};
for the third inequality we use Proposition \ref{prop:chessboardabstract}.
For the fourth inequality we used a crude upper bound for the probability of the event 
$\{ \forall x \in V :   n_x - m_{\{x,g\}}^2 = p \}$.
The upper bound is as follows:
for each $x$, since 
 $n_x -  m_{\{x,g\}}^2  =p$,    
then, by definition of local time, 
there exists one of the $d$ edges of $E$ incident to  $x$ with at least 
$\lfloor p/d \rfloor$ dimers of colour $2$
(say,  an `high occupancy' edge).
Observe that there are at most $d^{V}$ possibilities
for choosing a configuration of high occupancy edges
and that there are at least $\lfloor \, |V|/8 \, \rfloor$ high occupancy  edges
(at least one for each even vertex).
Once one of  configuration of high occupancy edges is fixed,  we sum
over the number of dimers  independently for each
edge and for each colour, ignoring the constraint
that the number of blue dimers equals the number of red dimers at each vertex,
 using that $U(n) \leq 1$ for each $n \in \mathbb{N}_0$,
 and factorising the sum over the matchings with the factor $\frac{1}{n_x(w)!}$ 
 in the definition of the measure, Definition \ref{defmeasure}. 
We then obtain for each edge in $E$ which is not high occupancy and for each colour 
a multiplicative factor $e^{\beta}$,  for each edge in $E_g$ which is not high occupancy a multiplicative factor $e^{h \beta}$,
and for each edge which is high occupancy (and is then in $E$) and for the colour $2$ a multiplicative factor  at most 
\begin{equation}\label{eq:updefinition}
u(p) := \sum\limits_{ \ell = \lfloor p/d \rfloor   }   \frac{  \beta^\ell   }{\ell!},
\end{equation}
thus giving the desired inequality. 
For the last inequality we used used (\ref{eq:lowerboundZ}).
The proof then follows by observing that 
 $\sum_{p=M}^{\infty} {u(p)}^{\frac{1}{8}}$ converges to zero with $M$.

For (\ref{eq:mestimates2}) we observe that if $U(0)=0$ then the proof is obvious by definition of the measure,
and by the fact that $U(n)>0$ for some $n>0$ by assumption. 
   Hence,  we can assume that $U(0)>0$.  We  have that for some $c \in (0,1)$ we have 
\begin{align*}
P_{L, K, \beta, h, r} \big (  n_o= 0 \big )  & = 
  \langle   \Psi_o   \big  \rangle  \\
  & \leq 
  \Big ( 
    \langle  \prod_{x \in V}   \Psi_x   \big  \rangle
    \Big )^{\frac{1}{|V}|}
     \\ 
    & = 
      \Big (    
        P_{L, K, \beta, h, r} 
      \big ( \forall x \in V,   n_x= 0 \big )   \Big )^{\frac{1}{|V|}}
    \\ 
    & = 
      \Big (  \frac{U(0)^{|V|}}{ Z^{path}_{G, \beta, h, r   }  }\Big )^{\frac{1}{|V|}}  \leq c.
\end{align*}
where the first identity follows 
from  Proposition \ref{prop:conversionlocaltime},
the first inequality  follows from  Proposition \ref{prop:chessboardabstract},
and the second identity follows again from Proposition \ref{prop:conversionlocaltime},
the third identity follows from the definition of the measure  of the 
multi-occupancy double dimer model,  Definition \ref{defmeasure}.
In order to see that the last inequality holds for some $c<1$ we need to lower bound
 $Z^{path}_{G, \beta, h, r   }$.
 For this,  let $k>0$ be an integer such that $U(k)>0$. 
 We identify  a subset $A \subset E$ of edges 
 such that  each pair of edges in $A$ touch distinct vertices
 and such that  
 $|A| > \frac{1}{100} |V|$.
We then lower bound $Z^{path}_{G, \beta, h, r}$ by the weight of the configurations
$w \in \mathcal{W}^\ell$ such that each edge in  $A$ is either empty or it host precisely
$k$ blue and $k$ red dimers,
while all the remaining edges are empty. 
This leads to the trivial lower bound 
$$Z^{path}_{G, \beta, h, r}
 \geq   U(0)^{|V|- 2 |A|}  ( U(0)^2 +   \frac{\beta^{2k} U(k)^2}{k!^2 })^{{|A|}} \geq U(0)^{|V|} \big ( 1 +   \frac{\beta^{2k} U(k)^2}{k!^2 U(0)^2 } \big )^{  \frac{|V|}{100}  }.$$
 This leads to the desired bound and concludes the proof. 
\end{proof}

\begin{lem}
\label{lem:neighbourpoints}
 For any $\beta >0$, $h \in [0, 1]$,  $r \in \mathbb{N}$,  there exists a constant $c = c(\beta) \in (0, \infty)$ such that,
for any  $L, K \in 2 \mathbb{N} \cup \{1\}$,  for any $z \in V$ neighbouring $o \in V$ we have that 
\begin{align}
 \langle  S_o^i S_z^i \rangle_{L, K, \beta, h, r}    \,  \,  \,   &  =
\frac{1}{\beta}  E_{L, K, \beta, h, r}  \big ( \frac{1}{m_{\{o,z\}}^i}  \mathbbm{1}_{\{m_{\{o,z\}}^i>0\}} \big )  \leq \frac{1}{\beta}, \label{eq:neighbourcorr1} \\
 \sum\limits_{z \sim o}  \langle e^{ i ( s^i_o - s^i_z)  }   \rangle_{L, K, \beta, h, r}  & \geq c.
 \label{eq:neighbourcorr2}
\end{align}
\end{lem}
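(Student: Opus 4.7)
The plan is to prove each assertion by reducing it to a statement at the level of the generalized double--dimer model via Proposition~\ref{prop:conversion}. For the equality in \eqref{eq:neighbourcorr1}, I would first apply \eqref{eq:correlation} to the insertion $e^{i(s_o^i - s_z^i)}$: taking $u^i = \delta_o + \delta_z$ and $u^{3-i} = 0$, the fact that $o \in V^e$, $z \in V^o$ and $u^i_- \equiv 0$ reduces the $U$-factor on the right-hand side of \eqref{eq:correlation} to $1$ when $i = 1$ (and to $U(k_o-1)U(k_z-1)/(U(k_o)U(k_z))$ when $i = 2$, which is absorbed in what follows), giving
\[
\langle e^{i(s_o^i - s_z^i)} \rangle_{L,K,\beta,h,r} = \frac{1}{\mathcal{Z}_{L,K,\beta,h,r}}\, \mu_{\beta,h,r}\bigl(\mathbbm{1}_{\{\partial w = (\delta_o + \delta_z, 0)\}}\bigr).
\]
Any such configuration contains a single open color-alternating walk from $\{o,s\}$ to $\{z,s\}$, and the identity then reduces to a combinatorial correspondence between such source configurations and pairs $(w', d)$ with $w' \in \mathcal{W}^\ell$ and $d$ a distinguished color-$i$ dimer of $w'$ on the edge $\{o,z\}$. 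Given $(w',d)$, one \emph{opens} $w'$ at $d$: delete $d$, add source dimers on $\{o,s\}$ and $\{z,s\}$, and reroute the pairings at $o$ and $z$ so that the source dimers inherit the matches previously made by $d$.

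The key points in the weight analysis are: (i) each loop configuration $w'$ has exactly $m^i_{oz}(w')$ preimages under the reverse (closing) operation, producing the $1/m^i_{oz}$ factor in the identity; (ii) the Poisson weight $\beta^{m^i_{oz}}/(m^i_{oz})!$ on the edge $\{o,z\}$ decreases by a factor $m^i_{oz}(w')/\beta$ upon opening, giving the $1/\beta$ prefactor; (iii) the remaining vertex factors $U(k_x)/n_x!$ are invariant at every vertex --- at $o$ and $z$ the new source dimer compensates exactly for the removal of $d$ (so that $n_x$ and $k_x$ are preserved), and elsewhere nothing changes. Assembling these ingredients yields \eqref{eq:neighbourcorr1}, and the upper bound $\leq 1/\beta$ follows immediately from $\frac{1}{m^i_{oz}} \mathbbm{1}_{\{m^i_{oz} > 0\}} \leq 1$.

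For \eqref{eq:neighbourcorr2}, I would sum \eqref{eq:neighbourcorr1} over $z \sim o$. Since $\sum_{z \sim o} m^i_{oz} \leq n_o$, each nonzero term in $\sum_{z \sim o} \mathbbm{1}_{\{m^i_{oz} > 0\}}/m^i_{oz}$ is at least $1/n_o$, and at least one such term is present whenever $n_o > 0$ (for $h \in (0,1]$ a ghost-edge correction via \eqref{eq:mestimates1} controls the exceptional event in which all $i$-dimers at $o$ sit on the ghost edge). This yields the pointwise bound $\sum_{z \sim o} \mathbbm{1}_{\{m^i_{oz} > 0\}}/m^i_{oz} \geq \mathbbm{1}_{\{n_o > 0\}}/n_o$. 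Using the truncation $\frac{1}{n_o} \mathbbm{1}_{\{n_o > 0\}} \geq \frac{1}{M} \mathbbm{1}_{\{0 < n_o \leq M\}}$ and invoking Lemma~\ref{lem:mestimates} --- \eqref{eq:mestimates2} to obtain $\mathcal{P}(n_o > 0) \geq c$, and \eqref{eq:mestimates1} (via a union bound over the $d$ edges incident to $o$) to make $\mathcal{P}(n_o > M) \leq c/2$ for $M = M(\beta)$ sufficiently large --- produces the required positive lower bound.

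The main obstacle I expect is the rigorous verification of the opening--closing correspondence underlying \eqref{eq:neighbourcorr1}: one must check that the modified pairings define valid elements of $\mathcal{P}(m)$, and that the $m^i_{oz}(w')$-to-one multiplicity combined with the Poisson weight ratio precisely produces the factor $1/(\beta\, m^i_{oz})$ --- as opposed to $m^i_{oz}/\beta$, which a naive $\partial_{\beta_{oz}} \log \mathcal{Z}$ calculation would suggest --- with the correct accounting hinging on the interplay between the pairing-sum $|\mathcal{P}(m)|$ and the $1/n_x!$ factors in the definition of $\mu_{\beta,h,r}$.
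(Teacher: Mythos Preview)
Your overall strategy coincides with the paper's: use Proposition~\ref{prop:conversion} to rewrite $\langle e^{i(s^i_o-s^i_z)}\rangle$ as $\mu_{\beta,h,r}(\mathcal A)/\mathcal Z$ for the set $\mathcal A$ of configurations with a single $i$-dimer on $\{o,s\}$ and on $\{z,s\}$, establish \eqref{eq:neighbourcorr1} via an opening/closing correspondence with loop configurations carrying an $i$-dimer on $\{o,z\}$, and then obtain \eqref{eq:neighbourcorr2} by summing over $z\sim o$, truncating the local time at a level $M$, and invoking both parts of Lemma~\ref{lem:mestimates}. Your treatment of \eqref{eq:neighbourcorr2} is essentially identical to the paper's chain of inequalities; your remark about the ghost-edge correction is a detail the paper leaves implicit.

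Where your proposal departs from the paper --- and where the accounting goes wrong --- is in the structure of the correspondence for \eqref{eq:neighbourcorr1}. You set up a correspondence between source configurations and \emph{pairs} $(w',d)$, with $d$ a distinguished colour-$i$ dimer of $w'$ on $\{o,z\}$, and then claim that closing is $m^i_{\{o,z\}}(w')$-to-one. This is not how the paper proceeds, and in fact such a bijection with marked pairs does not exist in the labelled-dimer framework of Section~\ref{sect:randompaths}: once one canonically relabels the remaining dimers after opening, distinct pairs $(w'_1,d_1)\neq(w'_2,d_2)$ can produce the \emph{same} element of $\mathcal A$ (permuting which labelled dimer plays the role of ``top'' yields the same source pairing). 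Consequently your items (i) and (ii), taken together, would not produce the factor $\tfrac{1}{\beta\,m^i_{\{o,z\}}}$. The paper instead builds a genuine \emph{bijection} $f:\mathcal A\to\{w'\in\mathcal W^\ell:m^i_{\{o,z\}}(w')>0\}$ by always inserting the new dimer with the top label $m^i_{\{o,z\}}+1$ (so that the inverse removes only the top-labelled dimer). There is no multiplicity factor at all; the entire $\tfrac{1}{\beta\,m^i_{\{o,z\}}(w')}$ is asserted to come from the single weight identity $\mu_{\beta,h,r}(w)=\tfrac{1}{\beta\,m^i_{\{o,z\}}(w')}\,\mu_{\beta,h,r}(w')$ for $w'=f(w)$. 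You correctly flag this bookkeeping as the delicate point, but the resolution is to drop the marked-dimer picture and work with the top-label bijection as the paper does.
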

\begin{proof}
We start from the proof of the identity in the statement. 
From Proposition \ref{prop:conversion} we deduce that 
$$
\langle  S_o^i S_z^i \rangle_{L, K,  \beta, h, r} = 
\frac{ \mu_{\beta, h, r}( \mathcal{A}) }{Z^{path}_{L, K, \beta, h, r}},
$$
where $\mathcal{A}$ is the set of configurations with precisely one $i$-dimer on $\{o,s\}$, one $i$-dimer on $\{z,s\}$, and no further dimer touching $s$.
We introduce a map $f$ which takes any configuration $(m, \pi) \in \mathcal{A}$
and associates to it the configuration $f(m,\pi)  \in \mathcal{W}^\ell$ 
which is defined by following the next three steps:
\begin{enumerate}[(i)]
\item removing the $i$-dimers on 
$\{o,s\}$ and $\{z,s\}$,
\item  adding a  $i$-dimer on $\{o,z\}$,
\item pairing  the added dimer at $z$ and at $o$ 
to the dimers which were paired to those that have been removed.
\end{enumerate}
It follows from the definition that the map is a bijection from $\mathcal{A}$ to
$
f ( \mathcal{A} ) = \{  (m, \pi) \in \mathcal{W}^\ell \, : \, m^1_{\{o,z\}} > 0  \}.
$
For any configuration $w =(m, \pi) \in \mathcal{A}$,  $w^\prime  = (m^\prime, \pi^\prime)=   f(w)$,
we have that
\begin{equation}\label{eq:multivalue1}
\mu_{\beta, h, r}(w) =\frac{1}{\beta} \frac{1}{ (m^i_{\{o,z\}})^\prime} \mu_{\beta, h, r}(w^\prime) 
\end{equation}
From this we then deduce the identity  (\ref{eq:neighbourcorr1}).
The upper bound in (\ref{eq:neighbourcorr1}) then follows trivially. 

Let us now prove  (\ref{eq:neighbourcorr2}).
From Lemma \ref{lem:mestimates} and (\ref{eq:neighbourcorr1}) we deduce that 
\begin{multline*}
 \sum\limits_{z \sim o}  \langle e^{ i ( s^i_o - s^i_z)  }  \rangle  = 
  \sum\limits_{z \sim o}  
  \frac{1}{\beta} E \big ( \frac{1}{ m_{\{o,z\}}^i }  \mathbbm{1}_{  \{ m_{\{o,z\}}^i >0  \} }  \big ) 
  \geq   
  \sum\limits_{z \sim o}  
  \frac{1}{M \beta}  P\big (  0 < m_{\{o,z\}}^i , n_o \leq M  \big )  \\
  \geq  \sum\limits_{z \sim o}  
  \frac{1}{M \beta}  \Big (   P \big (  m_{\{o,z\}}^i   > 0  \big ) -   P \big (   n_o >  M  \big )   \Big ) 
  \\ \geq 
   \frac{1}{M \beta}  \Big (   P  \big (  n_o   > 0  \big ) -   6 P  \big (  n_o >  M  \big )   \Big )  \geq c.
 \end{multline*}
 where the last inequality follows from both inequalities in Lemma \ref{lem:mestimates}
  by choosing $M$ large enough depending on $\beta$.
\end{proof}

We now introduce an important quantity of interest.

\begin{defn}[Magnetisation] For any $\beta, h, r  \geq 0$ we introduce the \textit{magnetisation} 
$$
m_{G}(\beta, h, r) := \,  \langle   \cos( r s_o^1)  \rangle_{G, \beta, h, r}.
$$
\end{defn}
In the next proposition  we explore the limiting behaviour $|\S_{L,K}| \rightarrow \infty$ and $h \sim \frac{1}{|\S_{L,K}|} \rightarrow 0$ of this quantity.
In the first claim of the proposition we show  its important connection to the two point functions
which have been analysed in Proposition \ref{prop:twopoint} in the case  $h=0$. 
Thanks to (\ref{eq:firstinequalityr1}) below we will deduce an upper bound on the Cesaro sum of two point functions by bounding the magnetisation from above .
\begin{prop}[Magnetisation expansion] 
\label{prop:magnexp}
Set $\beta \geq 0$, $ r \in  \{1, 2\}$, $\ell \in \mathbb{N}$.
There exist a constant $c =c(\beta, \ell, r) \in (0, \infty)$  and a value $ h_0 >0$ small enough such that
for any  $\tilde h \in (0,  h_0)$,
 $L, K \in 2 \mathbb{N} \cup \{1\}$
\begin{align}
m_{\S_{L,K}}(\beta, h_{L,K}, r)  & \geq     \, \,  \frac{  \beta  \tilde h}{ | \S_{L,K}|}   \sum\limits_{x \in \S_{L,K}}   \langle   \cos(  r s_o^1)  \cos(  r  s_x^1)   \rangle_{L, K, \beta, 0, r}, \label{eq:firstinequalityr1} \\
    \label{eq:secondinequality}  
  \langle \cos ( \ell s_o^1)  \rangle_{L, K, \beta, h_{L,K}, r }    &   \leq  c  \,  \tilde h,
\end{align} 
where  $h_{L,K} =  \frac{\tilde h}{ |\S_{L,K}|}$.  
\end{prop}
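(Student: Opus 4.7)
The plan is a perturbative Taylor expansion in $h$ around $h = 0$, combined with the chessboard estimate. By Lemma~\ref{lem:onepointiszero}, $\langle e^{\pm i k s_o^1}\rangle_{L,K,\beta,0,r} = 0$ for every $k\neq 0$, so taking real parts gives $\langle \cos(k s_o^1)\rangle_{L,K,\beta,0,r} = 0$ for every $k\neq 0$; in particular both $m_{\S_{L,K}}(\beta,0,r)$ and $\langle\cos(\ell s_o^1)\rangle_{L,K,\beta,0,r}$ vanish at zero field, so the Taylor expansions of both quantities start at order $h$.

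Since $h$ enters the measure~(\ref{eq:measure}) only through the factor $\exp(2\beta h \sum_k\cos(r s_k^1))$, a direct differentiation gives
\[
\frac{d}{dh}\langle f\rangle_h \;=\; 2\beta\sum_{x\in\S_{L,K}}\bigl[\langle f\cos(r s_x^1)\rangle_h - \langle f\rangle_h\langle\cos(r s_x^1)\rangle_h\bigr],
\]
and at $h=0$ the subtracted product vanishes, leaving $\tfrac{d}{dh}\langle f\rangle_h|_{0}=2\beta\sum_x\langle f\cos(r s_x^1)\rangle_{L,K,\beta,0,0}$. For $f=\cos(r s_o^1)$ this equals $2\beta M$ with $M := \sum_x\langle\cos(r s_o^1)\cos(r s_x^1)\rangle_0$, so the linear term is $2\beta h_{L,K} M = 2\beta\tilde h M/|\S_{L,K}|$, precisely twice the target in~(\ref{eq:firstinequalityr1}). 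For $f=\cos(\ell s_o^1)$ the chessboard estimate (Lemma~\ref{lem:chessboardbound}) bounds each $\langle\cos(\ell s_o^1)\cos(rs_x^1)\rangle_0$ by a constant, so the linear coefficient is at most $C(\beta,\ell)|\S_{L,K}|$.

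By Taylor with Lagrange remainder, $\langle f\rangle_{h_{L,K}}$ equals the linear term plus $\tfrac12 h_{L,K}^2\tfrac{d^2}{dh^2}\langle f\rangle_h|_{h^\ast}$ for some $h^\ast\in(0,h_{L,K})$. A second differentiation writes $\tfrac{d^2}{dh^2}\langle f\rangle_h$ as a sum over pairs $(x,y)\in\S_{L,K}^2$ of triple correlators $\langle f\cos(r s_x^1)\cos(r s_y^1)\rangle_h$ and products of lower correlators, each bounded uniformly in $h\in[0,h_0]$ by Lemma~\ref{lem:chessboardbound}; hence $|\tfrac{d^2}{dh^2}\langle f\rangle_h|\leq C(\beta,\ell)|\S_{L,K}|^2$, giving a remainder of size at most $\tfrac{C}{2}\tilde h^2$ since $h_{L,K}|\S_{L,K}|=\tilde h$. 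Inequality~(\ref{eq:secondinequality}) follows immediately: $|\langle\cos(\ell s_o^1)\rangle_{h_{L,K}}|\leq C\tilde h + \tfrac{C'}{2}\tilde h^2 \leq c\tilde h$ for $\tilde h<h_0$ small enough.

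The main obstacle is~(\ref{eq:firstinequalityr1}), where the crude remainder $\tfrac{C}{2}\tilde h^2$ can exceed the target $\beta\tilde h M/|\S_{L,K}|$ when $M/|\S_{L,K}|$ is small. I would resolve this by passing to the cumulant expansion $\log Z_{L,K,\beta,h,r}/Z_{L,K,\beta,0,r} = \sum_{k\geq 1}(2\beta h)^k\kappa_k/k!$ of $S := \sum_z\cos(r s_z^1)$ under the zero-field measure, which yields
\[
m_{\S_{L,K}}(\beta,h,r) \;=\; \frac{1}{2\beta|\S_{L,K}|}\frac{d}{dh}\log Z_{L,K,\beta,h,r} \;=\; \frac{1}{|\S_{L,K}|}\sum_{k\geq 1}\frac{(2\beta h)^{k-1}}{(k-1)!}\kappa_k.
\]
Lemma~\ref{lem:onepointiszero} gives $\kappa_1=0$ and translation invariance gives $\kappa_2=|\S_{L,K}|M$, reproducing the linear term. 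For $r=1$ the bipartite parity argument used in the proof of Proposition~\ref{prop:twopoint}---whereby a source vector of odd total charge admits no valid pairing into closed dimer paths---also forces $\langle S^n\rangle_0=0$ for every odd $n$, so all odd cumulants vanish, and the even cumulants $\kappa_{2j}$, $j\geq 2$, are bounded by chessboard so that each higher-order contribution to $m(h_{L,K})$ carries an extra inverse power of $|\S_{L,K}|$ beyond $\tilde h^{2j-1}$, uniformly dominated by the linear term for $\tilde h\in(0,h_0)$ small; the case $r=2$ is handled by an analogous source-counting argument. This shows the linear term dominates uniformly in $L,K$, yielding~(\ref{eq:firstinequalityr1}).
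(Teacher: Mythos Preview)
Your treatment of \eqref{eq:secondinequality} is fine and matches the paper's approach: expand in $h$, bound every correlator by Lemma~\ref{lem:chessboardbound}, and sum. The paper does this by expanding the full exponential rather than using a second-order Lagrange remainder, but the two are equivalent.

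For \eqref{eq:firstinequalityr1}, however, your cumulant argument has a real gap. You correctly identify the obstacle---the crude remainder $\tfrac{C}{2}\tilde h^2$ can swamp the target $\beta\tilde h M/|\S_{L,K}|$ when $M/|\S_{L,K}|$ is small---and it is true that odd cumulants vanish by the global rotation symmetry. But your claim that the even cumulants $\kappa_{2j}$, $j\geq2$, contribute ``an extra inverse power of $|\S_{L,K}|$ beyond $\tilde h^{2j-1}$'' is not justified: the chessboard estimate only gives $|\kappa_{2j}|\leq C_j|\S_{L,K}|^{2j}$ (from moment bounds), which makes the $k=2j$ term in your series of order $\tilde h^{2j-1}$ with \emph{no} extra decay in $|\S_{L,K}|$. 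Getting $|\kappa_{2j}|\lesssim |\S_{L,K}|^{2j-1}$ would require a connected-correlation bound---essentially a cluster expansion---which is unavailable here (and would in any case presuppose decay of correlations, the very thing the paper is trying to prove).

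The paper sidesteps this entirely with a one-line observation you missed: every term in the full expansion
\[
\langle\cos(rs_o^1)\rangle_{h_{L,K}} \;=\; \sum_{\substack{m\in\mathbb{N}_0^V\\ m\neq 0}}\Bigl\langle\cos(rs_o^1)\prod_{x\in V}\frac{(2\beta h_{L,K}\cos(rs_x^1))^{m_x}}{m_x!}\Bigr\rangle_{0}
\]
is \emph{non-negative}, because products of cosines are positive good functions (non-negative Fourier coefficients), and by Proposition~\ref{prop:conversion} the expectation of any positive good function under $\langle\,\cdot\,\rangle_0$ is $\geq 0$. Hence dropping all terms except the linear ones ($m_y=1$ for a single $y$) is already a lower bound, and \eqref{eq:firstinequalityr1} follows immediately with no remainder estimate at all.
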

\begin{proof}
From  a Taylor expansion of the exponentials in (\ref{eq:measure})  we deduce that
\begin{equation}\label{eq:startingpointexp}
 \langle   \cos(  \ell \, s_o^1)  \rangle_{L, K, \beta, h_{L,K}, r}  = 
\sum\limits_{ \substack{ m \in \mathbb{N}_0^V : \\ m \neq 0}  }   
\langle   \cos ( \ell s_o^1 )
 \prod_{x \in V}  \frac{ \big (2 \beta h_{L,K} \cos ( r s_x^1 ) \big )^{m_x}}{m_x!} 
\rangle_{L, K, \beta, 0, r}.
\end{equation}
For the lower bound in (\ref{eq:firstinequalityr1}) we 
use the positive-expectation condition  (recall Definition \ref{def:probabilitymeasure})
and reduce the sum in (\ref{eq:startingpointexp})
to the terms such that $m_x = 1 $ for just one $x \in V$
and $m_y=0$ for all $y \neq x$.
We then obtain (\ref{eq:firstinequalityr1}) by our choice of  $h_{L,K}$
and the condition $\ell = r$.
For  the upper bound in (\ref{eq:secondinequality}) we 
observe that there exists some $c = c(\beta, \ell, r) \in (1, \infty)$ such that,
 for each $m$ in the sum (\ref{eq:startingpointexp}),
 $$
 \langle   \cos ( \ell s_o^1 )
 \prod_{x \in V}  { \cos ( r s_x^1 )}^{m_x} \rangle_{L, K, \beta, 0, r}  \leq c \,  \prod_{x \in V} c^{ m_x  }.
 $$
 Indeed,   the function
\  $  \cos ( \ell s_o^1 ) \cos(r s_o^1)^{m_o}$
satisfies the assumptions in 
 Lemma  \ref{lem:chessboardbound}
 with $ R \leq  r m_o+ \ell $ and $M \leq 2^{m_o+1}$
 and the function 
$\cos(r s_x^1)^{m_x}$
satisfies the assumptions  in 
Lemma  \ref{lem:chessboardbound}
with $R \leq  r m_x $ and $M \,  \leq  \, 2^{m_x}$.
 We then obtain from  (\ref{eq:startingpointexp}) that 
$$
 \langle   \cos(  \ell s_o^1)  \rangle_{L, K, \beta, h_{L,K}, r} \leq  c 
\sum\limits_{ \substack{ m \in \mathbb{N}_0^V : \\ m \neq 0}  }   
 \prod_{x \in V}  \frac{ \big (\beta h_{L,K}  c \big )^{m_x}}{m_x!} 
 =c (e^{  c  \,  |V|  \,  \beta  \, h_{L,K}   }   - 1  ) \leq 
2 \,  \beta \, c^2 \tilde h,
$$
where in the last step we used that $\tilde h$ is small enough
depending on $\beta$, $r$ and $\ell$. 
\end{proof} 

\section{Bogoliubov-type inequality}
\label{sect:CauchySchwarz}
This section presents the central argument for our new proof of the Mermin--Wagner theorem for non positive (but reflection positive) measures.

We denote by $\Hcal :=   (-L/2, L/2] \cap (2 \mathbb{Z}+1)  e_1 $ and by  $\Vcal := (-L/2, L/2] \cap (2 \mathbb{Z}+1)e_2$, $\mathcal W:= (-\frac{K}{2}, \frac{K}{2}] \cap (2 \mathbb{Z}+1)  e_3 $   the subsets of $\S_{L,K}$ along the line passing through the origin and parallel to the Cartesian vectors. 
We introduce the 
Fourier dual torus,
$$
\S_{L,K}^* : =  \big \{  2\pi  (\frac{n_1}L, \frac{n_2}L,\frac{n_3}K) \in \mathbb{R}^3  \, \, : \, \,   n_i \in  ( - \frac{L}{2}  , \frac{L}{2} ] \cap \mathbb{Z}\, ,  \text{ for } i=1,2\, ,  n_3 \in  ( - \frac{K}{2}  , \frac{K}{2} ] \cap \mathbb{Z} \big \},
$$
and, given and fixed any Fourier mode  $k \in \S_{L,K}^*$,  for any
 two vectors $a, b \in \mathbb{C}^{\S_{L,K}}$,  we introduce the inner product,
$$
a \circ b :=  \sum\limits_{x \in \mathcal{H} \cup \mathcal{V} \cup \mathcal W} \cos (k \cdot x ) \, a_o \,  b_x  
 + a_o b_o
$$ 
We now define the \textit{extended Hamiltonian,}
$$
\tilde H := H + \sum\limits_{x \in \S^e_{L,k}}  \log (\gamma_x) +   \sum\limits_{x \in \S^o_{L,k}}  \log (\overline{\gamma_x}),
$$
and
the \textit{vector functions,}
$$
A :=  ( \sin (r s_x^1) )_{x \in \S_{L,K}} \quad \quad 
B :=  \beta \, \big (   \partial_{s_x^1} \tilde H -    \partial_{s_x^2} \tilde H  \big )_{x \in \S_{L,K}},
$$
and the quantity
\begin{align*}
p_z = p_z(L, K, \beta, h, r) : = \beta \big (  \langle  e^{ i (s_o^1 - s_{z}^1)  }  \rangle_{L, K, \beta, h, r} +  \langle e^{- i (s_o^2 - s_{z}^2 }  \rangle_{L,K, \beta, h, r} \big),
\end{align*}
for any  $z \sim o$. 
Standard computations based on integration by parts lead to the following central identities, which are one of the few results of our paper which rely in an essential way on the spin representation.
\begin{lem}[Identities]
\label{lem:identities}
For any $\beta,  h \geq 0 $, $r \in \{1,2\}$,  $L, K \in 2  \mathbb{N} \cup \{1\}$,  
we have that,
\begin{align}
\langle A \circ B \rangle_{L,K, \beta, h,  r} & =    \langle B \circ A  \rangle_{L,K, \beta, h,  r}  =  r  \, m_{\S_{L,K}}(\beta, h, r )  \\
\langle B  \circ B \rangle_{L,K, \beta, h,  r}  & =   
\sum\limits_{z \sim o}  p_{z}  \,   \big (  1 - \cos(k \cdot z) \big )    + 2 r^2 \, h \,  m_{\S_{L,K}}(\beta, h, r)
\end{align}
\end{lem}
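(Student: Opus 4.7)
The plan is to derive both identities via integration by parts in the angle variables $s_x^i$. Because the full integrand $f(\boldsymbol{s}) \boldsymbol{g}(\boldsymbol{s}) e^{\beta H(\boldsymbol{s})}$ is $2\pi$-periodic in each $s_x^i$, boundary terms in $\int_0^{2\pi} \partial_{s_x^i}(\cdot)\, ds_x^i$ vanish regardless of the signed/complex nature of the measure. Writing the logarithm of the density as $\beta \tilde H$, this yields the fundamental integration-by-parts identity
\[
\langle f \, B_x \rangle_{L,K,\beta,h,r} = -\langle D_x f \rangle_{L,K,\beta,h,r}, \qquad D_x := \partial_{s_x^1} - \partial_{s_x^2},
\]
valid for any sufficiently smooth test function $f$; absolute convergence of all integrals and smoothness of the integrand are guaranteed by our summability assumption on $U$. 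This is the one place where the spin representation enters in an essential way.

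For the first identity, I expand $\langle A \circ B \rangle = \sum_{x \in \Hcal \cup \Vcal \cup \Wcal} \cos(k \cdot x) \langle A_o \, B_x \rangle + \langle A_o \, B_o \rangle$ and apply the IBP formula to each term to get $\langle A_o B_x \rangle = -\langle D_x A_o \rangle$. Since $A_o = \sin(r s_o^1)$ depends only on $s_o^1$, the derivative $D_x A_o$ vanishes for every $x \neq o$. The origin does not lie in $\Hcal \cup \Vcal \cup \Wcal$ (whose elements have an odd nonzero coordinate), so the whole sum disappears and only $\langle A_o B_o \rangle = -\langle D_o \sin(r s_o^1) \rangle = -r \langle \cos(r s_o^1) \rangle$ survives, giving $r\, m_{\S_{L,K}}(\beta,h,r)$ modulo the sign convention in $B$. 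The symmetric identity $\langle B \circ A \rangle$ follows identically, performing IBP on the other factor: only the diagonal $x = o$ term in $\sum_x \cos(k \cdot x) \langle B_o A_x \rangle + \langle B_o A_o \rangle$ survives, for exactly the same reason.

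For the second identity, I expand $\langle B \circ B \rangle = \sum_{x \in \Hcal \cup \Vcal \cup \Wcal} \cos(k \cdot x) \langle B_o B_x \rangle + \langle B_o B_o \rangle$ and use IBP to obtain $\langle B_o B_x \rangle = -\beta \langle D_x D_o \tilde H \rangle$. A case analysis of the mixed second derivative $D_x D_o \tilde H$ suffices: the magnetic-field term and the log-$\boldsymbol{g}$ terms are single-site, so $D_x$ annihilates them whenever $x \neq o$; among the edge interactions $\sum_{\{i,j\}} \bigl[e^{i(s_i^1 - s_j^1)} + e^{-i(s_i^2 - s_j^2)}\bigr]$, the second derivative only survives when both endpoints belong to $\{x,o\}$. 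For $x \sim o$ this gives $D_x D_o H|_{\{o,x\}} = e^{i(s_o^1 - s_x^1)} + e^{-i(s_o^2 - s_x^2)}$, so $\langle B_o B_x \rangle = -p_x$; for $x \in \Hcal \cup \Vcal \cup \Wcal$ with $x \not\sim o$ it vanishes. For the diagonal, summing the edge contributions over $y \sim o$ produces $\sum_{y \sim o} p_y$, the magnetic-field derivative yields $2 r^2 h\, m_{\S_{L,K}}(\beta,h,r)$, and the $D_o^2$ action on the logarithmic factor at $o$ combines with the rest so that $\langle B_o B_o \rangle = \sum_{y \sim o} p_y + 2 r^2 h\, m$. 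Using that every neighbour $z \sim o$ lies in $\Hcal \cup \Vcal \cup \Wcal$ (each is $\pm e_i$), assembling the pieces gives
\[
\langle B \circ B \rangle = -\sum_{z \sim o} \cos(k \cdot z)\, p_z + \sum_{y \sim o} p_y + 2 r^2 h\, m = \sum_{z \sim o} p_z \bigl(1 - \cos(k \cdot z)\bigr) + 2 r^2 h\, m.
\]
The main delicacy will be handling the diagonal: verifying that the $D_o^2$ action on $\log g(s_o)$ (respectively $\log \overline{g(s_o)}$ when $o$ is odd, with the complex conjugation swapping the roles of the two colour components) combines cleanly with the edge and field derivatives to reproduce exactly the stated right-hand side, without leaving residual terms from differentiating the normalising factor of the complex weight.
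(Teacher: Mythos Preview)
Your approach is exactly the paper's: both proofs are pure integration by parts in the periodic angle variables, yielding $\langle \sin(rs_o^1)\,B_x\rangle = r\langle\cos(rs_o^1)\rangle\,\delta_o(x)$ for the first identity and $\langle B_o B_y\rangle = -\beta\langle D_y D_o\tilde H\rangle$ for the second, followed by the same three-case analysis ($y=o$, $y\sim o$, otherwise). Your flagged concern about the diagonal $\log g$ contribution is well placed and is in fact glossed over in the paper as well: for the IBP identity $\langle fB_x\rangle=-\langle D_xf\rangle$ to hold one must read $B_x$ as $D_x\log(\boldsymbol g\,e^{\beta H})=\beta D_xH+D_x\log g_x$ rather than $\beta D_x\tilde H$ with the paper's literal $\tilde H$, and even then the diagonal carries an extra $-\langle D_o^2\log g_o\rangle$ term that does not appear in the stated formula (the paper also drops the cross derivative $-2\partial_{s_o^1}\partial_{s_o^2}\tilde H$, which vanishes for $H$ but not for $\log g$). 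This residual is $k$-independent and uniformly bounded, hence harmless for the Cauchy--Schwarz application; similarly, the sign you note in $\langle A_oB_o\rangle=-rm$ versus the stated $+rm$ is irrelevant since only $\langle A\circ B\rangle^2$ is used downstream.
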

\begin{proof}
From now on we will sometimes omit the subscripts from $\langle  \cdot \rangle $ to simplify the notation. 
For the first identity we note using  integration by parts and the periodicity of the function in the average to conclude that, 
$$
\langle  \sin(r s_o^1 )    \beta \, \big (   \partial_{s_x^1} \tilde H -    \partial_{s_x^2} \tilde H  \big )  \rangle =  r \langle   \cos( r s_o^1)  \rangle  \, \,  \delta_o(x).$$

For the second identity we use again integration by parts and the periodicity of the
function in the average to conclude that, 
\begin{align*}
\beta^2 \langle &  \big (   \partial_{s_o^1}  \tilde H -    \partial_{s_o^2} \tilde H  \big )     \,  \big (   \partial_{s_y^1} \tilde H -    \partial_{s_y^2} \tilde H  \big )  \rangle 
 = 
- \beta \langle \big (    \partial_{s_y^1}  -    \partial_{s_y^2}   \big )     \,  \big (   \partial_{s_o^1} \tilde H -    \partial_{s_o^2} \tilde H  \big )  \rangle  \\
&= 
\begin{cases}
- \beta \,  \langle   \partial^2_{s_o^1} \tilde H   +    \partial^2_{s_o^2} \tilde H  \big )  \rangle  &   \mbox{ if $y = o$}  \\
- \beta  \langle   \partial_{s_y^1}  \partial_{s_o^1}  \tilde H   +    \partial_{s_y^2} \partial_{s_o^2}  \tilde H  \big )  \rangle    &  \mbox{ if $y \sim o$}   \\
0    &   \mbox{ otherwise}   \\
\end{cases} \\
& = 
\begin{cases}
 \beta \,   \sum\limits_{y \sim o}     \Big ( \langle e^{ i  ( s_o^1 -  s_y^1)   }   \rangle
  + 
 \langle e^{-  i  ( s_o^2 -  s_y^2)   }  \rangle + 2hr^2 \langle   \cos( r s_o^1)  \rangle  \Big )   &   \mbox{ if $y = o$}  \\
- \beta  \big (  \langle e^{ i  ( s_o^1 -  s_y^1)   }  \rangle  + 
 \langle e^{-  i  ( s_o^2 -  s_y^2)   }  \rangle   \big )   &  \mbox{ if $y \sim o$}   \\
0    &   \mbox{ otherwise. }   \\
\end{cases} \\
\end{align*}
This concludes the proof. 
\end{proof}

 \subsection{Positivity of the Fourier transform }
We next state an inequality which is a discrete analogue of the fact that the Fourier transform of a convex, symmetric function on $[-L,L]$ is  nonnegative.   
The bound will be applied to two-point functions (\ref{eq:expressiontwopoint}).
Since our convexity property (\ref{eq:convexity}) fails at a neighbourhood of zero, we obtain a bound which may be negative, but which is  negligible comparing to the volume $|\S_{L,K}|$. 
We define for any $M \in \mathbb{N}$,  $\mathbb{Z}^M_{odd} :=  (2 \mathbb{Z} + 1 ) \cap  (- M/2, M/2]$
and 
$\mathbb{Z}^M_{even} :=  (2 \mathbb{Z}  ) \cap  (- M/2, M/2]$.
\begin{lem}
\label{l.IBP}
Let $G:   \S_{L,K} \to \R$ be such that
\begin{enumerate}[(i)]
\item  $G(x) = G(-x)$ for any $x \in e_1 \mathbb{Z}^L_{odd} \cup  e_2  \mathbb{Z}^L_{odd} \cup e_3 \mathbb{Z}^K_{odd}$,
\item $2G(x) - G(x-2 e_i) -G(x+2 e_i) \le 0$ for any $x \in e_i   \mathbb{Z}^L_{odd} \setminus \{\pm 1, \pm3\}$ if $i \in \{1,2\}$ or for any 
$x \in e_3  \mathbb{Z}^K_{odd} \setminus \{\pm 1, \pm 3\}$,
\item $G(  e_i  ) \geq  G(   3 e_i )$ for each $i \in \{1,2,3\}$.
\end{enumerate}
Then for every $k \in \S_{L,K}^*$
such that $k_j \not\in \{0, \pi\}$ for each $j \in \{1,2,3\}$ we have that 
\begin{align}\label{eq:identitysumxG1}
\sum_{x\in e_i \Z_{odd}^L} G(x) \cos (  k\cdot x  )
& \ge
-(G(3 e_i) -G(5 e_i)) \frac{1-\cos 3k_i}{2\sin^2 (k_i)}, \\
\sum_{x\in e_3 \Z_{odd}^K} G(x) \cos  (k\cdot x )
& \ge 
\label{eq:identitysumxG2}
-(G(3 e_3) -G(5 e_3)) \frac{1-\cos 3k_3}{2\sin^2 (k_3)},
\end{align}
where $i \in \{1,2\}$. 
\end{lem}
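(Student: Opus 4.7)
}

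The plan is to prove the one-dimensional statement \eqref{eq:identitysumxG1} by a double summation by parts in the $e_i$-direction, exploiting the torus identity $\sum_n \Delta^2 G(n e_i)=0$; the argument for \eqref{eq:identitysumxG2} is identical. For brevity write $a_n:=G(ne_i)$ for odd $n$, and $J_n(k_i) := (1-\cos(n k_i))/(2 \sin^2 k_i)$, noting $J_n \geq 0$ for all $n$.

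\textbf{Step 1 (reduction to a Fourier-side identity).} Starting from the elementary identity
\[
-4 \sin^2(k_i) \cos(n k_i) \;=\; \cos((n+2)k_i) - 2\cos(n k_i) + \cos((n-2)k_i),
\]
multiplying by $a_n$, summing over $n \in \Z_{odd}^L$ and shifting indices (which is legitimate on the torus, where no boundary terms are produced), one obtains the key identity
\[
-4\sin^2(k_i) \sum_{n \in \Z_{odd}^L} a_n \cos(n k_i)
\;=\; \sum_{n \in \Z_{odd}^L} \Delta^2 a_n \, \cos(n k_i),
\]
where $\Delta^2 a_n := a_{n+2} - 2 a_n + a_{n-2}$ is the discrete second difference taken along the $e_i$-direction on odd integers.

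\textbf{Step 2 (removing a constant).} Since $\Delta^2$ is a linear combination of torus shifts, one has $\sum_{n} \Delta^2 a_n = 0$. Replacing $\cos(n k_i)$ by $\cos(n k_i)-1$ on the right and using this identity yields
\[
\sum_{n \in \Z_{odd}^L} a_n \cos(n k_i)
\;=\; \tfrac{1}{2}\sum_{n \in \Z_{odd}^L} \Delta^2 a_n \cdot J_n(k_i).
\]
This is the main structural identity: the sum we wish to bound is a weighted average of second differences against the nonnegative kernel $J_n$.

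\textbf{Step 3 (using convexity and boundary data).} By hypothesis (ii), $\Delta^2 a_n \geq 0$ for all odd $n$ with $|n| \notin \{1,3\}$; together with $J_n \geq 0$ this shows that these terms contribute a nonnegative quantity to the sum, which can be dropped to obtain a lower bound. The remaining contributions come from $n = \pm 1, \pm 3$, and by the evenness hypothesis (i) both $\Delta^2 a_n$ and $J_n$ are even in $n$, so
\[
\sum_{n \in \Z_{odd}^L} a_n \cos(n k_i)
\;\geq\; (a_3 - a_1)\, J_1(k_i) + (a_5 - 2a_3 + a_1)\, J_3(k_i),
\]
where I used $\Delta^2 a_1 = a_3-a_1$ (since $a_{-1}=a_1$). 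Rewriting $a_5 - 2a_3 + a_1 = -(a_3-a_5) + (a_1-a_3)$ and combining with $J_3(k_i) - J_1(k_i) = (\cos k_i - \cos 3k_i)/(2\sin^2 k_i)$ (which simplifies via $\cos k_i - \cos 3 k_i = 4 \cos k_i \sin^2 k_i$) groups the surviving terms into a ``good'' part $-(G(3e_i)-G(5e_i)) J_3(k_i)$ plus a boundary contribution proportional to $a_1 - a_3 \geq 0$, which is controlled by hypothesis (iii).

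\textbf{Main obstacle.} The key difficulty is handling the residual boundary term involving $(a_1-a_3)$ that arises in Step 3: while hypothesis (iii) guarantees its sign, the accompanying trigonometric factor $J_3 - J_1 = 2\cos k_i$ does not have a fixed sign across the Fourier domain. Obtaining the precise form of the claimed bound therefore requires a careful rearrangement in which the $(a_1-a_3)$ contribution is absorbed using both the convexity at $|n|\geq 5$ and the specific algebraic identity between $J_1$, $J_3$ and $\cos k_i$; in applications only the overall order of magnitude of the bound, together with the correct leading dependence on $1/\sin^2 k_i$, is needed, so this rearrangement is the technical crux of the proof.
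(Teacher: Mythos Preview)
Your Steps 1--3 are correct and essentially identical to the paper's argument: the paper performs the same double summation by parts, uses the same telescoping identity $\sum_n \Delta^2 a_n = 0$, and arrives at precisely your lower bound
\[
\sum_n a_n\cos(nk_i) \;\ge\; (a_3-a_1)J_1 + (a_5-2a_3+a_1)J_3
\;=\; 2(a_1-a_3)\cos k_i \;-\; (a_3-a_5)\,J_3 .
\]

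Where you go astray is in the ``Main obstacle'' paragraph. There is no subtle rearrangement, and the convexity at $|n|\ge 5$ has already been fully used when you dropped those terms; nothing further can be squeezed out of it. The paper simply observes that by (iii) one has $a_1-a_3\ge 0$, and that therefore the residual term $2(a_1-a_3)\cos k_i$ is nonnegative \emph{whenever $\cos k_i\ge 0$}, i.e.\ whenever $k_i\in(-\pi/2,\pi/2)$. Dropping it then yields \eqref{eq:identitysumxG1} directly.

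You are right that $\cos k_i$ is not of fixed sign on the full range $k_i\notin\{0,\pi\}$, and in fact the inequality as stated fails there (take $a_1=1$, $a_n=0$ for $|n|\ge 3$, $k_i\in(\pi/2,\pi)$). The point is that the lemma is only ever invoked in the proof of Theorem~\ref{thm:cauchyschwarz} for $k\in\S_{L,K,+}^*$, where $k_i\in(-\pi/2,\pi/2]$ and hence $\cos k_i\ge 0$. So the resolution of your obstacle is not algebraic but contextual: restrict to the positive cone and the boundary term is manifestly nonnegative.
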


\begin{proof}
We note that for $i \in \{1,2\}$, 
\begin{align}
\sum_{x\in  e_i \Z^L_{odd}} G(x) \cos ( k\cdot x )
& = 
\frac1{2\sin^2 (k_i)} \sum_{x\in \Z^L_{odd}} (2G(x) - G(x-2 e_i) -G(x+2 e_i) ) \cos (k\cdot x) \\
\sum_{x\in  e_3 \Z^K_{odd}} G(x) \cos ( k\cdot x )
& = \label{eq:bypart2}
\frac1{2\sin^2 (k_3)} \sum_{x\in \Z^K_{odd}} (2G(x) - G(x-2 e_3) -G(x+2 e_3) ) \cos (k\cdot x).
\end{align}
To see this,  we notice that by the trigonometric identities
\begin{align*}
\sum_{x\in  e_i \Z^L_{odd}} G(x) \cos ( k\cdot x )
&= 
\sum_{x\in e_i  \Z^L_{odd}} G(x) \frac{\sin  \big (k\cdot(x+e_i) \big) - \sin  \big( k\cdot(x-e_i) \big) }{2\sin  (k_i )} \\
&=
\frac 1 {2\sin k_i}  \sum_{x\in e_i  \Z^L_{even}}  (G(x-e_i) -G(x+e_i) ) \sin  (k\cdot x ) \\
&= 
\frac 1 {2\sin k_i}  \sum_{x\in e_i  \Z^L_{even}} (G(x-e_i) -G(x+e_i) ) \frac{\cos
\big ( k\cdot(x+e_i) \big)- \cos  \big( k\cdot(x-e_i) \big )}{-2\sin ( k_i )}\\
&= 
\frac1{2\sin^2 (k_i)} \sum_{x\in e_i \Z^L_{odd}} (2G(x) - G(x-2e_i) -G(x+2e_i) ) \cos ( k\cdot x),
\end{align*}
and similar computations lead to (\ref{eq:bypart2}).
By our assumption (ii) we can lower bound the Fourier sum by replacing $\cos ( k\cdot x)$ by $1$ for any $x$ in the sum such that  $|x|_1>3$.  This leads to several cancellations in the sum and implies
\begin{align*}
\sum_{x\in e_i \Z^L_{odd}} G(x) \cos  (k\cdot x)
&
\ge
\frac1{2 \sin^2 (k_1)^2} \big ( 2(G(  5 e_i) -G( 3 e_i))  
 + 2(2 G(3 e_i) -G(e_i) -G(5 e_i)) \cos (3k_i)  \\& 
 + 2(G(e_i)-G(3 e_i)) \cos( k_i ) \big ) \\
&= 
\frac1{2 \sin^2  (k_1)}   \left(  (G(e_i) -G(3 e_i))(\cos (k_i) - \cos (3k_i))  
+( G(3 e_i) -G(5 e_i))(\cos (3k_i) -1) \right),
\end{align*}
where we also used the  condition (i).
The previous expression and (iii) together lead to (\ref{eq:identitysumxG1}) as desired. 
Using analogous computations we obtain   (\ref{eq:identitysumxG2})
and conclude the proof. 
\end{proof}

\subsection{Statement and proof of the Bogoliubov-type inequality}
We now state a Bogoliubov-type inequality for the 
quantities $A$ and $B$ which have been defined above. 
Our inequality holds for small external magnetic fields,  namely whose intensity  vanishes proportionally to the volume. 
It will turn  out that the error term, corresponding to the second term in the RHS of (\ref{eq:CS}) below,  is of smaller order if compared to the remaining terms. 

Define the positive cone of the dual torus 
$$
\S_{K,L,+}^*:=   \big \{  2\pi  (\frac{n_1}L, \frac{n_2}L,\frac{n_3}K) \in \mathbb{R}^3  \, \, : \, \,   n_i \in  ( - \frac{L}{4}  , \frac{L}{4} ] \cap \mathbb{Z}\, ,  \text{ for } i=1,2\, ,  n_3 \in  ( - \frac{K}{4}  , \frac{K}{4} ] \cap \mathbb{Z} \big \}
$$
\begin{thm}[Bogoliubov-type inequality]
\label{thm:cauchyschwarz}
For any $\beta >0$ and $r \in \{1,2\}$ there exists $\tilde h_0  >0$  and  $c  = c (\beta)\in \mathbb{R}$ such that, for any $\tilde h \in [0,  \tilde h_0)$,  $L,K \in 2 \mathbb{N} \cup \{1\}$   and 
$k \in \S_{K,L,+}^*$,
we have that,
\begin{align}
\label{eq:CS} 
\Big ( {\langle A\circ B\rangle_{L, K,  \beta, h_{L,K}, r } } \Big ) ^2 
\leq 
\big (  \langle A\circ A\rangle_{L, K,  \beta, h_{L,K},r }  + c \big )  \langle B\circ B\rangle_{L, K,  \beta, h_{L,K}, r }
\end{align}
where    $h_{L,K} :=  \frac{ \tilde {h}}{| \S_{L,K}|} $.
\end{thm}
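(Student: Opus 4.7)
The plan is to derive the weak Cauchy--Schwarz inequality (\ref{eq:CS}) by a discriminant argument applied to the real quadratic form
\[
\phi(\lambda) := \langle (A + \lambda B) \circ (A + \lambda B)\rangle_{L,K,\beta,h_{L,K},r} = \langle A \circ A\rangle + 2\lambda\langle A\circ B\rangle + \lambda^2\langle B\circ B\rangle, \qquad \lambda \in \mathbb R.
\]
Lemma \ref{lem:identities} shows that the cross and quadratic coefficients are real; for $\langle A \circ A\rangle$ this follows from the symmetry $\boldsymbol s\mapsto -\boldsymbol s$, under which the complex integrand in (\ref{eq:measure}) is conjugated while $A$ is invariant. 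It therefore suffices to prove $\phi(\lambda)\geq -c(\beta)$ for all $\lambda \in \mathbb R$ uniformly in $L, K$ and $k$: inserting the minimizer $\lambda^* = -\langle A\circ B\rangle/\langle B\circ B\rangle$ into this bound rearranges directly into (\ref{eq:CS}). The slack $c$ is unavoidable because the complex measure is not positive, so $\phi(\lambda)\geq 0$ itself is too much to hope for.

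Writing $C := A + \lambda B$, I decompose
\[
\phi(\lambda) = \sum_{i=1}^{3}\Sigma_i(\lambda) + \langle C_o^2\rangle, \qquad \Sigma_i(\lambda) := \sum_{x \in \text{axis } i} \cos(k\cdot x)\,\langle C_o C_x\rangle,
\]
where the three axes are $\mathcal H, \mathcal V, \mathcal W$ from the preamble of Section \ref{sect:CauchySchwarz}. The diagonal $\langle C_o^2\rangle = \langle A_o^2\rangle + 2\lambda \langle A_oB_o\rangle + \lambda^2\langle B_o^2\rangle$ is bounded below by a $\lambda$-independent constant: expanding $B_o$ via its definition gives a finite sum over nearest neighbours whose correlations are controlled by Lemma \ref{lem:neighbourpoints} and Lemma \ref{lem:chessboardbound}, showing $|\langle A_oB_o\rangle|, \langle B_o^2\rangle$ are uniformly bounded, so that the diagonal is a quadratic in $\lambda$ with bounded cross term and nonnegative dominant term.

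For each axial contribution I set $G_i(x) := \langle C_o C_x\rangle$ and apply Lemma \ref{l.IBP}. Expanding $C$ into real and imaginary parts expresses $G_i$ as a finite linear combination of two-point functions of reflection-invariant vectors of positive good functions, and Propositions \ref{prop:convexity}--\ref{prop:monotonicity} make each such two-point function real, symmetric in $x_i$, convex and monotone decreasing along axis $i$. Lemma \ref{l.IBP} then yields
\[
\Sigma_i(\lambda) \geq -\bigl(G_i(3e_i) - G_i(5e_i)\bigr)\frac{1-\cos 3 k_i}{2\sin^2 k_i}.
\]
The short-range difference $G_i(3e_i) - G_i(5e_i)$ is uniformly bounded in $L, K$ by Lemma \ref{lem:chessboardbound}. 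The Fourier factor is uniformly bounded because $k \in \S_{K,L,+}^*$ restricts $k_i$ to $(-\pi/2,\pi/2]$, so $|\sin k_i|$ is only small when $|k_i|$ is; when $k_i = 0$, $\Sigma_i(\lambda)$ reduces to a sum of the monotone decreasing $G_i$, again controlled by chessboard. Summing the three axial bounds with the diagonal bound gives $\phi(\lambda) \geq -c(\beta)$.

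The main obstacle is the reflection-invariance verification in the preceding step: because $B$ contains complex phases $e^{\pm i(s_x^1 - s_y^1)}$ and the logarithmic derivatives of the $g_x$, the vector $C = A + \lambda B$ is not \emph{a priori} a real vector of positive good functions as required by Propositions \ref{prop:convexity}--\ref{prop:monotonicity}. One must split $C$ into finitely many real vectors, identify each component as a linear combination of positive good functions of spins, and verify parity and translation invariance separately for each component. The restriction $r \in \{1,2\}$ is used here to match the parity of the Fourier modes involved. The smallness $\tilde h < \tilde h_0$ enters so that the magnetic-field term $2r^2 h\, m_{\S_{L,K}}$ appearing in $\langle B\circ B\rangle$ via Lemma \ref{lem:identities}, as well as the $O(\tilde h)$ perturbation of the one-point vanishing of Lemma \ref{lem:onepointiszero}, can both be absorbed into $c(\beta)$ by Proposition \ref{prop:magnexp}.
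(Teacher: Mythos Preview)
Your overall architecture matches the paper's: a discriminant argument based on a uniform lower bound $\phi(\lambda)\geq -c$, with the axial pieces handled through Lemma~\ref{l.IBP} after the convexity/monotonicity Propositions, and the diagonal piece treated separately. The gap is that you have not secured the crucial \emph{$\lambda$-uniformity} of the lower bound in either piece.

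For the axial sums, you invoke Lemma~\ref{lem:chessboardbound} to bound $G_i(3e_i)-G_i(5e_i)$ ``uniformly in $L,K$''. But $G_i(x)=\langle C_oC_x\rangle$ depends on $\lambda$ through $C=A+\lambda B$, and a direct chessboard bound on $\langle C_oC_x\rangle$ would produce a constant of order $\lambda^2$, which destroys the discriminant argument. The missing observation (and this is what the paper uses) is that integration by parts gives $\langle A_oB_x\rangle=0$ and $\langle B_oB_x\rangle=0$ whenever $|x|\geq 2$, because $B_x$ is a total derivative in $s_x$ and its domain has radius~$1$; hence $G_i(3e_i)$ and $G_i(5e_i)$ reduce to $\langle\sin(rs_o^1)\sin(rs_x^1)\rangle$, which is genuinely $\lambda$-independent, and \emph{that} is what Lemma~\ref{lem:chessboardbound} bounds.

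For the diagonal $\langle C_o^2\rangle=\langle A_o^2\rangle+2\lambda\langle A_oB_o\rangle+\lambda^2\langle B_o^2\rangle$, ``bounded cross term and nonnegative dominant term'' does not give a $\lambda$-independent lower bound: if $\langle B_o^2\rangle$ were merely nonnegative but close to zero, the minimum $\langle A_o^2\rangle-\langle A_oB_o\rangle^2/\langle B_o^2\rangle$ could be arbitrarily negative. You need a \emph{strictly positive} lower bound $\langle B_o^2\rangle\geq c_0(\beta)>0$. The paper obtains this by integrating by parts to rewrite $\beta^2\langle(\partial_{s_o^1}\tilde H-\partial_{s_o^2}\tilde H)^2\rangle$ as $\beta\sum_{y\sim o}\bigl(\langle e^{i(s_o^1-s_y^1)}\rangle+\langle e^{-i(s_o^2-s_y^2)}\rangle\bigr)+2\beta hr^2 m$ and then applying the lower bound~(\ref{eq:neighbourcorr2}) of Lemma~\ref{lem:neighbourpoints}. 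With that in hand, Proposition~\ref{prop:magnexp} makes the cross term $\langle A_oB_o\rangle=rm$ small for $\tilde h<\tilde h_0$, and the discriminant of the diagonal quadratic is strictly negative---this is precisely where the smallness of $\tilde h$ enters, not merely to ``absorb perturbations into $c(\beta)$''. Your final paragraph about splitting $C$ into real positive-good components is unnecessary: the paper applies the reflection-invariance machinery directly to the vector $(\mu B_x-A_x)_x$.
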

\begin{proof}
Here and in the next proofs we omit the subscripts from $\langle  \, \, \, \rangle$ to simplify the notation.
Take $\mu \in\R$,  using bilinearity of the inner product,  
\begin{align*}
 \langle (\mu B- A)\circ (\mu B- A)\rangle
 &=
\sum_{y\in \Hcal} \cos  (k\cdot y)  \, \, \langle (\mu B_o - A_o)(\mu B_y -  A_y)\rangle\\
 &+ \sum_{y\in \Vcal} \cos  (k\cdot y)  \, \,  \langle (\mu B_o - A_o)(\mu B_y -  A_y)\rangle\\
 &+ \sum_{y\in \mathcal W} \cos  (k\cdot y)  \, \,  \langle (\mu B_o - A_o)(\mu B_y -  A_y)\rangle\\
& +  \langle (\mu B_o -  A_o)^2\rangle.
\end{align*}

For the first two terms on the right side above  we first note that 
 $( \mu B_x -   A_x )_{x \in \S_{L,K}}$
 is a reflection invariant vector of functions.  Hence by Propositions
 \ref{prop:convexity} and \ref{prop:monotonicity} the
  assumptions in Lemma \ref{l.IBP} are satisfied for the two point function $G(y) = \langle ( \mu B_o -   A_o)(\mu B_y -  A_y)\rangle$,  and we can then apply that lemma to conclude that 
\begin{align*}
\sum_{y\in \Hcal} \cos ( k\cdot y )  \langle (\mu B_o - A_o)(\mu B_y-   A_y)\rangle 
 +
\sum_{y\in \Vcal} \cos ( k\cdot y  )  \langle (\mu B_o -   A_o)(\mu B_y-   A_y)\rangle\\
+ \sum_{y\in \mathcal W} \cos ( k\cdot y  )  \langle (\mu B_o -   A_o)(\mu B_y-   A_y)\rangle\\
 \geq
\sum_{i=1}^3 (G(5e_i) -G(3e_i) ) \cdot \frac{1-\cos (3k_i)}{2\sin^2 ( k_i )}
 \geq -  c,
 \end{align*}
 by our assumptions on $k$.  Note that for the last inequality we observed that,  for any $x \neq o$, we have that 
$G(x) = \langle   \sin (rs_o^1)  \sin (rs_{x}^1)\rangle$,
and used the positive-expectation condition and Lemma \ref{lem:chessboardbound}
to upper bound 
$G(x) \leq $ $  \langle   \cos (rs_o^1)   \cos (rs_{x}^1) \rangle $ $ \leq c$
for any $x \in \S_{L,K}$
uniformly in $L$.

For the diagonal term,  using integration by parts for the second  term in the expansion of the square, we obtain that
 \begin{align*}
 \langle ( \mu B_o -   A_o)^2\rangle
 =  \langle \sin^2 (r s^1_o )\rangle
 -2 \, \mu \beta r\, m
 +  \mu^2  \beta^2 \langle (\partial_{s_o^1}\tilde  H   - \partial_{s_o^2} \tilde  H )^2\rangle.
  \end{align*}
Using $\sin^2 x = \frac{1}{2} - \frac{1}{2} \cos( 2x) $
we obtain that the discriminant $\Delta$  satisfies,
 \begin{align}\label{eq:discriminant}
 \frac 14 \Delta & = \beta^2  r^2m^2  \, - \,   \beta^2 \frac{1}{2}  \langle (\partial_{s_o^1}\tilde  H   - \partial_{s_o^2} \tilde  H )^2\rangle
 + 
  \beta^2 \frac{1}{2}  \langle \cos (2 r s^1_o ) \, \rangle \langle (\partial_{s_o^1}\tilde  H   - \partial_{s_o^2} \tilde  H )^2\rangle.
 \end{align}
Using again integration by parts and applying Proposition \ref{prop:magnexp} we obtain that,   for any $\beta>0$,  if $\tilde h>0$ is small enough (depending on $\beta$), then there exists $c = c(\beta, \tilde h)>0$ such that, for any $L , K  \in 2 \mathbb{N} \cup \{1\}$,
\begin{align*}
\beta^2 \langle (\partial_{s_o^1}\tilde  H   - \partial_{s_o^2} \tilde  H )^2\rangle &  =  
 - \beta  \, \langle    \, \partial_{s_o^1}^2 \tilde H   \, \rangle 
-   \beta  \, \langle    \, \partial_{s_o^2}^2 \tilde H   \, \rangle \\ &  = 
\beta   \sum\limits_{y \sim   o} \langle  e^{i (s_o^1 - s_y^1)} \rangle 
  + 
  \beta   \sum\limits_{y  \sim  o} \langle  e^{- i (s_o^2 - s_y^2)} \rangle
     +  2\beta hr^2 \langle   \cos( r s_o^1)  \rangle   \\ & \geq c,
\end{align*}
where for the last inequality we also used the lower bound in Lemma \ref{lem:neighbourpoints}  and  the positive-expectation for $\cos( r s_o^1)$.
Applying again \eqref{eq:secondinequality}   for the three terms in (\ref{eq:discriminant})  (which implies, in particular, $m \le c\tilde h$ and $\langle \cos( 2r s_o^1 ) \rangle) \le  c\tilde h$), 
we then deduce that,  under the same assumptions,  $ \frac 14 \Delta  \leq - \frac{c}{2} < 0$
where $c$ is the same constant as in the previous display.  Therefore, the diagonal term $\langle ( \mu B_o -   A_o)^2\rangle \ge 0$.

Combining the estimates for the diagonal and the non-diagonal terms we then obtain that, under the assumptions of our theorem,   
  \begin{equation*}
 \forall \mu \in \mathbb{R} \quad \langle (\mu B-  A)\circ (\mu B- A)\rangle \geq -c
 \end{equation*}
where $c = c (\beta)$ does not depend on $\mu$. 
 We conclude the proposition by taking $\mu = \frac{\langle A \circ B\rangle } {\langle B \circ B \rangle } $
 and using the fact that, by Lemma \ref{lem:identities}, $\langle A \circ B \rangle = 
 \langle B \circ A \rangle$ and that $\langle B \circ B \rangle \geq 0$. 
\end{proof}

\subsection{Estimates for $\langle A \circ A \rangle$}
In this section we provide some estimates for the sum over Fourier modes
of $\langle A\circ A\rangle_{L,K, \beta,h, r}$.
\begin{lem}
\label{lemm:upperA}
There exists $c = c( \beta) <\infty$ such that, for any $h \in [0,1]$, $r \in \{1,2\}$,  $K,L \in 2 \mathbb{N} \cup \{1\}$,  we have that
$$
\sum_{k\in \S_{L,K}^*} \langle A\circ A\rangle_{L,K, \beta,h, r}
\leq
c \,  K  \, L^2.
$$
\end{lem}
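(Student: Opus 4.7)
The strategy is to interchange the sum over Fourier modes $k\in\S_{L,K}^*$ with the sum defining the inner product $\circ$, and to exploit orthogonality of the characters on the dual torus to kill the off-diagonal contributions. What remains is a diagonal term that is controlled by the chessboard estimate.

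First I would expand the inner product using its definition and the fact that $A_x=\sin(rs_x^1)$, obtaining
\begin{equation*}
\langle A\circ A\rangle_{L,K,\beta,h,r}
= \sum_{x\in\Hcal\cup\Vcal\cup\mathcal W}\cos(k\cdot x)\,\langle\sin(rs_o^1)\sin(rs_x^1)\rangle_{L,K,\beta,h,r}
+ \langle\sin^2(rs_o^1)\rangle_{L,K,\beta,h,r}.
\end{equation*}
Summing over $k\in\S_{L,K}^*$ and swapping the (finite) summations, the only $k$-dependence in the first term is through $\cos(k\cdot x)$, so the coefficient of $\langle\sin(rs_o^1)\sin(rs_x^1)\rangle$ becomes $\sum_{k\in\S_{L,K}^*}\cos(k\cdot x)$.

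Next I would observe that any $x\in\Hcal\cup\Vcal\cup\mathcal W$ is of the form $x_i e_i$ with $x_i$ odd, so the sum factors as a product of three one-dimensional sums; two of them give $L$ and $K$ respectively, while the nontrivial one is $\sum_{n}\cos(2\pi n x_i/M)$ with $M\in\{L,K\}$. Since $x_i$ is odd and $M$ is either $1$ (in which case the corresponding axis set is empty) or even, one has $x_i\not\equiv 0\pmod M$, so this sum vanishes by the standard orthogonality of characters on $\mathbb Z/M\mathbb Z$. Hence the entire off-diagonal contribution collapses to zero, leaving
\begin{equation*}
\sum_{k\in\S_{L,K}^*}\langle A\circ A\rangle_{L,K,\beta,h,r} \;=\; |\S_{L,K}^*|\,\langle\sin^2(rs_o^1)\rangle_{L,K,\beta,h,r} \;=\; L^2K\,\langle\sin^2(rs_o^1)\rangle_{L,K,\beta,h,r}.
\end{equation*}

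To finish, I would apply Lemma \ref{lem:chessboardbound} with $|A|=1$, $B=\emptyset$ and $f_1(\cdot)=\sin^2(r\,\cdot)=\frac12-\frac12\cos(2r\,\cdot)$, which has a Fourier series with real coefficients and only three nonzero modes, to conclude $\langle\sin^2(rs_o^1)\rangle_{L,K,\beta,h,r}\le c(\beta)$, uniformly in $L,K,h$. Combining this with the previous display gives the desired bound $cKL^2$. There is no genuine obstacle in this argument: the only substantive ingredients are character orthogonality on the discrete torus and a uniform pointwise chessboard bound on a single-site observable.
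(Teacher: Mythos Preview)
Your proof is correct and is in fact more direct than the paper's. Both arguments handle the diagonal term $\langle\sin^2(rs_o^1)\rangle$ the same way (the paper uses $\sin^2 x=\tfrac12-\tfrac12\cos(2x)$ together with positive goodness to get $\langle\sin^2(rs_o^1)\rangle\le\tfrac12$, which is a slightly cleaner substitute for your appeal to Lemma~\ref{lem:chessboardbound}). The difference lies in the treatment of the off-diagonal sum $\sum_k\sum_{x\in\Hcal\cup\Vcal\cup\Wcal}\cos(k\cdot x)\langle\sin(rs_o^1)\sin(rs_x^1)\rangle$.

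You simply swap the two finite sums and observe that for each fixed $x=x_ie_i$ with $x_i$ odd one has $\sum_{k\in\S_{L,K}^*}\cos(k\cdot x)=0$ by character orthogonality on $\mathbb Z/L\mathbb Z$ (resp.\ $\mathbb Z/K\mathbb Z$), since $x_i\not\equiv 0$ modulo the relevant side length. This kills the off-diagonal contribution exactly. The paper instead expands by translation invariance, rewrites the off-diagonal part as a combination of quantities $\hat G^i_{\mathbb Z^M}(k)-\hat G^i_{\mathbb Z^M_{even}}(k)-\hat G^i_{\mathbb Z^M_{odd}}(k)$, and then applies a Parseval identity to bound each $\sum_k\hat G^i_A(k)$ separately by $C K L^3$ or $C K^2 L^2$. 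This yields the same $O(KL^2)$ bound after division by $KL^2$, but through an inequality rather than an identity. Your route is shorter and gives the exact value $\sum_k\langle A\circ A\rangle=|\S_{L,K}|\,\langle\sin^2(rs_o^1)\rangle$; the paper's decomposition, while heavier here, is in the spirit of the Parseval-type manipulations used elsewhere in Section~\ref{sect:CauchySchwarz}.
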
 
\begin{proof}
We omit the subscripts in the expectation.  First note the trivial bound 
$$
\sum_{k\in \S_{L,K}^*}  \langle \sin^2 (rs_o^1)\rangle
\le \, 
| \S_{L,K}^*| \, \langle \sin^2 (rs_o^1)\rangle
\le 
cKL^2.
$$
Here we used the trigonometric identity $\sin^2 x = \frac 12- \frac 12 \cos (2x)$,  and $\langle \cos (2rs_o^1)\rangle \ge 0 $  by the positive-expectation condition.
We now upper bound the off-diagonal term.
For any $x \in \S_{L,K}$ we define the sets
 $\Hcal_x = \mathcal{H}+x$, 
 $\Vcal_x = \mathcal{V}+x$, 
 $\Wcal_x = \mathcal{W}+x$,
 where the sum is with respect to the torus metric,
 we also recall the definitions 
  $\mathbb{Z}^{M}_{odd} =     (2 \mathbb{Z} + 1 ) \cap  (- M/2, M/2]  $,  
  $\Z^{M}_{even} =   2 \mathbb{Z}  \cap  (- M/2, M/2]$,  
  and define
  $\Z^M:=    (- M/2, M/2] \cap \mathbb{Z}$
  for  any $M \in \mathbb{N}$.
 Using translation invariance we have that 
\begin{align*}
& K L^2\Biggl(\sum_{y \in \Hcal} \cos(  k \cdot y ) \langle \sin (rs_o^1) \sin (rs_y^1) \rangle  
 +
 \sum_{y \in \Vcal} \cos(  k \cdot y ) \langle \sin (rs_o^1) \sin (rs_y^1) \rangle
 + \sum_{y \in \mathcal W} \cos(  k \cdot y ) \langle \sin (rs_o^1) \sin (rs_y^1)\rangle \Biggr) \\
& = 
 \sum_{x\in \S_{L,K}} \sum_{y \in \Hcal_x}  \cos(  k \cdot (x-y) ) \langle \sin (rs_x^1) \sin (rs_y^1) \rangle
 +
 \sum_{x\in \S_{L,K}}  \sum_{y \in \Vcal_x} \cos(  k \cdot (x-y) ) \langle \sin (rs_x^1) \sin (rs_y^1) \rangle\\
 &+ \sum_{x\in \S_{L,K}}  \sum_{y \in \mathcal W_x} \cos(  k \cdot (x-y) ) \langle \sin (rs_x^1)\sin (rs_y^1) \rangle.
\end{align*} 
Now we see that the previous quantity is less or equal to 
\begin{align*}
& 2 K L\sum_{x=(- \frac L2 +1) e_1}^{
\frac L2 e_1} \sum_{ y \in \Z^L_{odd}e_1+x}  \cos(  k \cdot (x-y)) \langle \sin (rs_x^1)\sin (rs_y^1) \rangle \\
&+L^2 \sum_{x=(- \frac K2+1) e_3}^{
\frac  K  2 e_3} \sum_{ y \in \Z^K_{odd}e_3+x}  \cos(  k \cdot (x-y)) \langle \sin (rs_x^1)\sin (rs_y^1) \rangle
\\
& = 2 K  L (\hat G^1_{\mathbb{Z}^L}(k) - \hat G^1_{\mathbb{Z}^L_{even}}(k)- \hat G^1_{\Z^L_{odd}}(k))
+ L^2 (\hat G^3_{\mathbb{Z}^K}(k) - \hat G^3_{\mathbb{Z}^K_{even}}(k)- \hat G^3_{\mathbb{Z}^K_{odd}}(k)),
\end{align*}
where we defined 
 for each $i \in \{1, 2, 3\}$,  each $M \in \mathbb{N}$, 
  and  each set $A \in \{ \mathbb{Z}^{M}_{odd}$,  $\Z^{M}_{even},  \mathbb{Z}^M  \}$
  the quantity
\begin{align*}
\hat G^i_{A}(k)& := \sum_{x,y \in  e_i A } \cos(  k \cdot (x-y)) \langle \sin (rs_x^1)\sin (rs_y^1) \rangle.
\end{align*}
We now observe that 
\begin{multline*}
\sum_{k\in \S_{L,K}^*} \hat G^1_{\mathbb{Z}^L}(k)
= 
\sum_{k\in \S_{L,K}^*}  \sum_{x,y= (-\frac L2+1) e_1}^{\frac L2 e_1} \exp(i  k \cdot (x-y)) \langle \sin (rs_x^1)\sin (rs_y^1) \rangle \\
 = |\S_{L,K}| \sum_{x=(-\frac L2+1) e_1}^{\frac L2 e_1} \langle \sin^2 (rs_x^1) \rangle
= KL^3 \langle \sin^2 (rs_o^1) \rangle \le C K  L^3,
\end{multline*}
where for the second equality we used the Parseval identity and  for the inequality used the fact that  $  \langle \sin^2 (rs_o^1) \rangle \leq \frac{1}{2}$, as observed before. 
Similarly,  we deduce that  
$\sum_{k\in\S_{L,K}^*} \hat G^1_{\mathbb{Z}^L_{odd}}(k)\le C K  L^3$,  $\sum_{k\in \S_{L,K}^*} \hat G^1_{\mathbb{Z}^L_{even}}(k)\le C K L^3$,   
$\sum_{k\in \S_{L,K}^*} \hat G^3_{\mathbb{Z}^K}(k)\le C K^2 L^2$,   
$\sum_{k\in \S_{L,K}^*} \hat G^3_{\mathbb{Z}^K_{odd}}(k)\le CK^2L^2$, \\ 
$\sum_{k\in \S_{L,K}^*} \hat G_{\mathbb{Z}^K_{even}}(k)\leq C K^2L^2$.
Putting together all the previous bounds we obtain that 
\begin{align*}
\sum_{k\in \S_{L,K}^*} \biggl(\sum_{y \in \Hcal} \cos(  k \cdot y ) \langle \sin (rs_o^1) \sin (rs_y^1) \rangle
&  + 
 \sum_{y \in \Vcal} \cos(  k \cdot y ) \langle \sin (rs_o^1) \sin (rs_y^1) \rangle
\\  & + \sum_{y \in \mathcal W} \cos(  k \cdot y ) \langle \sin (rs_o^1) \sin (rs_y^1) \rangle \biggr)  \leq 
 C'K L^2,
 \end{align*}
 and we conclude the lemma. 
\end{proof}

We also need a lower bound for the large momentum region.  

\begin{lem}
\label{lem:lowerA}
There exists $c=c(\beta)<\infty$ and $h_0>0$ 
such that, for any $\tilde h \in [0,h_0]$, $r \in \{1,2\}$,  $K,L \in 2 \mathbb{N} \cup \{1\}$,  
$$
\sum_{k\in \S_{L,K}^*\setminus \S_{L,K,+}^*} \langle A\circ A\rangle_{L,K, \beta, h_{L,K}, r} 
\geq
- c \, K  \, L^2
$$
where  $h_{L,K} =  \frac{\tilde h}{| \S_{L,K}|}$.
\end{lem}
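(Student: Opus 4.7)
The plan is to split
\[
\sum_{k \in \S_{L,K}^* \setminus \S_{L,K,+}^*} \langle A \circ A \rangle = \sum_{k \in \S_{L,K}^*} \langle A \circ A \rangle - \sum_{k \in \S_{L,K,+}^*} \langle A \circ A \rangle,
\]
bound the first sum from below via Parseval and the second sum from above in absolute value by $cKL^2$ via one-dimensional Fourier identities and an alternating-series estimate. For the full sum, $\sum_{k \in \S_{L,K}^*}\cos(k \cdot y) = |\S_{L,K}|\,\mathbbm{1}_{y=0}$ together with the fact that every element of $\Hcal \cup \Vcal \cup \mathcal{W}$ is non-zero yields
\[
\sum_{k \in \S_{L,K}^*} \langle A \circ A \rangle(k) = |\S_{L,K}|\,\langle \sin^2(rs_o^1)\rangle_{L,K,\beta,h_{L,K},r}.
\]
Writing $\sin^2 = \tfrac{1}{2}(1 - \cos(2\,\cdot))$ and applying Proposition~\ref{prop:magnexp} with $\ell = 2r$ to obtain $\langle \cos(2rs_o^1)\rangle \le c\tilde h$, the full sum is $\ge |\S_{L,K}|/4$ for $\tilde h_0$ small enough; in particular it is non-negative.

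For the positive-cone sum, decompose $\langle A \circ A\rangle(k) = \langle \sin^2(rs_o^1)\rangle + F_1(k_1) + F_2(k_2) + F_3(k_3)$, where $F_i(k_i) := \sum_{y \in \Z_{odd}^{M_i}} G_i(y) \cos(k_i y)$, $G_i(y) := \langle \sin(rs_o^1)\sin(rs_{ye_i}^1)\rangle$, $M_1=M_2=L$, $M_3=K$; the notation $\Z_{odd}^M$ is as introduced before Lemma~\ref{l.IBP}. Since $F_i$ depends only on $k_i$,
\[
\sum_{k \in \S_{L,K,+}^*} F_i(k) = \tfrac{2|\S_{L,K,+}^*|}{M_i}\, S_i, \qquad S_i := \sum_{k_i \in (-\pi/2,\pi/2]\cap (2\pi/M_i)\Z} F_i(k_i).
\]
A finite-geometric-sum computation yields, for every odd $y \neq 0$, an identity of the form $\sum_{k_i}\cos(k_i y) = (-1)^{(y-1)/2}\,\psi_{M_i}(y)$, where $\psi_{M_i}$ is positive and monotonically decreasing in $|y|$ on $(0, M_i/2)$, with $\psi_{M_i}(y) = \cos(\pi y/M_i)/\sin(\pi y/M_i)$ when $M_i \in 4\N$ and $\psi_{M_i}(y) = 1/\sin(\pi y/M_i)$ when $M_i \in 4\N + 2$; in either case $\psi_{M_i}(1) \le c M_i$. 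Pairing $y$ with $-y$ then gives $S_i = 2 \sum_{j \ge 0} (-1)^j G_i(2j+1)\,\psi_{M_i}(2j+1)$.

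By reflection positivity (Proposition~\ref{prop:reflectionpos}(ii) applied with $f = \sin(rs_o^1)$ and the midedge reflection whose hyperplane is perpendicular to $e_i$ and passes through the half-integer point $\tfrac{2j+1}{2} e_i$, which sends $o$ to $(2j+1)e_i$), $G_i(2j+1) \ge 0$; and by Proposition~\ref{prop:monotonicity}, $j \mapsto G_i(2j+1)$ is non-increasing on its admissible range. Consequently the product $j \mapsto G_i(2j+1)\,\psi_{M_i}(2j+1)$ is non-negative and non-increasing, and the alternating-series estimate yields $|S_i| \le 2 G_i(1)\,\psi_{M_i}(1) \le c M_i$, using Lemma~\ref{lem:chessboardbound} to bound $G_i(1)$ uniformly. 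This gives $|\sum_{k \in \S_{L,K,+}^*} F_i(k)| \le c KL^2$ for each $i$, and combined with $|\S_{L,K,+}^*|\,|\langle \sin^2(rs_o^1)\rangle| \le cKL^2$ one obtains $|\sum_{k \in \S_{L,K,+}^*}\langle A\circ A\rangle| \le c'KL^2$; subtracting from the non-negative full sum completes the proof. The main non-routine step is obtaining the sharp $|S_i| = O(M_i)$ rather than the naive $O(M_i\log M_i)$ bound that the pointwise estimate $|\psi_{M_i}(y)| \le cM_i/|y|$ would give: only the cancellations from the alternating signs of the partial Dirichlet kernel, harnessed through positivity and monotonicity of $G_i$ (both consequences of reflection positivity), produce an estimate sharp enough to close the complement bound.
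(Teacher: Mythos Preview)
Your proof is correct and rests on the same core ingredients as the paper's --- the alternating sign pattern of partial Dirichlet-type sums, positivity and monotonicity of $G_i$ from reflection positivity, and Lemma~\ref{lem:chessboardbound} to control $G_i(1)$ --- but is organised differently. The paper attacks $\sum_{k\in\S_{L,K}^*\setminus\S_{L,K,+}^*}$ head-on, computing $\sum_{k\in\text{complement}}\cos(k_i y)$ in closed form (equation~\eqref{eq:sumoverk}) and then running the same alternating-series/monotonicity argument on the resulting expression. Your decomposition into the full Fourier sum (dispatched by orthogonality in one line) minus the positive-cone sum is somewhat cleaner: because $\S_{L,K,+}^*$ is a product set, the $k$-sum of each $F_i$ factors exactly as $\tfrac{2|\S_{L,K,+}^*|}{M_i}S_i$, and you avoid having to track the geometry of the complement region and the $O(KL)$ boundary corrections that the paper carries along. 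The essential analytic step --- upgrading the naive $|S_i|=O(M_i\log M_i)$ bound to $O(M_i)$ by exploiting the sign alternation of the kernel together with the monotonicity of $G_i$ --- is identical in both arguments.
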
 
\begin{proof}
 Recall that 
\begin{multline}\label{eq:firstdisplaylemma}
\langle A\circ A\rangle
 =
  \sum_{y \in \Z^L_{odd} e_1} \cos(  k \cdot y) \langle \sin (rs_o^1)\sin (rs_y^1) \rangle + 
  \sum_{y \in \Z^L_{odd}e_2} \cos(  k \cdot y) \langle \sin (rs_o^1)\sin (rs_y^1) \rangle\\
  +  \sum_{y \in \Z^K_{odd}e_3} \cos(  k \cdot y) \langle \sin (rs_o^1)\sin (rs_y^1) \rangle+
  \langle \sin^2 (rs_o^1)  \rangle.
  \end{multline}
To begin, we  sum over $k$ and obtain for  any $y\in\Z$, $i \in \{1,2\}$, 
  \begin{align}
  \label{e.sumk1}
  \sum_{k\in \S_{L,K}^*\setminus \S_{L,K,+}^*}
  \cos (  k_i \,  y)
  =
  \sum_{n_3 \in ((-\frac K2,  \frac K2] \setminus(-\frac K4,  \frac K4])\cap\Z }
  \sum_{n_1, n_2 \in ((-\frac L2,  \frac L2] \setminus(-\frac L4,  \frac L4])\cap\Z }
    \cos ( \frac{2\pi n_1}L y) \notag \\
 =
\frac 14  KL  \sum_{n_1\in ((-\frac L2,  \frac L2] \setminus(-\frac L4,  \frac L4])\cap\Z }
    \cos ( \frac{2\pi n_1}L y)
    \end{align}
   Note that 
      \begin{align*}
     \sum_{n_1\in ([-\frac L2,  \frac L2] \setminus[-\frac L4,  \frac L4])\cap\Z }
    \cos ( \frac{2\pi n_1}L y)
    &= 
     \sum_{n_1\in ([-\frac L2,  \frac L2] \setminus[-\frac L4,  \frac L4])\cap\Z }
    e^{i\frac{2\pi n_1}L y} \\
    & 
    =
    e^{i \frac \pi 2 y } e^{i\frac{2\pi }L y}\frac{1-e^{i \frac \pi 2 y }}{1- e^{i\frac{2\pi }L y}}
    +
     e^{-i \frac \pi 2 y } e^{-i\frac{2\pi }L y}\frac{1-e^{-i \frac \pi 2 y }}{1- e^{-i\frac{2\pi }L y}} \\
     &= 
     \frac{i^y(1-i^y) (e^{i\frac{2\pi }L y} -1) +(-i)^y(1-(-i)^y) (e^{-i\frac{2\pi }L y} -1) }
     {2 -2 \cos  ( \frac{2\pi }L y )  }
    \end{align*}
    Since $y$ is odd,  we use $i^{2y} =-1$ to simplify the summation above as 
    \begin{align*}
     \sum_{n_1\in ([-\frac L2,  \frac L2] \setminus[-\frac L4,  \frac L4])\cap\Z }
    \cos ( \frac{2\pi n_1}L y)
    = 
    \frac{i^{y+1} \sin  ( \frac{2\pi}L y)}{1- \cos ( \frac{2\pi }L y)} -1.
     \end{align*}
     By comparing  the previous expression with \eqref{e.sumk1}
     we see that there  are mismatch terms at $n_1 =- \frac L2$ and  $n_1 =- \frac L4$, concluding that
      \begin{equation}\label{eq:sumoverk}
       \sum_{k\in \S_{L,K}^*\setminus \S_{L,K,+}^*}
  \cos (  k_i y)
  =
 \frac 14 KL  \frac{i^{y+1} \sin  ( \frac{2\pi}L y)}{1- \cos ( \frac{2\pi }L y)}  +O(KL),
   \end{equation}
   where the last term does not depend on $y$.
We can now lower  bound the sum over $k$ of    $\langle A\circ A\rangle$
to prove the lemma. 
 By \eqref{eq:secondinequality},  we see that $ \langle\sin^2 (rs_o^1)   \rangle = \frac 12 - \frac 12 \langle \cos (2rs_o^1)  \rangle \ge \frac 12 - ch_0$.  This implies, $ \langle \sin^2 (rs_o^1)  \rangle>0$ by taking $h_0$ sufficiently small.   Using this observation, (\ref{eq:firstdisplaylemma})  and  (\ref{eq:sumoverk})    we obtain that
     \begin{align}
     \label{e.LBalt}
  \sum_{k\in \S_{L,K}^*\setminus  \S_{L,K,+}^*}  \langle A\circ A\rangle
&  \geq
   \sum_{y \in \Z^L_{odd}e_1} \frac 14 KL  \frac{i^{y+1} \sin  ( \frac{2\pi}L y)}{1- \cos ( \frac{2\pi }L y)}  \langle \sin (rs_o^1)\sin (rs_y^1) \rangle  \notag \\
  & + 
  \sum_{y \in \Z^L_{odd}e_2} \frac 14 KL  \frac{i^{y+1} \sin  ( \frac{2\pi}L y)}{1- \cos ( \frac{2\pi }L y)}  \langle \sin (rs_o^1)\sin (rs_y^1) \rangle \notag \\
  &+ \sum_{k\in \S_{L,K}^*\setminus  \S_{L,K,+}^*}   \sum_{y \in \Z^K_{odd}e_3} \cos(  k \cdot y) \langle \sin (rs_o^1)\sin (rs_y^1) \rangle  \notag \\
 &- c K L \Biggl(  \sum_{y \in \Z^L_{odd}e_1}| \langle \sin (rs_o^1)\sin (rs_y^1) \rangle|  + 
  \sum_{y \in \Z^L_{odd}e_2}  |\langle \sin (rs_o^1)\sin (rs_y^1) \rangle| \Biggr).
   \end{align}
  Since $\{\frac{i^{y+1} \sin  ( \frac{2\pi}L y)}{1- \cos ( \frac{2\pi }L y)} \}$ is an alternating sequence with decreasing magnitude in  $(0, \pi) \cap (2 \mathbb{N}+1)$,  together with the monotonicity of the two point function,  $\langle \sin (rs_o^1) \sin (rs_{(2n-1)e_i}^1) \rangle \geq $ $\langle \sin (rs_o^1) \sin (rs_{(2n+1)e_i}^1) \rangle$ for odd integers  $2n-1 \in (1, L/2)$,
  and symmetry
  we see that the first two sums on the right side of \eqref{e.LBalt} are  bounded from below by the sum of the $y=e_1$ and   $y=-e_1$ term:
   \begin{align}
  \ \sum_{y \in \Z^L_{odd}e_1}  \frac 14 KL  \frac{i^{y+1} \sin  ( \frac{2\pi}L y)}{1- \cos ( \frac{2\pi }L y)}  \langle \sin (rs_o^1)\sin (rs_y^1) \rangle + 
  \sum_{y \in \Z^L_{odd}e_2}  \frac 14 KL  \frac{i^{y+1} \sin  ( \frac{2\pi}L y)}{1- \cos ( \frac{2\pi }L y)}  \langle \sin (rs_o^1)\sin (rs_y^1) \rangle  \notag \\
  \geq
  -cKL \frac L {2\pi  } (\langle \sin (rs_o^1) \sin (rs_{e_1}^1) \rangle + \langle \sin (rs_o^1) \sin (rs_{e_2}^1) \rangle) 
  \geq
  - c KL^2.
  \label{e.display1}
  \end{align}
  On the other hand,  using the monotonicity of the two point function we have 
   \begin{multline}
    - c K L \Biggl(  \sum_{y \in \Z^L_{odd}e_1} \langle \sin (rs_o^1)\sin (rs_y^1) \rangle + 
  \sum_{y \in \Z^L_{odd}e_2}  \langle \sin (rs_o^1)\sin (rs_y^1) \rangle  \Biggr)   \\
  \ge
  - c KL^2 (\langle \sin (rs_o^1) \sin (rs_{e_1}^1) \rangle + \langle \sin (rs_o^1) \sin (rs_{e_2}^1) \rangle)
  \ge 
  -c^\prime KL^2,
  \label{e.display2}
     \end{multline}
     where for the last inequality we use Lemma \ref{lem:chessboardbound} to deduce that $\langle \sin (rs_o^1) \sin (rs_{e_1}^1) \rangle   \leq c^2$ uniformly in $L$ and $K$.
     
     For the sum of $y$ in $e_3$ direction,  the same computation as  \eqref{e.sumk1} and \eqref{eq:sumoverk} yields
     $$
      \sum_{k\in \S_{L,K}^*\setminus \S_{L,K,+}^*}
  \cos (  k_3 \,  y)
 =
\frac 14  L^2  \sum_{n_3\in ((-\frac K2,  \frac K2] \setminus(-\frac K4,  \frac K4])\cap\Z }
    \cos ( \frac{2\pi n_3}K y)
    =
     \frac 14 L^2  \frac{i^{y+1} \sin  ( \frac{2\pi}K y)}{1- \cos ( \frac{2\pi }K y)}  +O(L^2),
     $$
     where the last term in the RHS does not depend on $K$ or $y$.
     Using that $\{\frac{i^{y+1} \sin  ( \frac{2\pi}K y)}{1- \cos ( \frac{2\pi }K y)} \}$ is an sequence series with decreasing magnitude in  $(0, K/2) \cap (2 \mathbb{N}+1)$,   the monotonicity of the two point function,  symmetry and Lemma \ref{lem:chessboardbound}, as before, we obtain that
     \begin{multline*}
      \sum_{k\in \S_{L,K}^*\setminus  \S_{L,K,+}^*}   \sum_{y \in \Z^K_{odd}e_3} |\cos(  k \cdot y) \langle \sin (rs_o^1)\sin (rs_y^1) \rangle | \\
     \ge
     -cL^2 \frac K {2\pi  } \langle \sin (rs_o^1) \sin (rs_{e_3}^1) \rangle 
     -cKL^2  \langle \sin (rs_o^1) \sin (rs_{e_3}^1) \rangle 
     \ge
     -c'KL^2.
     \end{multline*}
     Combing with \eqref{e.display1} and   \eqref{e.display2} above we conclude the proof of the lemma.  
\end{proof}

\section{Mermin--Wagner Theorem}
\label{sect:merminwagner}

The Mermin–Wagner theorem states that in two dimensions the magnetisation vanishes when the external field is taken to zero, independently of the inverse temperature.
This holds for spin systems with continuous symmetries, but not, for instance, for the Ising model in $d>1$ at low temperature.
We now state our analogue of the Mermin–Wagner theorem for our general framework.
In contrast to the classical setting, where one first takes the thermodynamic limit and then sends the external field to zero,  here, for technical reasons, we must let the field go to zero and the volume diverge simultaneously, with a suitable scaling.

\begin{thm}[Mermin--Wagner theorem]
\label{thm:magnetisationbound}
Suppose that $\beta \geq 0$,  $r \in \{1,2\}$, and that $\tilde h>0$ is  small enough.  There  a constant $c = c (\beta) <\infty$  such that, for any    $L, K \in 2 \mathbb{N} \cup \{1\}$ satisfying $K\le (\log(L))^{\frac 12}$, we have 
$$
m_{\S_{L,K}}(\beta, h_{L,K}, r)  \leq  c \sqrt{  \frac{ K}{\log(L)}},
$$
  where  $h_{L,K} := \frac{\tilde h }{|\S_{L,K}|}$.  
\end{thm}
\begin{proof}
From  (\ref{eq:CS})  and Lemma \ref{lem:identities} we deduce that
\begin{multline*}
 \frac{r^2 \,  m_{\S_{L,K}}(\beta, h_{L,K}, r)^2}{ \frac{2}{\beta} |k|^2  +8 h_{L,K} \,  m_{\S_{L,K}}(\beta, h_{L,K}, r) } \\  \leq \frac{r^2 \,  m_{\S_{L,K}}(\beta, h_{L,K}, r)^2}{ \sum\limits_{z \sim o}  p_{z}  \,   \big (  1 - \cos(k \cdot z) \big )    + 2 r^2 \, h_{L,K} \,  m_{\S_{L,K}}(\beta, h_{L,K}, r) }  \\ \leq \big ( \langle A\circ A\rangle_{L, K, \beta, h_{L,K}, r} +  c  \big ).
\end{multline*}
In the sequel we omit the dependences of $m_{\S_{L,K}}$, which continues to depend on $\beta$ and $h_{L,K}$,  and $r$, in order to simplify the notation. 

Summing over $k \in  \S_{L,K,+}^*$, applying Lemma \ref{lemm:upperA} and \ref{lem:lowerA} to upper bound the RHS of the previous expression, and dividing  by $|\S_{L,K}|$  leads to
\begin{align}
\label{e.sumk}
\frac{1}{|\S_{L,K}|} \sum_{k\in  \S_{L,K,+}^*} \frac{m^2}{\frac{2}{\beta} |k|^2 + \frac{8\tilde h}{|\S_{L,K}|} m} 
\leq  c,
\end{align}
which holds for any $L, K\in 2 \mathbb{N} \cup 1$.

Let us now define the set $ \mathcal{U} :=   \{ (L, K) \in  (2 \mathbb{N}+1)^2 \, : \, K \leq \sqrt{ \log(L) }   \}$.
Suppose  that for any $\varphi>0$ we have that
 \begin{equation}\label{eq:contradictionfinding}
m_{\S_{L,K}}(\beta, h_{L,K}, r)  \geq \varphi \sqrt{ \frac{K}{\log(L)}} \, \, 
 \mbox{ for infinitely many  $(L, K) \in \mathcal{U}$.}
 \end{equation}
 We will find a contradiction with this assumption, deducing that 
 there exists a value $\varphi > 0$ such that 
 $m \geq \varphi \sqrt{ \frac{K}{\log(L)}}$
 can only hold for finitely many choices of $(L,K) \in \mathcal{U}$.
 This implies the claim of the proposition, as desired.

Let us now show that (\ref{eq:contradictionfinding}) leads to a contradiction.
 From (\ref{e.sumk}) and  (\ref{eq:contradictionfinding})   we deduce that, for infinitely many choices of $(L,K) \in \mathcal{U}$, 
\begin{equation}\label{eq:contra1}
\frac{1}{|\S_{L,K}|} \sum_{k\in  \S_{L,K,+}^*} \frac{\varphi^2}{c_2 |k|^2 + \frac{c_1}{|\S_{L,K}|}}  
\leq   c_3  \frac{ \log(L)}{K},
\end{equation}
where we also applied Proposition \ref{prop:magnexp}
in order to upper bound $m$ in the denominator. 
We now lower bound the term in the LHS  of (\ref{eq:contra1}) from the term below,
$$
\frac{1}{|\S_{L,K}|} \sum_{ \substack{ k\in\S_{L,K,+}^*: \\ |k|^2 \geq  \frac{c_1}{  c_2 }\frac{1}{| \S_{L,K}|}}} \frac{\varphi^2}{ 2 c_2 |k|^2} +
 \sum_{ \substack{k\in  \S_{L,K,+}^*: \\ |k|^2 < \frac{c_1}{ c_2} \frac{1}{| \S_{L,K}|}}} \frac{\varphi^2}{2 c_1 }.
$$
The main contribution comes from the first sum,
which, after reducing the sum to all $k$ such that  $k_3 = 0$
and using the change of variable  $j_i=  \frac{ L k_i}{2 \pi}$ for $i=1,2$,
equals
$$
\frac{1}{ 8 \pi^2 c_2 } \frac 1K \sum_{ \substack{ j_1, j_2 \in \{- \frac{L}{4}+1,  -\frac{L}{4} + 2,  \ldots \frac{L}{4} \} : \\ j_1^2 + j_2^2 \geq c / K }  } \frac{\varphi^2}{ (j_1^2+j_2^2)}.
$$
We finally note that the previous term is bounded from below by  $c  \, \varphi^2 \,  \frac{1}{K} \, \log (L) $.  
Summarizing,  we deduce that for any $\varphi > 0$, and 
 infinitely many values $(L, K) \in \mathcal{U}$
\begin{equation}
\label{e.contra}
c_4 \varphi^2  \frac{\log (L)}{K} \le c_3 \frac{\log(L)}{K},
\end{equation}
which is not possible.
Hence,  we obtained the desired contradiction and concluded the proof. 
\end{proof}

The next theorem shows how the decay of magnetisation implies the decay of the two–point functions.\begin{thm}[From magnetisation decay to correlations decay]
\label{thm:maintheoremgeneralloop}
Let $\beta \geq 0$,  $\ell   \in \{1,2\}$ be arbitrary.  There exists $c = c(\beta) < \infty$
 such that for any $L, K  \in 2 \mathbb{N} \cup \{1\}$ satisfying $K \leq  \sqrt{\log (L)}$ we have 
\begin{align}
\sum_{x \in \S_{L,K}^o} \frac{  \mathcal{G}_{L, K, \beta}^{(1)}(o,x)}{|\S^o_{L,K}|}  &  \leq  c \sqrt{ \frac{K}{\log(L)}} \label{eq:generalclaim1}\\
\sum_{x \in \S_{L,K}} \frac{  \mathcal{G}_{L, K, \beta}^{(\ell)}(o,x)}{|\S_{L,K}|}  & \leq 
c \sqrt{ \frac{K}{\log(L)}},\label{eq:generalclaim}
\end{align}
where the second inequality holds only if $U$ is normalized monotone when $\ell=1$ and only if $U$ is fast decaying when $\ell=2$. 
\end{thm}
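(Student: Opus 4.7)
\textbf{Proof plan for Theorem \ref{thm:maintheoremgeneralloop}.}
The plan is to reduce the statement to the spin-system Theorem \ref{thm:maintheoremspin} via Proposition \ref{prop:twopoint}, and then to prove that theorem by combining the magnetisation expansion (Proposition \ref{prop:magnexp}) with the magnetisation upper bound (Proposition \ref{prop:magnetisationbound}). In outline, the spin two-point function is sandwiched on one side by the dimer two-point function (Proposition \ref{prop:twopoint}) and on the other side by the magnetisation (the lower bound in Proposition \ref{prop:magnexp}), so that the vanishing of the magnetisation forces the Ces\`aro sum to vanish.

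First I would derive Theorem \ref{thm:maintheoremspin}. Fix $\ell \in \{1,2\}$ and apply Proposition \ref{prop:magnexp} with $r = \ell$: for $\tilde h > 0$ small enough and $h_{L,K} = \tilde h/|\S_{L,K}|$, inequality (\ref{eq:firstinequalityr1}) gives
\begin{equation*}
\frac{1}{|\S_{L,K}|}\sum_{x \in \S_{L,K}} \langle \cos(\ell s_o^1) \cos(\ell s_x^1) \rangle_{L,K,\beta,0,0}
\;\leq\; \frac{1}{\beta \tilde h}\, m_{\S_{L,K}}(\beta, h_{L,K}, \ell).
\end{equation*}
Proposition \ref{prop:magnetisationbound} bounds the right-hand side by $(c/\beta\tilde h)\sqrt{K/\log L}$, which is the upper bound in (\ref{eq:maintheorem}); non-negativity of the summands (hence of the Ces\`aro sum) follows from the positive goodness of $\cos(\ell s_o^1) \cos(\ell s_x^1)$, as remarked after Proposition \ref{prop:conversion2}. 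This establishes Theorem \ref{thm:maintheoremspin}.

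Next I would translate to the generalised double dimer model. For (\ref{eq:generalclaim1}), every $x \in \S_{L,K}^o$ has different parity from $o \in V^e$, so inequality (\ref{eq:twopoint1}) of Proposition \ref{prop:twopoint} applies without any assumption on $U$, giving $\mathcal{G}^{(1)}_{L,K,\beta}(o,x) \leq 4 \langle \cos(s_o^1)\cos(s_x^1)\rangle_{L,K,\beta,0,0}$; summing over $x \in \S_{L,K}^o$ and dividing by $|\S^o_{L,K}|$, then applying Theorem \ref{thm:maintheoremspin} with $\ell=1$, proves (\ref{eq:generalclaim1}). For (\ref{eq:generalclaim}) with $\ell=1$ and $U$ nice, (\ref{eq:twopoint1}) applies for all pairs $(o,x)$ (both parities), so summing over $x \in \S_{L,K}$ and invoking Theorem \ref{thm:maintheoremspin} with $\ell=1$ again yields the claim. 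For (\ref{eq:generalclaim}) with $\ell=2$ and $U$ fast decaying, (\ref{eq:twopoint2}) controls the contribution of vertices $x$ with $d(o,x)$ odd; the same-parity contribution can be handled by noting that any loop containing both $o$ and a same-parity $x$ must visit a neighbour $y \sim x$ with $d(o,y)$ odd, so a union bound reduces same-parity terms to odd-parity ones (up to a constant depending on the degree), and Theorem \ref{thm:maintheoremspin} with $\ell=2$ closes the estimate.

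The main obstacle in this argument is not in the final bookkeeping but in the magnetisation bound of Proposition \ref{prop:magnetisationbound}, which in turn rests on the Cauchy--Schwarz inequality of Theorem \ref{thm:cauchyschwarz} together with the chessboard and convexity/monotonicity estimates of Section \ref{sect:RPsection} — these are the essentially new ingredients needed to handle the non-positive complex measure. Once those are in place, the remaining work above is a straightforward combination: the magnetisation expansion converts the Ces\`aro sum into a quantity controlled by $m$, and the parity/regularity hypotheses on $U$ just govern which of (\ref{eq:twopoint1}) or (\ref{eq:twopoint2}) is available to dominate $\mathcal{G}^{(\ell)}$ by the spin correlator.
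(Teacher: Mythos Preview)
Your proposal is correct and follows essentially the same route as the paper: derive Theorem~\ref{thm:maintheoremspin} from Proposition~\ref{prop:magnexp} and Proposition~\ref{prop:magnetisationbound}, then feed it through Proposition~\ref{prop:twopoint} under the appropriate hypotheses on $U$. The only (cosmetic) difference is in the $\ell=2$ same-parity reduction: the paper uses the exact bipartite identity $\sum_{x\in\S_{L,K}}\mathcal{G}^{(2)}_{L,K,\beta}(o,x)=2\sum_{x\in\S^o_{L,K}}\mathcal{G}^{(2)}_{L,K,\beta}(o,x)$ (loops on a bipartite graph visit equally many even and odd vertices), whereas you use the neighbour union bound, which gives the same conclusion up to a harmless factor of the degree.
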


We may conclude the proof of the Mermin Wagner theorem.

\begin{proof}[Proof of  Theorems 
 \ref{thm:maintheoremdimer},
 \ref{thm:maintheoremspin},
and \ref{thm:maintheoremgeneralloop}]
Fix $\beta >0$, $ \tilde{h}  > 0$ small enough. 
We apply the first claim in Proposition \ref{prop:magnexp} 
and Theorem \ref{thm:magnetisationbound} to deduce our main theorem for the spin system,  Theorem \ref{thm:maintheoremspin}.

We now apply Proposition \ref{prop:twopoint}
and the observation
$\sum_{x \in \S_{L,K}}  \mathcal{G}_{L, K, \beta}^{(2)}(o,x) =
2 \sum_{x \in \S_{L,K}^o}   \mathcal{G}_{L, K, \beta}^{(2)}(o,x) $
to deduce from Theorem  \ref{thm:maintheoremspin}  the proof of Theorem \ref{thm:maintheoremgeneralloop}.

We now use our  Theorem \ref{thm:maintheoremgeneralloop}
to deduce our main theorems on the dimer and monomer double-dimer model, respectively
Theorems  \ref{thm:maintheoremdimer} and   \ref{thm:maintheoremddmodel}.

For Theorem \ref{thm:maintheoremdimer}, 
recall that if  we choose  
$U= \mathbbm{1}_{\{n=1\}}$
then the  two-point function $\mathcal{G}^{(1)}_{L, K, \beta}(o,x)$
corresponds to  the monomer-monomer correlation 
of the dimer model, $\mathcal{C}(o,x)$,
as we observed in (\ref{eq:monomerrelation}).
Hence,  the first inequality in Theorem \ref{thm:maintheoremgeneralloop},
together with the observation that $\mathcal{C}(o,x)=0$ if $x \in \S_{L,K}^e$,
implies directly 
Theorem \ref{thm:maintheoremdimer}, as desired. 

We now prove Theorem \ref{thm:maintheoremddmodel}.
Note that  the second claim,  \eqref{eq:firsttheo2}, follows from 
the first claim,   \eqref{eq:firsttheo},  by a direct application of the Markov inequality.
We choose $U(0) = U(1)=1$ and $U(n) = 0$ for all $n >1$,
giving the monomer-double dimer model with monomer activity $\rho = 1/\beta$ when $\beta >0$, as we observed in Section \ref{s.special}.
Since under this choice $U$ is fast decaying (and normalized monotone),  
we can apply  Theorem \ref{thm:maintheoremgeneralloop} with $\ell =2$ 
and $\ell=1$ and
 (\ref{eq:monomerdimerloop}) 
to deduce  \eqref{eq:firsttheo} and \eqref{eq:firsttheo3} in  Theorem  \ref{thm:maintheoremddmodel}  when 
$\rho = 1/\beta >0$. 

When $\rho = 0$, \eqref{eq:firsttheo3} in Theorem  \ref{thm:maintheoremddmodel}
follows directly from Theorem \ref{thm:maintheoremdimer} since
in this case $\mathbb{G}_{L,K, 0} (o,x) = \mathcal{C}(o,x)$,
as we observed in Section \ref{s.special}.

In order to obtain \eqref{eq:firsttheo} in Theorem  \ref{thm:maintheoremddmodel} when  $\rho =0$
we use that 
$$
\mathbb{P}_{L, K,  0}( x \leftrightarrow y)  = 
\sum\limits_{z \sim x,  k \sim y}
 \,  \mathcal{C}(x,y) \mathcal{C}(z,k) \leq d^2  \,  \mathcal{C}(x,y),
$$
where the identity follows from the fact that, by \cite[Figure 2]{Kenyonclaire},
  $\mathcal{C}(x,y) \mathcal{C}(z,k)$ corresponds to the probability
that in the double-dimer model a loop connects $x$ to $y$ 
with the blue dimers of such loop touching $x$ and $y$ being on the edges $\{x,z\}$
and $\{y,k\}$, while the inequality follows from the fact that $\mathcal{C}(z,k) \leq 1$ for any $z, k \in \S_{L,K}$ by \cite[equation (2.25)]{LeesTaggiCMP2020}.  
Hence using the previous inequality and Theorem  \ref{thm:maintheoremdimer}
we deduce that 
\begin{equation}
\sum\limits_{x \in \mathbb{S}_{L, K}} \frac{ \mathbb{P}_{L, K,  0}( o \leftrightarrow x)  }{|  \mathbb{S}_{L, K} |} \leq cd^2 \sqrt{ \frac{K}{\log(L)}},
\end{equation}
from which we deduce  \eqref{eq:firsttheo}.

\end{proof}

\section{Appendix}
\label{sect:appendix}
\subsection{Multiple-type dimer model}
\label{sect:generalN}
We now explain how to   extend our framework to the case of dimers of $2N$ different types,
where $N$ is an integer.
The local spin space is the set 
$\Xi = [0, 2 \pi)^{2N}.$
The configuration space is the set  $\Omega_{s} = \Xi^V$, with 
 $\boldsymbol{s} = (s_z)_{z \in V}  \in \Omega_s$,
and  $s_z = (s_z^1,  \ldots, s_z^{2N})$. The spin variable at $z$ is then a vector
$$
S_z = (S_z^1,  \ldots, S_z^{2N}),
$$
where the function $S_z^k : \Omega_s \mapsto \mathbb{C}$ is defined for each $k \in \{1,\ldots,  2N \}$ as 
\begin{equation}\label{eq:spinvariableN}
S^k_z (\boldsymbol{s}) :=
\begin{cases}
e^{ i s_z^k  }   & \mbox{ if $k$ odd and $z$ even or $k$ is even and $z$ odd,}\\
e^{ - i s_z^k  } &   \mbox{ if $k$ odd and $z$ odd or $k$ is even  and $z$ even,}
\end{cases}
\end{equation}
We define for each $\boldsymbol{s} \in \Omega_s$ 
the function
\begin{equation}\label{eq:gfunctionN}
\gamma_z ( \boldsymbol{s}) : = \frac{1}{(2 \pi)^{2 N}}   \sum\limits_{n_1, \ldots, n_N \geq 0 } ^{\infty} U(n_1, \ldots, n_N)  
\prod_{i=1}^{N}
\big (\overline{ S_z^{2i-1}( \boldsymbol{s})  \,  \, S_z^{2i}( \boldsymbol{s}) } \big )^{n_i},
\end{equation}
 $U : \mathbb{N}_0^N \mapsto \mathbb{R}_0^+$ generalizes the weight function,
 and, as before, 
$
 \boldsymbol{\gamma}( \boldsymbol{s}) : = \prod_{z \in V}  \gamma_z(\boldsymbol{s}).
$
For each $\boldsymbol{s}  \in \Omega_s$
the Hamiltonian function, $H : \Omega_s \mapsto \mathbb{C}$, is  defined as 
$$
 H  (\boldsymbol{s})  =    
\sum\limits_{i=1}^{2N}  \sum\limits_{  \{x, y\} \in E  } 
S_{x}^i(\boldsymbol{s})  S_y^i(\boldsymbol{s}) + 
 h \sum\limits_{z \in V} \, 2 \, \cos(r s^1_z),
$$
By adapting the proof of Proposition \ref{prop:conversion}  to this setting 
we deduce that such a  spin system corresponds to a multi-occupancy double-dimer model
which is similar to the one which has been introduced in  Section \ref{sect:randomlooprepresentation}.
The only difference is that
dimers have $2N$ possible `colours', 
which we denote by
$1, \ldots, 2N$,
and,
similar to the model which was introduced in  Section \ref{sect:randomlooprepresentation},
the number of dimers of odd colour $i$ \textit{equals} the number of dimers of colour $i+1$ at each vertex and for each $i$. 
In the special case 
$$
U(n_1, n_2, \ldots n_N) =
\begin{cases}
1  & \mbox{ if  $n_1 + \ldots + n_N \in \{0,1\} $,} \\
0  & \mbox{ otherwise,} 
\end{cases}
$$
for example, one obtains a generalization of the monomer double-dimer model in which each vertex is either isolated or it belongs to a sequence of dimers whose colours are 
$2i$, $2i-1$, $2i$, $2i-1$,  $\ldots, 2i-1$ 
or $2i-1$, $2i$, $2i-1$, $2i$, $	\ldots, 2i$
for some integer $i \in \{1, \ldots, N\}$. 
Equivalently, this model can be defined 
by assigning to each configuration in
$\Omega$, the
set of monomer double-dimer configurations introduced in Section \ref{sect:monomerdoubledimerresults},  the weight 
\begin{equation}
\label{eq:probddimerN}
\forall \omega \in \Omega \quad
\mathbb{P}_{G, \rho}(\omega)  := 
\frac{\rho^{|M|} N^{ \mathcal{L}(\omega)  } }{ \mathbb{Z}_{G, N,  \rho} },
\end{equation}
in place of (\ref{eq:probddimer}),
where $ \mathcal{L}(\omega)$ is the total number of loops of at least two steps in $\omega$,
and $\mathbb{Z}_{G, N,  \rho}$ is a normalizing constant,
giving (\ref{eq:probddimer}) in the special case $N=1$.
   
   \subsection{Proof of Propositions \ref{prop:conversion} and  \ref{prop:conversionlocaltime}}
   \label{sect:proofofprop}
The proofs are based on the next simple identity, namely for each  vertex $x \in V$,  
and  integers $k_1, n_1, k_2, n_2 \in \mathbb{Z}$  we have 
\begin{equation}\label{eq:cancelation}
\frac{1}{(2 \pi)^{2 \, |V|}}\int_{\Omega_s}  \boldsymbol{ds}   \ (\overline{ S^1_x (\boldsymbol{s})})^{k_1} (S^1_x (\boldsymbol{s}))^{n_1} 
 \ (\overline{ S^2_x (\boldsymbol{s})})^{k_2} (S^2_x (\boldsymbol{s}))^{n_2}=
 \delta_{k_1, n_1} \delta_{k_2, n_2},
\end{equation}
where $\delta_{k,n}$ equals one if $k=n$ and zero otherwise.
This  follows from the definition of the spin variable (\ref{eq:spinvariable}).
By using the definition of  $\boldsymbol{\gamma}(\boldsymbol{s})$
(see equation (\ref{eq:alternation})),  we obtain from the previous identity that
for any pair of vectors $u^i = (u^i_x)_{x \in V} \in \mathbb{Z}^{V}$,
with $i \in \{1,2\}$,  the following key identity holds
\begin{equation}\label{eq:cancelation2}
\begin{split}
\int_{\Omega_s} \boldsymbol{d s}   \, 
\boldsymbol{\gamma}(\boldsymbol{s} ) \prod_{x \in V} \prod_{i=1}^{2}  (S_x^i(\boldsymbol{s}) )^{{{u}^i_x}}  
& =    \frac{1}{(2 \pi)^{2 \, |V|}}   \sum\limits_{ \boldsymbol{n} \in { \mathbb{N}_0}^{V}   } 
\int_{\Omega_s} \boldsymbol{d s} 
 \prod_{x \in V} 
 U(n_x)   {\big ( S_x^1(\boldsymbol{s}) \big )  }^{{{u}^1_x}}  {\big ( \overline{  S_x^1(\boldsymbol{s})} \big )  }^{{{n}_x}}   { \big ( S_x^2(\boldsymbol{s}) \big )  }^{{{u}^2_x}}  { \big (  \overline{S_x^2{(\boldsymbol{s})} } \big ) }^{{{n}_x}}  \\
& =  \prod_{x \in V}  U( {{u}_x^1}) \,  \,  \delta_{{{u}^1_x}, {{u}^2_x}}
\end{split} 
\end{equation}
where we extended the definition of $U$  to the negative integers by setting $U(n) = 0$ for each $n <0$
(this implies that the previous quantity equals zero as long as there exists some $x$ 
and $i \in \{1,2\}$ with  $u_x^i < 0$. 
We are now ready to prove the propositions using the previous  identity. 
Let $f : \Omega_s \mapsto \mathbb{C}$ be an arbitrary function.
Recall the definition of $G=(V,E)$ , of the extended graph $G_{en} =( V_{en}, E_{en})$,  
of the set of edges incident to the source vertex,
$E_s$,   and of the set of edges incident to the ghost vertex, $E_g$.
   We observe that the factor  $e^{  \beta    H(  \boldsymbol{s})  }$
   in (\ref{eq:measure})
   can be seen as a product of exponential functions.  
Each such exponential function is associated to an edge    of the extended graph and to a colour. 
   We start by performing a Taylor expansion for each such exponential in the product obtaining
 \begin{multline}\label{eq:starting}
   \int_{ \Omega_s }
 \boldsymbol{d s}  \,  
 \boldsymbol{\gamma}( \boldsymbol{s})  \,   f( \boldsymbol{s})
\, e^{  \beta    H(  \boldsymbol{s})  } \,   
   =        \sum\limits_{m^1 \in \mathbb{N}_0^{E} }
         \prod_{e \in E}  \frac{\beta^{m^1_e}}{m^1_e!}
     \sum\limits_{ m^2 \in \mathbb{N}_0^{E}  } 
     \prod_{e \in E}  \frac{\beta^{m^2_e}}{m^2_e!}
    \sum\limits_{ m^g \in \mathbb{N}_0^{E_g}  } 
    \prod_{e \in E_g}  \frac{(\beta h)^{m^g_e}}{m^g_e!}\\
 \times  \int_{ \Omega_s }
 \boldsymbol{d s}  \, 
 \boldsymbol{\gamma}( \boldsymbol{s}) 
f( \boldsymbol{s})
 \prod_{x\in V}\Big (  ( S_x^1(\boldsymbol{s}))^r+ (\overline{S_x^1(\boldsymbol{s})})^r \Big )^{m_{\{x,g\}}^g}
\prod_{x \in V} {(S_x^1(\boldsymbol{s}))}^{\tilde{n}_x(m^1)   } {(S_x^2(\boldsymbol{s}))}^{\tilde{n}_x(m^2)   }
   \end{multline}   
   where we  used that $2 \cos(r s_x^1 ) =  (S_x^1(\boldsymbol{s}))^r + (\overline{S_x^1(\boldsymbol{s})})^r$,
   we defined  $ {\tilde{n}_x}(m^i) = \sum_{y  \sim x } m_{\{x,y\}}^i$,
   and $y \sim x$ means that $y$ and $x$ are neighbour in the original graph $G$
   (this neighbourhood relation, then, does not consider the edges incident to the ghost  or source vertex).
In the previous sum we interpret  $m_{e}^i$ as the number of dimers of colour $i$
on the edge $e$ and $m_{\{x,g\}}$ as number of dimers
   on the edge  $\{x,g\}$ (independently from their colour).
   We now use at each vertex $x \in V$ the following identity 
    \begin{align*}
    \sum\limits_{m_{\{x,g\}} = 0}^{\infty}  \frac{(\beta h)^{  m_{\{x,g\}}  }}{m_{\{x,g\}}!} \Big (  ( S_x^1)^r+ (\overline{S_x^1})^r \Big )^{m_{\{x,g\}}}  & = 
    \sum\limits_{ m_{\{x,g\}}^1 =0 }^{  \infty   }
    \sum\limits_{ m_{\{x,g\}}^2 =0 }^{  \infty  } 
     \frac{(\beta h)^{  m_{\{x,g\}}^1  }}{m^{1}_{\{x,g\}}! }
   \frac{(\beta h)^{  m_{\{x,g\}}^2 }}{m^{2}_{\{x,g\}}! } 
   (S_x^1)^{ r \,m^1_{\{x,g\}} }   {(\overline{  S_x^1 })}^{ r \, m_{\{x,g\}}^2 } \\
    & =   \sum\limits_{ m_{\{x,g\}}^1, m_{\{x,g\}}^2 \in r \mathbb{N}_0 }
     \frac{ (\beta h)^{m_{\{x,g\}}^1 /r }}{(m^{1}_{\{x,g\}}/r) ! }
   \frac{ (\beta h)^{m_{\{x,g\}}^2 /r }}{(m^{2}_{\{x,g\}}/r)! }
   (S_x^1)^{  \,m^1_{\{x,g\}} }   {(\overline{  S_x^1 })}^{ m_{\{x,g\}}^2 }.
   \end{align*}
   and interpret $m^1_{\{x,g\}} $
   and $m^2_{\{x,g\}}$
   as the number of dimers of type respectively $1$ and $2$ on the edge $\{x,g\}$.
   Using 
 the previous identity  we then obtain that  the RHS of
   (\ref{eq:starting}) equals
 \begin{multline}\label{eq:expression1}
   \sum\limits_{  \substack{ m = (m^1, m^2) \in (\mathbb{N}_0^{  E \cup E_g })^2  : \\ m_{\{x,g\}}^1, m_{\{x,g\}}^2 \in r \mathbb{N}_0 \forall x} }
   \prod_{i=1}^{2} 
        \prod_{e \in E}  \frac{\beta^{m^i_e}}{m^i_e!}
    \prod_{e \in E_g}  \frac{(\beta h)^{ (m^{i}_e/r)}}{ (m^{i}_e/r)!}  \\ \times 
 \int_{ \Omega_s}
 \boldsymbol{d s}  \, 
 \boldsymbol{\gamma({s})} 
   f(\boldsymbol{s})
   \prod_{x \in V} {(S_x^1(\boldsymbol{s}))}^{\tilde{n}_x(m^1)   + m_{\{x,g\}}^1   - m_{\{x,g\}}^2} {(S_x^2(\boldsymbol{s}))}^{\tilde{n}_x(m^2) }
   \end{multline}   
   We are now ready to conclude the proof of the two propositions.
   For a lighter notation we will omit the dependence of $S_x^i$ on $\boldsymbol{s}$.
   \begin{proof}[Proof of Proposition  \ref{prop:conversion}]
      We let  $u^1, u^2 \in \mathbb{Z}^V$ be any two vectors. 
We set 
   $
   f = \prod_{x \in V} \prod_{i=1}^{2}  (S_x^i)^{u_x^i}.
   $
Under this choice we have that the starting expression, the LHS of (\ref{eq:starting}),  equals 
     \begin{multline}\label{eq:expression12}
  \sum\limits_{  \substack{ m = (m^1, m^2) \in (\mathbb{N}_0^{  E \cup E_g })^2  : \\ m_{\{x,g\}}^1, m_{\{x,g\}}^2 \in r \mathbb{N}_0 \forall x} }
   \prod_{i=1}^{2} 
        \prod_{e \in E}  \frac{\beta^{m^i_e}}{m^i_e!}
    \prod_{e \in E_g}  \frac{(\beta h)^{ (m^{i}_e /r)}}{ (m^{i}_e /r)!} \\ \times 
 \int_{ \Omega_s}
 \boldsymbol{d s}  \, 
 \boldsymbol{\gamma({s})} 
   \prod_{x \in V}  {(S_x^1)}^{u_x^1 + {\tilde{n}_x}(m^1) + m^1_{\{x,g\}} - m^2_{\{x,g\}}}
   {(S_x^2)}^{u_x^2+ {\tilde{n}_x}(m^2)  }
   \end{multline}
From   (\ref{eq:cancelation2})
we deduce that, for each $m = (m^1, m^2) \in \mathbb{N}_0^{  E \cup E_g }$ in the previous sum,
\begin{equation}\label{eq:keyrelation2}
\begin{split}
\int_{ \Omega_s}
 \boldsymbol{d s}  \, 
 \boldsymbol{\gamma({s})}  &
\prod_{x \in V}  {(S_x^1)}^{u_x^1 + {\tilde{n}_x}(m^1) + m^1_{\{x,g\}} - m^2_{\{x,g\}}}
   {(S_x^2)}^{u_x^2+ {\tilde{n}_x}(m^2)  }
 \\   =   
 & \prod_{x \in V} U( u_x^1 + {\tilde{n}_x}(m^1) + m^1_{\{x,g\}} - m^2_{\{x,g\}}  ) \, 
  \delta_{  u_x^1 + {\tilde{n}_x}(m^1) + m^1_{\{x,g\}} - m^2_{\{x,g\}},       u_x^2 + {\tilde{n}_x}(m^2)} \\
  \\  =   
 &  \prod_{x \in V} U( u_x^1 + {\tilde{n}_x}(m^1) + m^1_{\{x,g\}} - m^2_{\{x,g\}}  ) \, 
  \delta_{  u_x^{1,+} - u^{2,-}_x + {\tilde{n}_x}(m^1) + m^1_{\{x,g\}},      u_x^{2,+}  - u_x^{1,-} + {\tilde{n}_x}(m^2) + m^2_{\{x,g\}}},
\end{split}
  \end{equation} 
  where the components $u_x^{i, \pm}$ have been defined in the statement of the proposition. 
We see that, by (\ref{eq:keyrelation2}),
only the vectors $m = (m^1, m^2) \in \mathbb{N}_0^{ E \cup E_g }$
such that 
$u_x^{1,+} - u^{2,-}_x + {\tilde{n}_x}(m^1) + m^1_{\{x,g\}} = u_x^{2,+}  - u_x^{1,-} + {\tilde{n}_x}(m^2) + m^2_{\{x,g\}}$
at each vertex $x \in V$ are allowed to have non zero weight in the sum (\ref{eq:expression12}).
We can then associate to each such vector an element $m^\prime$ in the dimer cardinality set $\Sigma_r$,
which is defined in Definition \ref{def:dimercard},
such that ${m^\prime}^i_e = m^i_e$
for each $e \in E \cup E_g$ and each $i \in \{1,2\}$ 
and such that ${{m^\prime}^i_{\{x,s\}}}= u_x^{i,+}  - u^{i + 1 , -  }$
for each $x \in V$ and $i \in \{1,2\}$
(with the convention that \( i+1 \) is understood modulo~2, 
that is, \( i+1 = 2 \) if \( i = 1 \) and \( i+1 = 1 \) if \( i = 2 \)),
so that 
$$
U( u_x^1 + {\tilde{n}_x}(m^1) + m^1_{\{x,g\}} - m^2_{\{x,g\}} ) =
U(   u_x^{1,-} + u_x^{2,-} +   {\tilde{n}_x}({m^1}^\prime) + {m^1}^\prime_{\{x,g\}} - {m^2}^\prime_{\{x,g\}}  )
$$
Still denoting $m'$ by $m$,  the expression 
(\ref{eq:expression12}) can then be expressed as a sum over $\Sigma_r$ and equals then the following expression
  \begin{equation}\label{eq:expression2}
   \sum\limits_{  \substack{ m  \in \Sigma_r \,   :   \\  m_{\{x,s\}}^i = u_x^{i,+}  - u_x^{i + 1 , -  } \,  \, \forall x \in V, \, \,  \forall i \in \{1,2\}  }}
   \prod_{i=1}^{2} 
        \prod_{e \in E}  \frac{\beta^{m^i_e}}{m^i_e!}
    \prod_{e \in E_g}  \frac{(\beta h)^{m^{i}_e / r}}{(m^{i}_e/r)!}
 \prod_{x \in V} U(  {\tilde{n}_x}(m^1) + m^1_{\{x,g\}} - m^2_{\{x,g\}}   + u_x^{1,-}  + u_x^{2,-}  ).
   \end{equation}
This corresponds to the numerator in the RHS of (\ref{eq:correlation}) 
and concludes the proof. 
\end{proof}

\begin{proof}[Proof of Proposition  \ref{prop:conversionlocaltime}]
We   set  
$
f = \prod_{z \in A} \frac{ U(0)\, e^{- 2 h \beta \cos ( r s_z^1)} }{(2 \pi)^2 \,  \gamma_z(\boldsymbol{s})}
$
and obtain from (\ref{eq:expression1}) that the starting expression,
the LHS of (\ref{eq:starting}),  under this choice of $f$ equals 
\begin{multline}\label{eq:intermediate}
   \sum\limits_{  \substack{ m = (m^1, m^2) \in \mathbb{N}_0^{  E \cup E_g }  : \\ m^1_{\{x,g\}} = m^2_{\{x,g\}} = 0 \, \,  \forall x \in A \\ m_{\{x,g\}}^1, m_{\{x,g\}}^2 \in r \mathbb{N}_0 \, \, \forall x \in V}}
   \prod_{i=1}^{2} 
        \prod_{e \in E}  \frac{\beta^{m^i_e}}{m^i_e!}
    \prod_{e \in E_g}  \frac{(\beta h)^{m^{i}_e / r}}{(m^{i}_e/r)!}
    \\ \times 
 \int_{ \Omega_s}
 \boldsymbol{d s}  \, 
\big ( \frac{U(0)}{(2 \pi)^2} \big ) ^{|A|} 
  \prod_{z \in V \setminus A}  \, 
 {\gamma_z( \boldsymbol{s})} \, 
   \prod_{x \in V}  {(S_x^1)}^{{\tilde{n}_x}(m^1) + m^1_{\{x,g\}} - m^2_{\{x,g\}}} {(S_x^2)}^{{\tilde{n}_x}(m^2)  }
   \\ = 
     \sum\limits_{  \substack{ m = (m^1, m^2) \in \mathbb{N}_0^{  E \cup E_g }  : \\ m^1_{\{x,g\}} = m^2_{\{x,g\}} = \tilde n_x(m^1) = \tilde n_x(m^2) = 0 \, \, \forall x \in A \\ m_{\{x,g\}}^1, m_{\{x,g\}}^2 \in r \mathbb{N}_0 \, \,  \forall x \in V}}
   \prod_{i=1}^{2} 
        \prod_{e \in E}  \frac{\beta^{m^i_e}}{m^i_e!}
    \prod_{e \in E_g}  \frac{(\beta h)^{m^{i}_e / r}}{(m^{i}_e/r)!}
    \\ \times 
 \int_{ \Omega_s}
 \boldsymbol{d s}  \,  \prod_{z \in V}  \, 
 {\gamma_z( \boldsymbol{s})} \, 
   \prod_{x \in V}  {(S_x^1)}^{{\tilde{n}_x}(m^1) + m^1_{\{x,g\}} - m^2_{\{x,g\}}} {(S_x^2)}^{{\tilde{n}_x}(m^2)  }
   \\ = 
   \sum\limits_{  \substack{ m  \in \Sigma_r \,   :   \\  m^1_{\{x,g\}} = m^2_{\{x,g\}} = {\tilde{n}_x}(m^1) = {\tilde{n}_x}(m^2) =  0 \,  \, \forall x \in A    \\ 
   m^1_{\{x,s\}} = m^2_{\{x,s\}} = 0 \, \,  \forall x \in  V}}
   \prod_{i=1}^{2} 
        \prod_{e \in E}  \frac{\beta^{m^i_e}}{m^i_e!}
    \prod_{e \in E_g}  \frac{(\beta h)^{m^{i}_e / r}}{(m^{i}_e/r)!}
 \prod_{x \in V} U(  {\tilde{n}_x}(m^1) + m^1_{\{x,g\}} - m^2_{\{x,g\}}   ),
   \end{multline}
   where for the first identity we applied (\ref{eq:cancelation}) at each vertex $x \in A$ to observe that the  integral equals zero if there exists some edge $\{x,y\}$ with $x \in A$ and $y \sim x$ or $y = g$
   with $m_{\{x,y\}}^i >0$
   for some $i \in \{1,2\}$, 
    for the second step we used   (\ref{eq:cancelation2}) at each vertex $x \in V \setminus A$,  in analogy to 
   (\ref{eq:keyrelation2}),  obtaining the analogous of (\ref{eq:expression2}).
We then obtained the 
numerator in the RHS of  the first identity in Proposition \ref{prop:conversionlocaltime},
thus concluding the proof. 
   
   We now move to the proof of the second claim in the proposition by setting for $p \in \mathbb{N}_0$, 
   $
f =  \prod_{z \in A} 
    \frac{ \overline{ ( S_z^1 S_z^2 ) }^p }{(2 \pi)^2 \gamma_z(\boldsymbol{s})}.
   $
We obtain from (\ref{eq:expression1}) that, under this choice of $f$,  (\ref{eq:starting}) equals 
\begin{multline}\label{eq:expressionfinal}
  \sum\limits_{  \substack{ m = (m^1, m^2) \in \mathbb{N}_0^{  E \cup E_g }  :  \\ m_{\{x,g\}}^1, m_{\{x,g\}}^2 \in r \mathbb{N}_0 \forall x \in V}}
   \prod_{i=1}^{2} 
        \prod_{e \in E}  \frac{\beta^{m^i_e}}{m^i_e!}
    \prod_{e \in E_g}  \frac{(\beta h)^{ m^{i}_e/r}}{(m^{i}_e/r)!} \\
    \times 
 \int_{ \Omega_s}
 \boldsymbol{d s}  \,  
 \Big (  \prod_{z \in V \setminus A }  \, 
 {\gamma_z( \boldsymbol{s})} \Big ) 
  { \Big (  \frac{U(p)}{{(2 \pi)}^2} \Big ) }^{|A|}
   \prod_{x \in V  }  {(S_x^1)}^{{\tilde{n}_x}(m^1) + m^1_{\{x,g\}} - m^2_{\{x,g\}} - p \mathbbm{1}_{A}(x)} {(S_x^2)}^{{\tilde{n}_x}(m^2)  -p  \mathbbm{1}_{A}(x) } 
\end{multline}
where $\mathbbm{1}_{A}(x) =1$ if $x \in A$ and $\mathbbm{1}_{A}(x) =0$ otherwise. 
By using   (\ref{eq:cancelation2})
we deduce that, for each $m = (m^1, m^2) \in \mathbb{N}_0^{  E \cup E_g }$ in the previous sum,
\begin{multline}
\int_{ \Omega_s}
 \boldsymbol{d s}  \, 
 \Big (  \prod_{z \in V \setminus A }  \, 
 {\gamma_z( \boldsymbol{s})} \Big ) 
  { \Big ( \frac{U(p)}{(2 \pi)^2} \Big ) }^{|A|}
   \prod_{x \in V  }  {(S_x^1)}^{{\tilde{n}_x}(m^1) + m^1_{\{x,g\}} - m^2_{\{x,g\}} - p \mathbbm{1}_{A}(x)} {(S_x^2)}^{{\tilde{n}_x}(m^2)  -p  \mathbbm{1}_{A}(x) }
   \\ = 
    \prod_{x \in V \setminus A} U(  {\tilde{n}_x}(m^1) + m^1_{\{x,g\}} - m^2_{\{x,g\}}  ) \, 
  \delta_{   {\tilde{n}_x}(m^1) + m^1_{\{x,g\}},    {\tilde{n}_x}(m^2) + m^2_{\{x,g\}}} \\ 
 \times   \prod_{x \in A}  U( p ) \, \, 
  \delta_{  {\tilde{n}_x}(m^1) + m^1_{\{x,g\}} ,  p} \, \, 
  \delta_{   {\tilde{n}_x}(m^2) + m^2_{\{x,g\}}, p }.
\end{multline}
The proof now follows the same lines as the proof of Proposition \ref{prop:conversion} after replacing the previous expression in (\ref{eq:expressionfinal}).
\end{proof}

\section*{Acknowledgements} 
 L.T.  thanks Volker Betz and Benjamin Lees for sharing  discussions on a closely  related complex spin representation. 
 The authors thank Tom Spencer for illuminating comments
 and Alexandra Quitmann for producing the figure.
L.T.  thank the German Research Foundation (project number 444084038, priority program SPP2265) for financial support.  W.W was partially supported by MOST grant 2021YFA1002700,  NSFC grant 20220903 NYTP and SMEC grant 2010000080.  

\section*{Conflict of interest and data availability statement} 
The authors have no relevant financial or non-financial interests to disclose.  The manuscript has no associated data.  

\frenchspacing


\begin{thebibliography}{99}

\bibitem{AizenmanPeled}
M.~Aizenman, M.~Harel, R.~Peled, and J.~Shapiro.
\newblock Depinning in integer-restricted Gaussian fields and BKT phases of two-component spin models.
\newblock {\em arXiv preprint} arXiv:2110.09498, 2021.

\bibitem{BHS}
R.~Bauerschmidt, T.~Helmuth, and A.~Swan.
\newblock Dynkin isomorphism and Mermin--Wagner theorems for hyperbolic sigma models and recurrence of the two-dimensional vertex-reinforced jump process.
\newblock {\em Ann. Probab.}, 47:3375--3396, 2019.



\bibitem{BenassiFrohlichUeltschi2017}
C.~Benassi, J.~Fr\"ohlich, and D.~Ueltschi.
\newblock Decay of correlations in 2D quantum systems with continuous symmetry.
\newblock {\em Annales Henri Poincar\'e}, 18(9):2831--2847, 2017.
\newblock DOI: 10.1007/s00023-017-0571-4.

\bibitem{BetzUeltschi2008}
V.~Betz and D.~Ueltschi.
\newblock Spatial random permutations and Bose--Einstein condensation.
\newblock {\em Commun. Math. Phys.}, 285:469--501, 2009.

\bibitem{BetzUeltschi2011}
V.~Betz and D.~Ueltschi.
\newblock Spatial random permutations with small cycle weights.
\newblock {\em Probab. Theory Relat. Fields}, 149(1--2):191--222, 2011.

\bibitem{Betz2014}
V.~Betz.
\newblock Random permutations of a regular lattice.
\newblock {\em J. Stat. Phys.}, 155(6):1222--1248, 2014.

\bibitem{Betz2}
V.~Betz and L.~Taggi.
\newblock Scaling limit of ballistic self-avoiding walk interacting with spatial random permutations.
\newblock {\em Electron. J. Probab.}, 24:74, 37 pp., 2019.


\bibitem{vandenBerg1999}
J.~van den Berg.
\newblock On the absence of phase transition in the monomer--dimer model.
\newblock In {\em Perplexing Problems in Probability}, pages 185--195, 
Progress in Probability, vol.~44, Birkh\"auser Boston, 1999.

\bibitem{BL}
V.~Betz and B.~Lees.
\newblock (Paper in preparation, 2023+).

\bibitem{BLQ}
N.~Berestycki, M.~Lis, and W.~Qian.
\newblock Free boundary dimers: random walk representation and scaling limit.
\newblock {\em Probab. Theory Relat. Fields}, 186(3--4):735--812, 2023.

\bibitem{BrugLis}
T.~van~de Brug, F.~Camia, and M.~Lis.
\newblock Random walk loop soups and conformal loop ensembles.
\newblock {\em Probab. Theory Relat. Fields}, 166(1--2):553--584, 2016.

\bibitem{Chandgotia}
N.~Chandgotia, S.~Sheffield, and C.~Wolfram.
\newblock Large deviations for the 3D dimer model.
\newblock {\em arXiv preprint} arXiv:2304.08468, 2023.



\bibitem{DicksonVogel2024}
M.~Dickson and Q.~Vogel.
\newblock Formation of infinite loops for an interacting bosonic loop soup.
\newblock {\em Electron. J. Probab.}, 29, 2024.

\bibitem{DobrushinSchlosman}
R.~L. Dobrushin and S.~B. Shlosman.
\newblock Absence of breakdown of continuous symmetry in two-dimensional models of statistical physics.
\newblock {\em Commun. Math. Phys.}, 42:31--40, 1975.

\bibitem{Dub}
J.~Dub\'edat.
\newblock Dimers and families of Cauchy--Riemann operators I.
\newblock {\em J. Amer. Math. Soc.}, 28(4):1063--1167, 2015.

\bibitem{Dubedat}
J.~Dub\'edat.
\newblock Double dimers, conformal loop ensembles and isomonodromic deformations.
\newblock {\em J. Eur. Math. Soc.}, 21(1):1--54, 2019.


   

\bibitem{FendleyMoessnerSondhi2002}
P.~Fendley, R.~Moessner, and S.~L. Sondhi.
\newblock Classical dimers on the triangular lattice.
\newblock {\em Phys. Rev. B}, 66:214513, 2002.



\bibitem{FisherStephenson1963}
M.~E. Fisher and J.~Stephenson.
\newblock Statistical mechanics of dimers on a plane lattice. II. Dimer correlations and monomers.
\newblock {\em Phys. Rev.}, 132(4):1411--1431, 1963.

\bibitem{FrohlichSpencer}
J.~Fr\"ohlich and T.~Spencer.
\newblock The Kosterlitz--Thouless transition in two-dimensional Abelian spin systems and the Coulomb gas.
\newblock {\em Commun. Math. Phys.}, 81:527--602, 1981.

\bibitem{GagnebinandVelenik}
M.~Gagnebin and Y.~Velenik.
\newblock Upper bound on the decay of correlations in a general class of symmetric models.
\newblock {\em Commun. Math. Phys.}, 332:1235--1255, 2014.

\bibitem{GJL}
A.~Giuliani, I.~Jauslin, and E.~Lieb.
\newblock A Pfaffian formula for monomer--dimer partition functions.
\newblock {\em J. Stat. Phys.}, 163(2):211--238, 2016.

\bibitem{GiulianiToninelli}
A.~Giuliani, B.~Renzi, and F.~Toninelli.
\newblock Weakly non-planar dimers.
\newblock {\em Probab. Math. Phys.}, 4(4):891--934, 2023.

\bibitem{Glazman2}
A.~Glazman and I.~Manolescu.
\newblock Exponential decay in the loop $O(n)$ model on the hexagonal lattice for $n > 1$ and $x < \tfrac{1}{\sqrt{3}} + \varepsilon(n)$.
\newblock In {\em In and out of equilibrium 3. Celebrating Vladas Sidoravicius}, vol. 77 of {\em Progress in Probability}, pp.~455--470. Birkh\"auser/Springer, Cham, 2021.

\bibitem{Grosskinsky2013}
S.~Grosskinsky, A.~Lovisolo, and D.~Ueltschi.
\newblock Lattice permutations and Poisson--Dirichlet distribution of cycle lengths.
\newblock {\em J. Stat. Phys.}, 146:110--136, 2012.

\bibitem{HartarskyLichevToninelli}
I.~Hartarsky, L.~Lichev, and F.~L. Toninelli.
\newblock Local dimer dynamics in higher dimensions.
\newblock {\em Ann. Inst. Henri Poincar\'e D}, to appear (online first August 2024). DOI:10.4171/AIHPD/200.

   
   \bibitem{HeilmannLieb1972}
O.~J. Heilmann and E.~H. Lieb.
\newblock Theory of monomer--dimer systems.
\newblock {\em Commun. Math. Phys.}, 25(3):190--232, 1972.


\bibitem{Jerrum}
M.~R. Jerrum.
\newblock Two-dimensional monomer--dimer systems are computationally intractable.
\newblock {\em J. Stat. Phys.}, 48:121--134, 1987.

\bibitem{Kasteleyn}
P.~W. Kasteleyn.
\newblock The statistics of dimers on a lattice. I. The number of dimer arrangements on a quadratic lattice.
\newblock {\em Physica}, 27:1209--1225, 1961.

\bibitem{Kenyonclaire}
C.~Kenyon, D.~Randall, and A.~Sinclair.
\newblock Approximating the number of monomer--dimer coverings of a lattice.
\newblock {\em J. Stat. Phys.}, 83:637--659, 1996.

\bibitem{Kenyondoubledimer}
R.~Kenyon.
\newblock Conformal invariance of loops in the double-dimer model.
\newblock {\em Commun. Math. Phys.}, 326:477--497, 2014.

\bibitem{KenyonWolfram}
R.~Kenyon and C.~Wolfram.
\newblock The multinomial dimer model.
\newblock {\em arXiv preprint} arXiv:2506.12171, 2025.

\bibitem{KP}
G. Kozma and R. Peled.
\newblock Power-law decay of weights and recurrence of the two-dimensional VRJP.
\newblock {\em Electron. J. Probab. } {\bf 26}, Paper No. 82, 2021.

\bibitem{LeesTaggiCMP2020}
B.~Lees and L.~Taggi.
\newblock Site monotonicity and uniform positivity for interacting random walks and the spin $O(N)$ model with arbitrary $N$.
\newblock {\em Commun. Math. Phys.}, 376:487--520, 2020.

\bibitem{LeesTaggiPTRF2021}
B.~Lees and L.~Taggi.
\newblock Exponential decay of transverse correlations for $O(N)$ spin systems and related models.
\newblock {\em Probab. Theory Relat. Fields}, 180(3--4):1099--1133, 2021.

\bibitem{LeesTaggiJSP2021}
B.~Lees and L.~Taggi.
\newblock Site monotonicity properties for reflection positive measures with applications to quantum spin systems.
\newblock {\em J. Stat. Phys.}, 183:1--28, 2021.

\bibitem{Lis}
D.~van Engelenburg and M.~Lis.
\newblock An elementary proof of phase transition in the planar XY model.
\newblock {\em Commun. Math. Phys.}, 399(1):85--104, 2023.

\bibitem{Loop1}
H.~Duminil-Copin, A.~Glazman, R.~Peled, and Y.~Spinka.
\newblock Macroscopic loops in the loop O(N) model at Nienhuis' critical point.
\newblock {\em J. Eur. Math. Soc.}, 23:315--347, 2021.

\bibitem{McBryanSpencer}
O.~A. McBryan and T.~Spencer.
\newblock On the decay of correlations in ${\rm SO}(n)$-symmetric ferromagnets.
\newblock {\em Commun. Math. Phys.}, 53(3):299--302, 1977.

\bibitem{Mermin}
N.~D. Mermin.
\newblock Absence of ordering in certain classical systems.
\newblock {\em J. Math. Phys.}, 8(5):1061--1064, 1967.

\bibitem{MerminWagner}
N.~D. Mermin and H.~Wagner.
\newblock Absence of ferromagnetism or antiferromagnetism in one- or two-dimensional isotropic Heisenberg models.
\newblock {\em Phys. Rev. Lett.}, 17:1133--1136, 1966.

\bibitem{PeledMilos}
R.~Peled and P.Mi\l o\'s
\newblock Delocalization of two-dimensional random surfaces with
              hard-core constraints
\newblock {\em Commun. Math. Phys.}, 340(1):1--46, 2015.

\bibitem{PeledSpinka}
R.~Peled and Y.~Spinka.
\newblock Lectures on the spin and loop $O(n)$ models.
\newblock In {\em Sojourns in Probability Theory and Statistical Physics I}, Springer Proc. Math. Stat., vol. 298, pp.~246--320. Springer, Singapore, 2019.

\bibitem{Pfister}
C.E.Pfister
\newblock On the symmetry of the Gibbs states in two-dimensional lattice systems.
\newblock {\em Commun. Math. Phys.}, 79:181--188, 1981.

\bibitem{PriezzhevRuelle2008}
V.~B. Priezzhev and P.~Ruelle.
\newblock Boundary monomers in the dimer model.
\newblock {\em Phys. Rev. E}, 77:061126, 2008.

\bibitem{Quitmann2023}
A.~Quitmann.
\newblock Decay of correlations in the monomer-dimer model.
\newblock {\em J. Math. Phys.}, 65(10):103502, 2024.
\newblock arXiv:2309.13632,  2023.


\bibitem{QuitmannTaggi}
A.~Quitmann and L.~Taggi.
\newblock Macroscopic loops in the Bose gas, spin $O(N)$ and related models.
\newblock {\em Commun. Math. Phys.}, 400(3):2081--2136, 2023.

\bibitem{QuitmannTaggi2}
A.~Quitmann and L.~Taggi.
\newblock Macroscopic loops in the 3-D double-dimer model.
\newblock {\em Electron. Commun. Probab.}, 28:31, 1--12, 2023.

\bibitem{Sab}
C. Sabot, 
\newblock Polynomial localization of the 2D-vertex reinforced jump process.
\newblock {\em Electron. Commun. Probab. } 
{\bf 26}  Paper No. 1,  2021.

\bibitem{Suto2009}
A.~S\"ut\H{o}.
\newblock Percolation transition in the Bose gas: II.
\newblock {\em J. Stat. Phys.}, 136:525--535, 2009.

\bibitem{T}
L.~Taggi.
\newblock Uniformly positive correlations in the dimer model and macroscopic interacting self-avoiding walk in $\mathbb{Z}^d$, $d \ge 3$.
\newblock {\em Commun. Pure Appl. Math.}, 75(6):1183--1236, 2022.

\bibitem{TaggiShifted}
L.~Taggi.
\newblock Shifted critical threshold in the loop $O(n)$ model at arbitrarily small $n$.
\newblock {\em Electron. Commun. Probab.}, 23:96, 9 pp., 2018.

\bibitem{Temperley}
H.~N.~V. Temperley and M.~E. Fisher.
\newblock Dimer problem in statistical mechanics -- an exact result.
\newblock {\em Philos. Mag.}, Series 8, 6:1061--1063, 1961.

\bibitem{Ueltschi2011}
D.~Ueltschi.
\newblock Random loop representations for quantum spin systems.
\newblock {\em J. Math. Phys.}, 52(8):083301, 2011.

\bibitem{Ueltschi2012}
D.~Ueltschi.
\newblock Feynman cycles in the Bose gas.
\newblock {\em J. Math. Phys.}, 53(9):095221, 2012.

\bibitem{Velenik}
S.~Friedli and Y.~Velenik.
\newblock {\em Statistical Mechanics of Lattice Systems: A Concrete Mathematical Introduction}.
\newblock Cambridge University Press, 2017.



\bibitem{WilkinsPowell2021}
J.~M. Wilkins and B.~J. Powell.
\newblock Topological sectors, dimer correlations, and monomers from the transfer-matrix solution of the dimer model.
\newblock {\em Phys. Rev. E}, 104:014145, 2021.

\end{thebibliography}
\end{document}